\theoremstyle{plain}
    \newtheorem{theorem}{Theorem}[section]
    \newtheorem*{theorem*}{Theorem}
    \newtheorem{lemma}[theorem]{Lemma}
    \newtheorem{proposition}[theorem]{Proposition}
    \newtheorem{corollary}[theorem]{Corollary}
\theoremstyle{definition}
    \newtheorem{definition}{Definition}[section]
    \newtheorem{remark}{Remark}[section]
\numberwithin{equation}{section}
\renewcommand{\r}{\right}
\DeclareMathOperator{\Span}{span}
\begin{document}

\title[]{Long-time asymptotics of the damped nonlinear Klein-Gordon equation with a delta potential}
\author[K. Ishizuka]{Kenjiro Ishizuka}
\address[K.Ishizuka]{Research Institute for Mathematical Sciences, Kyoto University, Kyoto 606-8502, JAPAN}
\email{ishizuka@kurims.kyoto-u.ac.jp}
\date{\today}
\date{\today}
\keywords{Nonlinear Klein-Gordon equation, Solitons, Soliton resolution, Dirac delta potential. }
\maketitle

\begin{abstract}
We consider the damped nonlinear Klein-Gordon equation with a delta potential
\begin{align*}
\partial_{t}^2u-\partial_{x}^2u+2\alpha \partial_{t}u+u-\gamma {\delta}_0u-|u|^{p-1}u=0, \ & (t,x) \in \mathbb{R} \times \mathbb{R},
\end{align*}
where $p>2$, $\alpha>0,\ \gamma<2$, and $\delta_0=\delta_0 (x)$ denotes the Dirac delta with the mass at the origin. When $\gamma=0$,  C\^{o}te, Martel and Yuan \cite{CMY} proved that any global solution either converges to 0 or to the sum of $K\geq 1$ decoupled solitary waves which have alternative signs. In this paper, we first prove that any global solution either converges to 0 or to the sum of $K\geq 1$ decoupled solitary waves. Next we construct a single solitary wave solution that moves away from the origin when $\gamma<0$ and construct an even 2-solitary wave solution when $\gamma\leq -2$. Last we give single solitary wave solutions and even 2-solitary wave solutions an upper bound for the distance between the origin and the solitary wave.
\end{abstract}

\tableofcontents

\section{Introduction}
\subsection{Setting of the problem}
We consider the following damped nonlinear Klein-Gordon equation with a delta potential
\begin{align}
\label{DNKG}
\tag{DNKG}
	\left\{
	\begin{aligned}
		&\partial_{t}^2u-\partial_{x}^2u+2\alpha \partial_{t}u+u-\gamma {\delta}_0u-f(u)=0, & (t,x) \in \mathbb{R} \times \mathbb{R},
		\\
		&u(0,x)=u_{0}(x), &x \in \mathbb{R},
	\end{aligned}
	\r.
\end{align}
where $f(u)=|u|^{p-1}u$, $p>2$, $\alpha>0,\ \gamma<2$, and $\delta_0=\delta_0 (x)$ denotes the Dirac delta with the mass at the origin. 
We define the operator $\mathcal{A}$ as 
\begin{align*}
	\mathcal{A}=\begin{pmatrix}
0 & 1  \\
{\partial}_{x}^2-1 & -2\alpha   \\
\end{pmatrix}
,
\end{align*}
with domain 
\begin{align}\label{Adomain}
 \mathcal{D}=\{ (u,v) \in H^{2,\ast}\times H^1 : u^{\prime}(0^+)-u^{\prime}(0^-)=-\gamma u(0)\} \subset \mathcal{H},
\end{align}
where 
\begin{align*}
H^{2,\ast}&=H^1(\mathbb{R})\cap H^2(\mathbb{R}\setminus \{ 0\}),
 \\
\mathcal{H}&=H^1(\mathbb{R})\times L^2(\mathbb{R}).
\end{align*}
$\mathcal{A}$ generates a strongly continuous semigroup on $\mathcal{H}$, which allows us to establish the local well-posedness of \eqref{DNKG}. We will show it in Section 2. 
Moreover defining the energy of a solution $\vec{u}=(u, {\partial}_tu)$ by
\begin{align*}
	E_{\gamma}(\vec{u})=\frac{1}{2}\left(\| u\|_{H^1}^2+\|{\partial}_tu\|_{L^2}^2-\gamma|u(0)|^2\right)-\frac{1}{p+1}\| u\|_{L^{p+1}}^{p+1},
\end{align*}
it holds 
\begin{align}\label{energydecay}
	E_{\gamma}(\vec{u}(t_2))-E_{\gamma}(\vec{u}(t_1))=-2\alpha \int_{t_1}^{t_2} \| {\partial}_tu(t)\|_{L^2}^2dt.
\end{align}

Furthermore the equation \eqref{DNKG} enjoys several invariances:
\begin{align}\label{invariance}
\begin{aligned}
u(t,x)&\mapsto -u(t,x),
\\
u(t,x)&\mapsto u(t,-x).
\end{aligned}
\end{align}

When $\gamma=0$, it is well-known that up to sign and translation, the only stationary solution of \eqref{DNKG}  is $(Q,0)$, where $Q$ is 
\begin{align*}
	Q(x)=\left(\frac{p+1}{2\cosh^{2}\big(\frac{p-1}{2}x\big)}\right)^{\frac{1}{p-1}},
\end{align*}
which solves the equation 
\begin{align*}
	Q^{\prime \prime }-Q+Q^p=0 \ \ \mbox{on} \ \ \mathbb{R}
\end{align*}
(see \cite{K2}). From the expression of $Q$ , we obtain that as $|x|\to \infty$,
\begin{align}\label{Qes}
\left|Q(x)-c_Qe^{-|x|}\right|\lesssim e^{-2|x|},\ \left|Q^{\prime}(x)+\frac{x}{|x|}c_Qe^{-|x|}\right|\lesssim e^{-2|x|},
\end{align}
where $c_Q=(2p+2)^{\frac{1}{p-1}}$. When $\gamma=0$,  it is proved in \cite{CMY} that any global solution either converges to 0  or to the sum of $K\geq 1$ decoupled solitary waves which have alternative signs, namely for any global solution $\vec{u} \in C([0,\infty),\mathcal{H})$ of \eqref{DNKG} there exist $K\geq0,  \sigma=\pm 1$ and 
functions $z_k:[0,\infty)\to \mathbb{R}$\ for all $k=1,\cdots, K$ such that  
\begin{align}\label{dnkgsr}
\begin{aligned}
	&\lim_{t\to \infty} \left(\| u(t)-\sigma \sum_{k=1}^K(-1)^kQ(\cdot -z_k(t))\|_{H^1}+\| {\partial}_tu(t)
	\|_{L^2}\right)=0,
	\\
	&\lim_{t\to \infty} \left(z_{k+1}(t)-z_k(t)\right)=\infty\ for\ k=1,2,\cdots,K-1 .
\end{aligned}
\end{align}

\begin{remark}
If $K=0$ then the terms $\sum_{k=1}^K$ are $0$.
\end{remark}

On the other hand, when $\gamma \neq 0$, there may exist a solution of \eqref{DNKG} whose asymptotic behavior is different from the $\gamma=0$ case thanks to the potential-soliton repulsion(or attraction).
In fact, when $\gamma$ satisfies $|\gamma|<2$ and $\gamma\neq0$, the positive stationary solution of \eqref{DNKG} is $(Q_{\gamma},0)$, where $Q_{\gamma}$ is 
\begin{align*}
	Q_{\gamma}(x)=\left( \frac{p+1}{2{\cosh}^2\left( \frac{p-1}{2}|x|+{\tanh}^{-1}(\frac{\gamma}{2} )\right) }\right)^{\frac{1}{p-1}},
\end{align*}
which is the solution of 
\begin{align*}
Q_{\gamma}^{\prime \prime}-Q_{\gamma}+\gamma{\delta}_0Q_{\gamma}+Q_{\gamma}^p=0.
\end{align*}

Its asymptotic behavior is different from \eqref{dnkgsr}.

\subsection{Main results}

We are interested in the long-time asymptotic behavior of global solutions of \eqref{DNKG}. When $\gamma \neq 0$, it is difficult to describe precisely the asymptotic behavior as $t\to \infty$ of all global solutions of \eqref{DNKG} as \cite{CMY} because the potential-soliton repulsion (or attraction) is delicate.  However by using the concentration-compactness arguments we obtain the soliton resolution as the following:

\begin{theorem}\label{sr}
For any global solution $\vec{u}$ of \eqref{DNKG}, there exist $K\geq 0$, signs $\sigma=0,\pm1$, $\sigma_k=\pm 1$ for any $k\in \{ 1,2,\cdots,K\}$, and functions $z_k:[0,\infty)\to \mathbb{R}$ for any $k\in \{ 1,2,\cdots,K\}$ such that   
\begin{align}\label{sr-1}
\begin{aligned}
\lim_{t\to \infty} &\left(\|u(t)-\sigma Q_{\gamma}-\sum_{k=1}^K{\sigma}_kQ(\cdot-z_k(t))\|_{H^1}+\|{\partial}_tu(t)\|_{L^2}\right)=0,
\\
\lim_{t\to \infty}&(z_{k+1}(t)-z_k(t))=\infty\ \mbox{for}\ k=1,\cdots,K-1,
\\
\lim_{t\to \infty}&|z_k(t)|=\infty\ \mbox{for}\ k=1,\cdots,K.
\end{aligned}
\end{align}
\end{theorem}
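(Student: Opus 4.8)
The plan is to follow the concentration–compactness and modulation strategy of \cite{CMY}, adapting it to account for the delta potential, which is the source of the new stationary state $(Q_\gamma,0)$ appearing in \eqref{sr-1}. The backbone of the argument is that, thanks to the energy identity \eqref{energydecay}, $E_\gamma(\vec u(t))$ is nonincreasing and bounded below (this requires first showing boundedness of $\|\vec u(t)\|_{\mathcal H}$ for global solutions, via the coercivity of the energy below the ground-state threshold, or more robustly via a virial/Lyapunov argument), so $\int_0^\infty\|\partial_t u(t)\|_{L^2}^2\,dt<\infty$. Hence along a sequence $t_n\to\infty$ one has $\|\partial_t u(t_n)\|_{L^2}\to 0$; the goal is to identify the weak limits of $u(t_n)$ and to upgrade subsequential convergence to genuine convergence as $t\to\infty$.

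\textbf{Step 1: profile decomposition of a well-chosen sequence.} First I would extract, for a sequence $t_n\to\infty$ along which $\|\partial_t u(t_n)\|_{L^2}\to 0$, a bubble decomposition
\[
u(t_n)=\sum_{j=1}^{J}V_j(\cdot - x_n^j)+r_n^J,
\]
where each nonzero profile $V_j$ is, by the vanishing of the time derivative and the elliptic equation it must satisfy in the limit, either a translate of $\pm Q$ (when the cores $x_n^j$ escape to infinity, so the delta potential is invisible in the limit) or equal to $\pm Q_\gamma$ (when $x_n^j$ stays bounded, and then the limit equation is $V''-V+\gamma\delta_0 V+|V|^{p-1}V=0$, whose only nonzero $H^1$ solutions up to sign are $\pm Q_\gamma$ — this uses $|\gamma|<2$; note also that by the repulsion/attraction one must allow at most one bounded core). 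The remainder $r_n^J$ must go to $0$ in $H^1$ as $J\to\infty$ because the energy is exactly additive in the limit (Pythagorean expansion of $E_\gamma$ over the bubbles plus remainder) and there is no energy left to carry a nontrivial profile with vanishing kinetic part. This yields the decomposition \eqref{sr-1} along the subsequence, with $\sigma\in\{0,\pm1\}$ recording whether a $Q_\gamma$-bubble is present.

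\textbf{Step 2: from subsequential to full convergence via modulation and a no-return argument.} Here I would set up modulation parameters: positions $z_k(t)$ for the moving $Q$-bubbles and, if present, a (small) modulation of the $Q_\gamma$-bubble, together with the orthogonality conditions killing the kernel directions of the linearized operators. Differentiating the orthogonality relations and using \eqref{energydecay} gives differential inequalities controlling $\dot z_k$, the inter-distances $z_{k+1}-z_k$, the distances $|z_k|$ from the origin, and the modulation error; combined with $\int^\infty\|\partial_t u\|_{L^2}^2<\infty$ and the sign alternation / repulsion forcing the bubbles apart (so the interaction terms are integrable in time), one shows these quantities converge and the error tends to $0$, so the whole trajectory — not just a subsequence — satisfies \eqref{sr-1}. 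The signs $\sigma_k$ need no longer alternate because the delta potential breaks the mechanism that in \cite{CMY} forced alternation near the origin; one only needs that distinct bubbles are asymptotically decoupled, which the modulation estimates deliver.

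\textbf{Main obstacle.} The delicate point, as the authors themselves flag, is the behavior of a bubble whose core $z_k(t)$ neither stays bounded nor clearly escapes — i.e.\ ruling out a bubble that lingers at a slowly growing distance from the origin in a way incompatible with $\lim|z_k(t)|=\infty$, or conversely a would-be $Q_\gamma$-component that slowly drifts. Controlling the potential–soliton interaction (which decays only exponentially in $|z_k|$, and whose sign depends on $\gamma$) well enough to close the modulation estimates and force the trichotomy in \eqref{sr-1} is the crux; this is where one must use the energy dissipation \eqref{energydecay} quantitatively rather than just qualitatively, together with a careful ejection/absorption dichotomy for the innermost bubble.
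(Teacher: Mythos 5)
Your Step 1 is essentially the paper's Proposition \ref{ssr}: a Lions-type profile extraction in which escaping cores produce $\pm Q$ and a bounded core produces $\pm Q_{\gamma}$ (for $|\gamma|<2$; for $\gamma\le-2$ no bounded profile survives), with termination because each extracted bubble removes a fixed quantum of $\|\cdot\|_{L^{p+1}}^{p+1}$. Two points, however, separate your proposal from an actual proof.

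First, a technical but essential omission: you only obtain $\|\partial_t u(t_n)\|_{L^2}\to 0$ along a sequence, from $\int_0^\infty\|\partial_t u\|_{L^2}^2\,dt<\infty$. The theorem asserts the full limit, and — more importantly — any mechanism for upgrading subsequential to full convergence needs the hypotheses of the compactness proposition (boundedness plus vanishing of the elliptic residual $-\partial_x^2u+u-\gamma\delta_0 u-f(u)$ in $H^{-1}$) to hold along \emph{arbitrary} sequences $t_n\to\infty$. The paper gets this by writing the Duhamel formula for $\vec v=(\partial_t u,\partial_t^2 u)$, which solves the linearized damped equation with source $|u|^{p-1}\partial_tu$, and using the exponential decay of $e^{t\mathcal A}$ on $L^2\times H^{-1}$ together with $\partial_tu\in L^2_{t,x}$ and $u\in L^\infty_tH^1$. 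This step is absent from your outline and cannot be skipped.

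Second, and this is the genuine gap: your Step 2 is not the paper's argument and, as you yourself concede in your ``Main obstacle'' paragraph, it is not closed. You propose modulation equations and differential inequalities forcing convergence of the positions $z_k(t)$; but controlling the potential--soliton interaction well enough to do this is precisely what the authors say they \emph{cannot} do in general (it is why the signs $\sigma_k$ are left undetermined, and why Theorems \ref{existencesoliton} and \ref{centerdistance} require separate, much more delicate arguments in special cases). The paper's proof of Theorem \ref{sr} requires no quantitative control of the bubble dynamics at all. It is a soft no-return argument: for $R$ large and $\epsilon$ small the neighborhoods $\mathcal N_{K,R,\varsigma,\sigma,\epsilon}$ of the distinct asymptotic configurations are pairwise disjoint; if $u(t)$ accumulated on two different configurations along two time sequences, then by continuity of $t\mapsto u(t)$ in $H^1$ there would be a third sequence $s_n\to\infty$ with $u(s_n)$ lying outside \emph{every} such neighborhood, and applying Proposition \ref{ssr} to $(u(s_n))_n$ gives a contradiction. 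Hence the configuration $(\tilde K,\tilde\varsigma,\tilde\sigma)$ is unique and $d_{\tilde K,R,\tilde\varsigma,\tilde\sigma}(u(t))\to 0$ in the full limit; the functions $z_k(t)$ are then simply defined as minimizers of the distance to the configuration manifold. You should replace your Step 2 by this argument; as written, your proposal defers the entire difficulty to an ``ejection/absorption dichotomy'' that is neither needed nor supplied.
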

\begin{remark}
When $\sigma=0$ and $K=0$, $\vec{u}$ converges exponentially to $0$ in $\mathcal{H}$ as $t\to \infty$.  We prove this in Section 4.2.
\end{remark}

\begin{remark}
If $\gamma\leq -2$, we may assume $\sigma=0$ because $Q_{\gamma}$ does not exist. 
\end{remark}
Theorem \ref{sr} implies that \eqref{sr-1} exhausts all solutions in the energy space. On the other hand, it is difficult to determine signs $(\sigma_k)_{1\leq k\leq K}$ of \eqref{sr-1}. For example, we do not know whether there exists a global solution $\vec{u}$ of \eqref{DNKG} satisfying \eqref{sr-1} with $K=3$ and $(\sigma,\sigma_1,\sigma_2,\sigma_3)=(0,1,1,-1)$. This is due to the potential-soliton repulsion (attraction). When $\gamma=0$, we only consider the interaction between solitary waves and \eqref{dnkgsr} exhausts all solutions. On the other hand, when $\gamma\neq 0$ we need to reveal whether the interaction between solitary waves is stronger than the potential-soliton repulsion (attraction). However, the potential-soliton repulsion (attraction) is delicate and it prevents from determining signs of solitary waves.

\begin{remark}
By \cite{CMY} and \eqref{Adomain}, for any $K\in \mathbb{N}$ and $\sigma=\pm1$ there exist functions $z_k:[0,\infty)\to \mathbb{R}$ for $k=1,2,\cdots,K$ and an odd global solution $\vec{u} \in C([0,\infty),\mathcal{H})$ of \eqref{DNKG} such that
\begin{align*}
&\lim_{t\to \infty} \left(\| u(t)-\sigma \sum_{k=1}^{K}\left\{Q(\cdot-z_k(t))-Q(\cdot+z_k(t))\right\}\|_{H^1}+\|{\partial}_tu(t)\|_{L^2}\right)=0,
\\
&\lim_{t\to\infty}z_k(t)=\infty\ for\ k=1,2,\cdots,K,
\\
& \lim_{t\to \infty} (z_{k+1}(t)-z_k(t))=\infty\ for\ k=1,2,\cdots,K-1.
\end{align*}
\end{remark}

As above, it is difficult to determine signs $\sigma$ and $(\sigma_k)_{1\leq k\leq K}$ of \eqref{sr-1}. On the other hand, when \eqref{sr-1} with $\sigma=0$, $K=1$ and $\gamma<0$ or \eqref{sr-1} with $\sigma=0$, $K=2$ and $\gamma\leq -2$, thanks to the structure of the ground state energy we can construct a global solution whose asymptotic behavior is different from \eqref{dnkgsr}.

\begin{theorem}\label{existencesoliton}
\begin{enumerate}
\item Let $\gamma<0$. For $\sigma=\pm1$, there exists a global solution $\vec{u}\in C([0,\infty); \mathcal{H})$ of \eqref{DNKG} such that
\begin{align}
\label{1soli}
\begin{aligned}
\lim_{t\to \infty} \left(\|u(t)-\sigma Q(\cdot-z(t))\|_{H^1}+\|{\partial}_tu(t)\|_{L^2}\right)&=0,
\\
\lim_{t\to\infty}|z(t)|&=\infty,
\end{aligned}
\end{align}
for some $z:[0,\infty)\to \mathbb{R}$.
\item Let $\gamma\leq -2$. For $\sigma=\pm1$, there exists an even global solution $\vec{u}\in C([0,\infty); \mathcal{H})$ of \eqref{DNKG} such that
\begin{align}
\label{2soli}
\begin{aligned}
\lim_{t\to \infty} \left(\|u(t)-\sigma(Q(\cdot-z(t))+Q(\cdot+z(t)))\|_{H^1}+\|{\partial}_tu(t)\|_{L^2}\right)&=0,
\\
\lim_{t\to\infty}z(t)&=\infty,
\end{aligned}
\end{align}
for some $z:[0,\infty)\to \mathbb{R}$.
\end{enumerate}
\end{theorem}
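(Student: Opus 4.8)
The plan is to realize both solutions as \emph{modulated solitons} whose center $z(t)$ obeys an effective damped Newtonian ODE with a repulsive forcing, and then to upgrade this picture to a genuine global solution via a bootstrap estimate on the remainder combined with a topological selection of the initial datum. The sign $\sigma=\pm1$ will come for free from the symmetry $u\mapsto -u$ in \eqref{invariance}, and in (1) the alternative $z(t)\to-\infty$ from $u\mapsto u(\cdot,-\cdot)$; for (2) I would work throughout in the subspace of even functions, which is preserved by \eqref{DNKG} because of \eqref{invariance} and the form of $\mathcal D$ in \eqref{Adomain}, so that evenness is automatic.

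First I would decompose $\vec u(t)=\vec R_{z(t)}+\vec\varepsilon(t)$ with $R_z=\sigma Q(\cdot-z)$ in case (1) and $R_z=\sigma(Q(\cdot-z)+Q(\cdot+z))$ in case (2) — note that in the latter $\partial_z R_z$ is still an even function, so imposing the orthogonality conditions fixing $z(t)$ and $\dot z(t)$ makes sense inside the even space. Substituting into the equation and using the tail bounds \eqref{Qes}, one gets a modulation equation of the schematic form $\ddot z+2\alpha\dot z=\mathcal F_\gamma(z)+\mathcal N$, where $\mathcal N$ is quadratically small in $(\dot z,\vec\varepsilon)$ together with self-interaction terms between the soliton and the delta of size $O(e^{-pz})$; here the hypothesis $p>2$ enters, making these negligible against the linear force $\mathcal F_\gamma(z)\sim e^{-2z}$. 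In case (1) one finds $\mathcal F_\gamma(z)=-\gamma c_Q^2e^{-2z}(1+o(1))>0$ precisely because $\gamma<0$ (the delta is repulsive). In case (2), $\mathcal F_\gamma(z)$ is the difference of the (positive) repulsion exerted by the potential on the symmetric pair and the (positive) mutual attraction between the two equal-sign solitons, both of order $e^{-2z}$; the net force is repulsive exactly when $|\gamma|$ is above a sharp constant, and — reading off the explicit profile $Q_\gamma$ and the structure of the ground-state energy, equivalently the fact that the bound state disappears exactly at $|\gamma|=2$ (its humps run off to infinity as $\gamma\to-2^-$) — this constant is $2$, i.e. $\gamma\le-2$. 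In both cases $\ddot z+2\alpha\dot z\sim e^{-2z}$ leads formally to $z(t)=\tfrac12\log t+O(1)$ and $\dot z(t)\sim t^{-1}$, so $z(t)\to\infty$ and $\dot z(t)\to0$: the profile to be realized.

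To construct an actual solution I would argue by a bootstrap-and-shooting scheme. Fix a large position $a$ and a large time $T_n$, and take the one-parameter family of data $\vec u^{(b)}(0)=\vec R_a+b\,\vec Y_a$, where $\vec Y_a$ spans (an approximation of) the unique eigendirection with positive real part of the operator $\bigl(\begin{smallmatrix}0&1\\-L_+&-2\alpha\end{smallmatrix}\bigr)$ linearized about $\vec R_a$ — recall that $L_+=-\partial_x^2+1-pQ^{p-1}$, or its even part in case (2), has exactly one negative eigenvalue, which the damping turns into exactly one unstable eigenvalue. On the maximal time interval where the bootstrap assumptions hold — $\|\vec\varepsilon(t)\|_{\mathcal H}\le\rho(t)$ with $\rho(t)\sim t^{-1}$, $|\dot z(t)|+e^{-2z(t)}$ small, and the unstable-mode coordinate $|a_+(t)|\le\rho(t)$ — one closes every estimate except that on $a_+$ by combining the energy-dissipation identity \eqref{energydecay} with a localized functional of virial type adapted to the moving soliton and the coercivity of $L_+$ away from its kernel and its unstable mode; the coordinate $a_+$ obeys an unstable scalar ODE whose forcing is under control, and an intermediate-value (Brouwer) argument over $b$ produces $b=b_n$ for which the bootstrap holds on all of $[0,T_n]$. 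Since $\vec u^{(b_n)}(0)$ stays bounded in $\mathcal H$, a weak limit as $T_n\to\infty$ gives a solution satisfying the bootstrap on $[0,\infty)$ — hence global — and obeying \eqref{1soli}, resp. \eqref{2soli}.

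The hard part will be closing the bootstrap at the borderline rate $\rho(t)\sim t^{-1}$: the forcing $e^{-2z}\sim t^{-1}$ is only just time-integrable against the damped semigroup, so the virial and energy estimates must be done sharply and the modulation parameters tuned precisely, with no slack to spend on errors. This is intertwined with the control of the unstable direction, because the coercivity that controls $\vec\varepsilon$ is available only after projecting off both the kernel (handled by the orthogonality defining $z$) and the unstable mode, and the shooting argument killing the latter must be compatible with the exhausting intervals $[0,T_n]$ and survive the limit. For part (2) there is the further, genuinely quantitative point of checking that the potential repulsion overtakes the soliton-soliton attraction exactly at $\gamma=-2$; this is precisely where the ground-state-energy structure is essential, since for $-2<\gamma<0$ the pair stays bound and relaxes to $Q_\gamma$, so no spreading even $2$-soliton can exist.
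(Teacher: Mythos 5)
Your proposal takes a genuinely different route from the paper, and it has concrete gaps. The paper's proof is a soft variational--topological argument: it considers the one-parameter family $\vec{Q}_{\lambda,\varsigma}=(e^{\lambda}Q_{\varsigma},0)$ with $Q_{\varsigma}=Q(\cdot-z)+\varsigma Q(\cdot+z)$ and $z\gg1$ fixed, computes $\lambda\mapsto J_{\gamma}(e^{\lambda}Q_{\varsigma})$, and shows that one endpoint of the family lies in the open set $\mathcal{V}$ (decay to $0$) and the other in the open set $\mathcal{B}$ (blow-up), using the classification of dynamics below the ground state energy (Proposition \ref{classify}) together with Proposition \ref{vs0}. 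Connectedness then yields a datum whose solution is global and does not decay, and the already-proved soliton resolution (Theorem \ref{sr}) combined with the energy level being just below $n_{\gamma}=J_0(Q)$ (resp.\ $r_{\gamma}=2J_0(Q)$ in the even case) forces the asymptotic profile to be a single escaping soliton (resp.\ a symmetric escaping pair). No modulation equations, bootstrap, or shooting are needed for the existence statement; the quantitative ODE analysis appears only later, for Theorem \ref{centerdistance}.

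Two genuine problems with your plan. First, your justification of the threshold $\gamma\le-2$ in part (2) is wrong: the leading-order net force on the right soliton is $\{-\gamma(1+\sigma)-2\sigma\}c_Q^2e^{-2z}$ (Proposition \ref{zpes}), which for the symmetric pair ($\sigma=1$) is positive iff $\gamma<-1$, not $\gamma\le-2$. So the force balance does not single out $-2$; for $-2<\gamma<-1$ the pair is still repelled at leading order, yet the even ground state $Q_{\gamma}$ exists and is the minimizer of $r_{\gamma}$, which is the actual obstruction (and the actual reason the paper needs $\gamma\le-2$: only then does $r_{\gamma}=2J_0(Q)$ have no minimizer). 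A direct construction along your lines would have to confront this variationally, not dynamically, and your argument as stated would ``prove'' the result for $-2<\gamma<-1$ as well, which the paper does not claim. Second, the entire analytic core of your scheme --- closing the bootstrap at the borderline rate $\|\vec\varepsilon\|_{\mathcal H}\sim t^{-1}$ against a forcing $e^{-2z}\sim t^{-1}$ that is not time-integrable, the uniformity of the shooting parameter over the exhausting intervals $[0,T_n]$, and the passage to the limit --- is deferred rather than carried out; you acknowledge this yourself. This is exactly the delicate part of the strongly-interacting multi-soliton constructions you are emulating, so as written the proposal is a plausible program, not a proof.
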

\begin{remark}
When $\gamma=0$, a solution $\vec{u}$ of \eqref{DNKG} satisfying \eqref{dnkgsr} with $K=1$ converges to the stationary solution $\pm Q(\cdot-y)$ for some $y\in \mathbb{R}$ as $t\to \infty$. However, when $\gamma<0$, a solution $\vec{u}$ of \eqref{DNKG} satisfying \eqref{1soli}  behaves asymptotically as $t\to \infty$ so that $\pm Q$ moves far away from the origin.
\end{remark}

To establish our next result, we define a solution as \eqref{1soli} or \eqref{2soli}.
\begin{definition} 
\begin{enumerate}
\item A solution $\vec{u}$ of \eqref{DNKG} is called a \textit{single solitary wave} if there exist a sign $\sigma=\pm1$ and a function $z:[0,\infty)\to \mathbb{R}$ such that \eqref{1soli} is satisfied.
\item A solution $\vec{u}$ of \eqref{DNKG} is called an \textit{even 2-solitary wave} if $\vec{u}$ is even and there exist a sign $\sigma=\pm1$ and a function $z:[0,\infty)\to \mathbb{R}$ such that \eqref{2soli} is satisfied.
\end{enumerate}
\end{definition}

The asymptotic behavior of a single solitary wave appears as $Q$ moving away from the origin. Furthermore, an even 2-solitary wave asymptotes to the superposition of two solitary waves of the same sign,  which implies the potential-soliton repulsion is stronger than the soliton-soliton attraction. Our main result is that if a solution $\vec{u}$ of \eqref{DNKG} is a single solitary wave or an even 2-solitary wave, the distance between the center of the solitary wave and the origin has an upper bound.

\begin{theorem}\label{centerdistance}
\begin{enumerate}
\item Let $\gamma<0$. If a global solution $\vec{u}$ is a single solitary wave, then we have
\begin{align*}
|z(t)|-\frac{1}{2}\log{t}\lesssim 1.
\end{align*}

\item Let $\gamma\leq -2$. If a global solution $\vec{u}$ is an even 2-solitary wave, then we have
\begin{align*}
z(t)-\frac{1}{2}\log{t}\lesssim 1.
\end{align*}
\end{enumerate}
\end{theorem}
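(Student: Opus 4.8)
The plan is to combine a modulation-theoretic reduction to a scalar ODE for $z(t)$ with the energy dissipation \eqref{energydecay}; I describe the single solitary wave, part (1), since part (2) is the same after using the evenness of $\vec{u}$ to reduce the two-bump configuration to the single parameter $z$. Near $t=+\infty$ I would introduce the decomposition $\vec{u}(t)=(\sigma Q(\cdot-z(t)),0)+\vec{\varepsilon}(t)$, determined by the orthogonality condition $\langle\varepsilon_1(t),Q'(\cdot-z(t))\rangle=0$; by the reflection invariance in \eqref{invariance} we may assume $z(t)\to+\infty$, and \eqref{1soli} gives $\|\vec{\varepsilon}(t)\|_{\mathcal{H}}\to0$ and $|z(t)|\to\infty$. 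Differentiating the orthogonality relation along \eqref{DNKG} and projecting the equation for $\partial_t u$ onto $Q'(\cdot-z(t))$ yields a first-order identity $|\dot z(t)|\lesssim\|\vec{\varepsilon}(t)\|_{\mathcal{H}}$ (in particular $\dot z(t)\to0$) and a second-order one
\begin{align*}
\ddot z(t)+2\alpha\dot z(t)=F(z(t))+\mathcal{E}(t),\qquad |F(z)|\lesssim e^{-2z},\quad |\mathcal{E}(t)|\lesssim\|\vec{\varepsilon}(t)\|_{\mathcal{H}}^2+\|\vec{\varepsilon}(t)\|_{\mathcal{H}}\,e^{-z(t)},
\end{align*}
where $F$ is the potential--soliton force computed from \eqref{Qes} and $\mathcal{E}$ gathers the quadratic-in-$\vec{\varepsilon}$ remainder, the $\dot z\times\vec{\varepsilon}$ terms, and the cross term from the delta acting on $\varepsilon_1$. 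For the upper bound only $|F(z)|\lesssim e^{-2z}$ matters; the sign of $F$ (repulsive since $\gamma<0$) would be needed for a matching lower bound on $z(t)$, which is not claimed.

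The technical heart is the a priori estimate $\|\vec{\varepsilon}(t)\|_{\mathcal{H}}\lesssim Q(z(t))\approx e^{-z(t)}$. Since $\|\vec{\varepsilon}(t)\|_{\mathcal{H}}\to0$ and $Q(z(t))\to0$, the non-increasing energy converges to $E(Q):=E_0(Q,0)$, so \eqref{energydecay} gives $0\le E_\gamma(\vec{u}(t))-E(Q)=2\alpha\int_t^\infty\|\partial_s u(s)\|_{L^2}^2\,ds$. Expanding $E_\gamma$ about the soliton,
\begin{align*}
E_\gamma(\vec{u}(t))-E(Q)=\tfrac12 H_{z(t)}(\vec{\varepsilon}(t))+\tfrac{|\gamma|}{2}Q(z(t))^2+\tfrac{|\gamma|}{2}|\varepsilon_1(t,0)|^2-\gamma\sigma Q(z(t))\varepsilon_1(t,0)+O(\|\vec{\varepsilon}(t)\|_{\mathcal{H}}^3),
\end{align*}
where the quadratic potential term is nonnegative because $\gamma<0$ and the linearized energy $H_z$ is coercive once the translation mode (killed by the orthogonality) and the single unstable direction are removed; that direction has eigenvalue $\lambda_+>0$ coming from $\lambda^2+2\alpha\lambda+\mu_0=0$ with $\mu_0<0$ the negative eigenvalue of $L_+=-\partial_x^2+1-pQ^{p-1}$. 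Its projection $a_+(t)$ satisfies $\dot a_+=\lambda_+a_++O(\|\vec{\varepsilon}\|_{\mathcal{H}}^2+Q(z))$ and $a_+(t)\to0$, hence $|a_+(t)|\lesssim\sup_{s\ge t}(\|\vec{\varepsilon}(s)\|_{\mathcal{H}}^2+Q(z(s)))$; feeding this into the coercivity estimate together with the monotonicity of $E_\gamma(\vec{u}(t))-E(Q)$ and the dissipation identity should close the bound. I expect this to be the main obstacle: all of $\|\vec{\varepsilon}(t)\|_{\mathcal{H}}$, $Q(z(t))$ and $E_\gamma(\vec{u}(t))-E(Q)$ are a priori of the same order $\sim e^{-2z(t)}$, $Q(z(t))$ is not known to be monotone, and the static part of $\vec{\varepsilon}$ (slaved to the delta source, of size $Q(z)$) must be separated from the dynamical part $\partial_t u$ (controlled only in an integrated sense by \eqref{energydecay}), so closing the estimate requires a Lyapunov/monotonicity argument coupling $\|\vec{\varepsilon}(t)\|_{\mathcal{H}}^2$ and $Q(z(t))^2$ rather than a naive bootstrap.

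Granting $\|\vec{\varepsilon}(t)\|_{\mathcal{H}}\lesssim e^{-z(t)}$, the modulation identity becomes $\ddot z+2\alpha\dot z\le C e^{-2z}$. Setting $y(t):=e^{2z(t)}$ and using $\dot z(t)\to0$, one computes $\ddot y+(2\alpha-2\dot z)\dot y\le 2C$ with $2\alpha-2\dot z\ge\alpha$ for $t$ large, so $v:=\dot y$ obeys $\dot v\le 2C-\alpha v$ whenever $v\ge0$, which keeps $\dot y$ bounded; hence $y(t)\lesssim t$, i.e. $z(t)\le\tfrac12\log t+O(1)$, which is part (1). For part (2), $z$ is the common distance of the two bumps and the effective force superposes the delta repulsion ($\sim|\gamma|e^{-2z}$) and the same-sign soliton--soliton attraction ($\sim e^{-2z}$); the hypothesis $\gamma\le-2$ prevents the attraction from dominating (and at $\gamma=-2$ a leading-order cancellation only slows the ejection, making the upper bound easier), and since again only $|F(z)|\lesssim e^{-2z}$ enters, the same error analysis and $y=e^{2z}$ computation give $z(t)-\tfrac12\log t\lesssim1$.
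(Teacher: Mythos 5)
Your overall architecture (modulation of $z$, coercivity of the linearized energy modulo the translation and unstable modes, backward integration of the unstable mode, then integration of $e^{2z}$ to get $z\le\tfrac12\log t+O(1)$) matches the paper's, and your final ODE step for $y=e^{2z}$ is sound. But the proof has a genuine gap exactly where you flag it: the entire argument is conditional on the a priori bound $\|\vec{\epsilon}(t)\|_{\mathcal{H}}\lesssim e^{-z(t)}$, which you introduce with ``granting'' and never establish. This is not a routine bootstrap, for the reasons you yourself list ($\|\vec{\epsilon}\|$, $e^{-z}$ and the excess energy are all of the same order, $z$ is not known to be monotone, and the part of $\vec{\epsilon}$ slaved to the delta is only $O(e^{-z})$, not $O(e^{-2z})$); in fact your proposed control $|a_+(t)|\lesssim\sup_{s\ge t}(\|\vec{\epsilon}(s)\|^2+Q(z(s)))$ is circular without quantitative control on how fast $e^{-z(s)}$ can grow for $s\ge t$.

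What closes this in the paper is a coupled Gronwall argument that you would need to supply. One builds a damping-adapted quadratic functional $\mathcal{E}$ (with the twisted variable $\eta+\mu\epsilon$, $0<\mu\ll\alpha$) satisfying $\mathcal{E}'\le-2\mu\mathcal{E}+\dots$, augments it to $\mathcal{G}=\mathcal{E}+L(a_-^2+a_0^2)$ so that $\mathcal{G}'\le-\delta\mathcal{G}+L_1a_+^2+\delta^{-1}(e^{-2z}+\|\vec{\epsilon}\|_{\mathcal{H}}^3)$ as in \eqref{Ges}, and eliminates the unstable mode by a bootstrap (Proposition \ref{instaes}) whose monotone quantity $\mathcal{I}$ is built from the dissipated energy $E_{\gamma}(\vec{u})-(1+\sigma)J_0(Q)$ — this is where the variational upper bound \eqref{Ges2} enters, and it yields $|a_+|\le M^{-1}(\mathcal{G}+e^{-2z})^{1/2}$, hence $\mathcal{G}'\le-\delta\mathcal{G}+Ce^{-2z}$. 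The key final step is then to couple this with $w'\le 2\hat{C}\mathcal{G}w+2\hat{C}$ for $w=e^{2z}$: since $\mathcal{G}\to0$, $w$ grows at most like $e^{\hat{\delta}t}$ with $\hat{\delta}\ll\delta$, so the Duhamel integral $\int_T^te^{\delta(s-t)}w(s)^{-1}\,ds\lesssim w(t)^{-1}$ gives $\mathcal{G}\lesssim e^{-2z}$ \emph{a posteriori}, and only then $w'\lesssim1$. Two smaller points: the paper works with a first-order orthogonality $\int(\eta+2\alpha\epsilon)\partial_xQ_+=0$ and never forms your second-order ODE $\ddot z+2\alpha\dot z=F(z)+\dots$ (the bound $|z'|\lesssim e^{-2z}+\|\vec{\epsilon}\|_{\mathcal{H}}^2$ suffices); and your claim of a leading-order force cancellation at $\gamma=-2$ is incorrect — from \eqref{zes} the coefficient is $(-2\gamma-2)c_Q^2$, which vanishes at $\gamma=-1$ and is strictly positive for $\gamma\le-2$ — though this is immaterial for the upper bound.
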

Gathering Theorem \ref{existencesoliton} and Theorem \ref{centerdistance}, if $\gamma<0$ there exists a solution of \eqref{DNKG} satisfying \eqref{1soli} by the potential-soliton repulsion, but the potential-soliton repulsion is not strong so that the distance between origin and the center of the solitary wave is roughly less than $\frac{1}{2}\log{t}$. Furthermore, if $\gamma\leq -2$, there exists a soliton of \eqref{DNKG} satisfying \eqref{2soli} by the potential-soliton repulsion, which means that the potential-soliton repulsion is stronger than the soliton-soliton attraction.

\subsection{Previous results}
There have been intensive study on the global behavior of general (large) solutions for the nonlinear dispersive equations, where the guiding principle is \textit{soliton resolution conjecture}, which claims that generic global solutions are asymptotic to the superposition of solitary waves for large time. In particular, the conjecture has been studied for the energy-critical wave equation. We refer to \cite{CDKM,DJKM,DKM1,DKM2,JL4}, where the conjecture has been proved for all radial solutions in space dimension $N=3,5,7,\cdots$ or $N=4,6$. Similar results have been obtained in the case with a potential \cite{JLX,LMZ}, with a damping term \cite{GZ}, as well as for the wave maps \cite{DKMM,JL1}, all under some rotational symmetry.

On the other hand, the conjecture is widely open for the nonlinear Klein-Gordon equation, namely the $\alpha=0$ and $\gamma=0$ case. Nakanishi and Schlag \cite{NS} have proved the conjecture as long as the energy of the solution is not much larger than the ground state energy in space dimension $N=3$. 

However, for the damped nonlinear Klein-Gordon equation, namely the $\gamma=0$ case, the energy decay \eqref{energydecay} makes the analysis simpler than the undamped case. Historically Keller \cite{K1} constructed stable and unstable manifolds in a neighborhood of any stationary solution of \eqref{DNKG}, and Feireisl \cite{F} proved the conjecture along a time sequence. Burq, Raugel and Schlag \cite{BRS} proved the conjecture in the full limit $t\to\infty$ for all radial solutions. More recently, C\^{o}te, Martel and Yuan \cite{CMY} proved the conjecture without size restriction and radial assumption. To the best of our knowledge, this is the first complete proof of the soliton resolution conjecture for a non-integrable equation without any size or symmetry restriction and with moving solitons. Other results for the damped nonlinear Klein-Gordon equation are in \cite{CMYZ,CY,IN}. Their arguments are useful for \eqref{DNKG}.

 In addition, there are many results about multi-solitary waves for the nonlinear dispersive equations. For the nonlinear Klein-Gordon equation, C\^{o}te and Mun\~{o}z \cite{CM} have constructed multi-solitary waves. Note that multi-solitary waves move at a constant speed. More recently, Aryan \cite{A} has constructed 2-solitary waves whose speed converges to $0$ as $t\to\infty$. Strictly in \cite{A} the distance between each solitary wave is $2(1+O(1))\log{t}$ as $t\to \infty$. As \cite{A}, there are several results about multi-solitary waves with logarithmic distance; see \textit{e.g.} \cite{GI,MN,N1,N2}. We note that for the damped nonlinear Klein-Gordon equation, multi-solitary waves of \eqref{DNKG} are impossible to move at a constant speed by the damping $\alpha$. See also \cite{J1,J2,JL1,JL2,JL4,JL3} for works about two-bubble solutions.

There are few results about \eqref{DNKG} with $\alpha=0$ because of the singularity of $\delta_0$. Csobo, Genoud, Ohta and Royer \cite{CGOR} have proved the stability of standing waves for \eqref{DNKG}. On the other hand, there are many results for the nonlinear Schr\"{o}dinger equation with a delta potential; see \textit{e.g.} \cite{FJ,FOO,GI,GIS,II}. Notably, we have several technical tools in \cite{FJ}. We also see the potential-soliton repulsion in \cite{GI}.

\section{Local well-posedness  and uniform bound}
In this section we discuss the local well-posedness of the Cauchy problem \eqref{DNKG}. First we show that the operator $\mathcal{A}$ generates a $C^0$-semigroup on $\mathcal{H}$.  Then \eqref{DNKG} can be rewritten as 
\begin{align}
\label{duhamel}
	\left\{
	\begin{aligned}
		&\frac{d}{dt}U(t)-\mathcal{A}U(t)=F(U(t)),
		\\
		&U(0)=U_0,
	\end{aligned}
	\r.
\end{align}
where $U=(u,v)\in \mathcal{H}$ and 
\begin{align*}
	F(U)=\begin{pmatrix}
0  \\
f(u)   \\
\end{pmatrix}
.
\end{align*}
Thus we only need to show the local well-posedness of \eqref{duhamel}, and this can be proven by the standard arguments (see \cite{CH}).

\subsection{Linear evolution in the energy space}

In this subsection we show that the operator $\mathcal{A}$ generates a strongly continuous group on $\mathcal{H}$. Since $\mathcal{D}$ defined by \eqref{Adomain} is dense in $\mathcal{H}$, we can consider $\mathcal{A}$ on $\mathcal{H}$ (See \cite{CGOR}). Next, for $\mu \geq 0$ we introduce on $\mathcal{H}$ the quadratic form defined by
\begin{align*}
\| (u,v)\|_{\mathcal{H},\mu ,\gamma}=\| u^{\prime}\|_{L^2}^2+\mu^2\|u\|_{L^2}^2-\gamma|u(0)|^2+\|v\|_{L^2}^2.
\end{align*}
 We denote by $\langle \cdot , \cdot \rangle_{\mathcal{H}, \mu , \gamma}$ the corresponding bilinear form. Then by direct calculation for $U=(u,v) \in \mathcal{D}$, 
 \begin{align*}
\langle \mathcal{A}U,U\rangle_{\mathcal{H}, 1,\gamma}=-2\alpha \| v\|_{L^2}^2\leq 0.
\end{align*}
In addition the following lemma holds.
\begin{lemma}\label{deltaequivalent}
For $\gamma<2$ there exists $C_{\gamma}>1$ such that for all $u\in H^1$, we have 
\begin{align}\label{deqes}
C_\gamma^{-1}\| u\|_{H^1}^2\leq \| u\|_{H^1}^2-\gamma|u(0)|^2\leq C_\gamma\|u\|_{H^1}^2.
\end{align}
\end{lemma}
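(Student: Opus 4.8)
The plan is to reduce the lemma to the sharp one-dimensional trace inequality
\begin{align*}
|u(0)|^2\le \|u\|_{L^2}\|u'\|_{L^2}\le \tfrac12\|u\|_{H^1}^2,\qquad u\in H^1(\mathbb{R}),
\end{align*}
and then keep track of constants case by case in $\gamma$. First I would prove this inequality: for $u\in C_c^\infty(\mathbb{R})$, write $|u(0)|^2=-\int_0^\infty \frac{d}{dx}|u(x)|^2\,dx$ and $|u(0)|^2=\int_{-\infty}^0\frac{d}{dx}|u(x)|^2\,dx$, use the pointwise bound $\big|\frac{d}{dx}|u|^2\big|\le 2|u|\,|u'|$ together with Cauchy--Schwarz on each half-line, add the two identities, and apply Cauchy--Schwarz once more to $\|u\|_{L^2(\mathbb{R}^-)}\|u'\|_{L^2(\mathbb{R}^-)}+\|u\|_{L^2(\mathbb{R}^+)}\|u'\|_{L^2(\mathbb{R}^+)}$; this yields the first inequality, and the second is just $\|u\|_{L^2}\|u'\|_{L^2}\le\tfrac12(\|u\|_{L^2}^2+\|u'\|_{L^2}^2)$. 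Density of $C_c^\infty(\mathbb{R})$ in $H^1(\mathbb{R})$, together with the continuity of $u\mapsto u(0)$ that the inequality itself provides, extends it to all $u\in H^1(\mathbb{R})$.

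With the trace inequality in hand I would split into two cases. If $0\le\gamma<2$, then $-\gamma|u(0)|^2\le 0$ gives the upper bound $\|u\|_{H^1}^2-\gamma|u(0)|^2\le\|u\|_{H^1}^2$, while
\begin{align*}
\|u\|_{H^1}^2-\gamma|u(0)|^2\ \ge\ \|u\|_{H^1}^2-\tfrac{\gamma}{2}\|u\|_{H^1}^2\ =\ \Bigl(1-\tfrac{\gamma}{2}\Bigr)\|u\|_{H^1}^2,
\end{align*}
and the coefficient $1-\gamma/2$ is \emph{positive precisely because} $\gamma<2$. If $\gamma<0$ the roles are exchanged: $\|u\|_{H^1}^2-\gamma|u(0)|^2\ge\|u\|_{H^1}^2$ is trivial, and the upper bound reads $\|u\|_{H^1}^2-\gamma|u(0)|^2\le(1+\tfrac{|\gamma|}{2})\|u\|_{H^1}^2$. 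Hence \eqref{deqes} holds with, for instance,
\begin{align*}
C_\gamma=\max\Bigl\{\,2,\ \tfrac{2}{2-\gamma},\ 1+\tfrac{|\gamma|}{2}\,\Bigr\}>1,
\end{align*}
which is finite for every $\gamma<2$. The same computation shows in passing that $\|\cdot\|_{\mathcal{H},1,\gamma}^{1/2}$ is a norm on $\mathcal{H}$ equivalent to $\|\cdot\|_{\mathcal{H}}$, which is what is really needed later.

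There is no real obstacle here — the statement is a routine computation — but one point genuinely matters: the constant $\tfrac12$ in $|u(0)|^2\le\tfrac12\|u\|_{H^1}^2$ must be the sharp one, since a cruder bound such as $|u(0)|^2\le\|u\|_{H^1}^2$ would only give the equivalence for $\gamma<1$. It is also worth recording that $C_\gamma$ necessarily blows up as $\gamma\uparrow 2$: the quadratic form $u\mapsto\|u\|_{H^1}^2-\gamma|u(0)|^2$ loses coercivity exactly at $\gamma=2$ (test against functions concentrating near $0$), which is precisely why the assumption $\gamma<2$ is imposed throughout the paper.
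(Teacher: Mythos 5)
Your proof is correct and complete. Note that the paper does not actually prove this lemma: it simply cites \cite[Lemma 2.3]{CGOR}, so you have supplied the elementary argument that the paper outsources. Your route --- the sharp trace bound $|u(0)|^2\le \|u\|_{L^2}\|u'\|_{L^2}\le \tfrac12\|u\|_{H^1}^2$, obtained by integrating $\tfrac{d}{dx}|u|^2$ over each half-line and applying Cauchy--Schwarz twice, followed by the two-case analysis in the sign of $\gamma$ --- is the standard one and is essentially what underlies the cited result. Your two side remarks are the genuinely valuable content here: the constant $\tfrac12$ must be sharp (a cruder trace bound would only cover $\gamma<1$), and coercivity degenerates exactly at $\gamma=2$ (equality in the trace inequality is attained in the limit by $e^{-\lambda|x|}$ with $\lambda=1$), which explains both why $C_\gamma\to\infty$ as $\gamma\uparrow 2$ and why the standing hypothesis $\gamma<2$ appears throughout the paper. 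The only cosmetic point: in your final formula for $C_\gamma$ the entry $2$ already dominates $1+\tfrac{|\gamma|}{2}$ when $|\gamma|\le 2$, so the max could be written with two entries, but this changes nothing.
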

\begin{proof}
See \cite[Lemma 2.3]{CGOR}.
\end{proof}

\begin{proposition}\label{HY}
The operator $\mathcal{A}$ generates a $C^0$-semigroup on $\mathcal{H}$.
\end{proposition}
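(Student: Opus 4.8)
The plan is to invoke the Lumer--Phillips theorem, after re-norming $\mathcal{H}$ by the equivalent Hilbert-space structure $\langle\cdot,\cdot\rangle_{\mathcal{H},1,\gamma}$ (the choice $\mu=1$). Three ingredients are required: density of $\mathcal{D}$, dissipativity of $\mathcal{A}$, and the range (maximality) condition. Density has already been recorded, and dissipativity is precisely the displayed identity $\langle\mathcal{A}U,U\rangle_{\mathcal{H},1,\gamma}=-2\alpha\|v\|_{L^2}^2\le0$ for $U=(u,v)\in\mathcal{D}$, together with Lemma \ref{deltaequivalent}, which guarantees that $\|(u,v)\|_{\mathcal{H},1,\gamma}^2=(\|u\|_{H^1}^2-\gamma|u(0)|^2)+\|v\|_{L^2}^2$ is positive definite and equivalent to the standard norm on $\mathcal{H}$. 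So the real content is to establish $\Ran(\lambda I-\mathcal{A})=\mathcal{H}$ for some (equivalently every) $\lambda>0$.

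Given $(g_1,g_2)\in\mathcal{H}$, the equation $(\lambda I-\mathcal{A})(u,v)=(g_1,g_2)$ is equivalent, after substituting $v=\lambda u-g_1$, to the scalar problem
\[
-u''+c^2u=h,\qquad c^2:=\lambda^2+2\alpha\lambda+1\ge1,\quad h:=g_2+(\lambda+2\alpha)g_1\in L^2(\mathbb{R}),
\]
to be solved in $H^{2,\ast}$ with the interface condition $u'(0^+)-u'(0^-)=-\gamma u(0)$; then $v=\lambda u-g_1\in H^1$ automatically, so $(u,v)\in\mathcal{D}$. I would produce $u$ variationally: find $u\in H^1$ with $B(u,\phi)=\int_{\mathbb{R}}h\phi$ for all $\phi\in H^1$, where $B(u,\phi):=\int_{\mathbb{R}}u'\phi'+c^2\int_{\mathbb{R}}u\phi-\gamma u(0)\phi(0)$. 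Continuity of $B$ on $H^1\times H^1$ is immediate from $H^1(\mathbb{R})\hookrightarrow L^\infty(\mathbb{R})$, and coercivity follows from $c^2\ge1$ and Lemma \ref{deltaequivalent}, since $B(u,u)\ge\|u\|_{H^1}^2-\gamma|u(0)|^2\ge C_\gamma^{-1}\|u\|_{H^1}^2$. As $h\in L^2\subset(H^1)^\ast$, the Lax--Milgram lemma yields a unique such $u$.

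It then remains to upgrade this weak solution. Testing against $\phi\in C_c^\infty(\mathbb{R}\setminus\{0\})$ gives $u''=c^2u-h\in L^2(\mathbb{R}\setminus\{0\})$, so $u\in H^2(\mathbb{R}\setminus\{0\})$ and hence $u\in H^{2,\ast}$; integrating by parts on $(-\infty,0)$ and on $(0,\infty)$ in the identity $B(u,\phi)=\int_{\mathbb{R}}h\phi$ and letting $\phi(0)$ be arbitrary yields exactly $u'(0^+)-u'(0^-)=-\gamma u(0)$. Thus $(u,\lambda u-g_1)\in\mathcal{D}$ solves the resolvent equation, which establishes the range condition; Lumer--Phillips then provides a $C^0$-semigroup of contractions on $(\mathcal{H},\|\cdot\|_{\mathcal{H},1,\gamma})$, and by the norm equivalence this is a $C^0$-semigroup on $\mathcal{H}$.

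I expect the only genuinely structural step to be the coercivity of $B$: this is where the hypothesis $\gamma<2$ and the presence of the $\delta_0$-interaction really enter, and it is handled precisely by Lemma \ref{deltaequivalent} together with $c^2\ge1$. Everything else — eliminating $v$, applying Lax--Milgram, the interior elliptic regularity, and the integration by parts producing the jump condition — is routine. (If one additionally wants that $\mathcal{A}$ generates a group, one runs the analogous argument for the operator obtained by conjugating with $(u,v)\mapsto(u,-v)$ and reversing time, for which the same range computation applies and dissipativity is replaced by a quasi-dissipativity bound.)
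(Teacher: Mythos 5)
Your proposal is correct and follows essentially the same route as the paper: an equivalent inner product built from the quadratic form with the $-\gamma|u(0)|^2$ term, dissipativity of $\mathcal{A}$, reduction of the resolvent equation to the scalar problem $-u''+(\lambda^2+2\alpha\lambda+1)u=h$ with the jump condition, and a generation theorem. The only differences are cosmetic: you take $\mu=1$ and invoke Lumer--Phillips where the paper takes $\mu<1$ and shifts by $\beta$ to treat $\pm\mathcal{A}$ symmetrically via Hille--Yosida, and you supply the Lax--Milgram/elliptic-regularity argument for surjectivity that the paper outsources to \cite[Lemma 2.5]{CGOR}.
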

\begin{proof}
Let $0<\mu<1$ be close to $1$.
 For $U=(u,v)\in \mathcal{D}$, we have 
\begin{align*}
\langle \mathcal{A}U,U\rangle_{\mathcal{H},\mu,\gamma}=(\mu^2-1)\langle u,v\rangle_{L^2}-2\alpha\|v\|_{L^2}^2,
\end{align*}
and 
\begin{align*}
|\langle \mathcal{A}U,U\rangle_{\mathcal{H},\mu,\gamma}|\lesssim \|u\|_{L^2}^2+\|v\|_{L^2}^2.
\end{align*}
Moreover by Lemma \ref{deltaequivalent}, 
\begin{align*}
\| U\|_{\mathcal{H},\mu,\gamma}^2\gtrsim \|u\|_{L^2}^2+\|v\|_{L^2}^2.
\end{align*}
Hence, fixing $\beta>0$ large enough, we have 
\begin{align*}
\langle (\pm \mathcal{A}-\beta)U,U\rangle_{\mathcal{H},\mu,\gamma}\leq 0.
\end{align*}
Therefore the operators $\pm \mathcal{A}-\beta$ are dissipative. In particular, for $\lambda > \beta$ we have
\begin{align}\label{inversebound}
\| (\pm \mathcal{A}-\lambda)U\|_{\mathcal{H},\mu,\gamma}^2\geq \|(\pm \mathcal{A}-\beta)U\|_{\mathcal{H},\mu,\gamma}^2+(\lambda-\beta)^2\|U\|_{\mathcal{H},\mu,\gamma}^2,
\end{align}
so that $\pm \mathcal{A}-\lambda$ are injective with close range. Next we show that $\mathcal{A}-\lambda$ is surjective. Let $F=(f,g)\in \mathcal{H}$. For $U=(u,v) \in \mathcal{D}$, we have
\begin{align*}
(A-\lambda)U=F\Leftrightarrow 
\begin{cases} 
v=\lambda u+f,
\\
u^{\prime \prime}-(\lambda^2+2\alpha\lambda+1)u=g+(\lambda+2\alpha)f.
\end{cases} 
\end{align*}
Then by \cite[Lemma 2.5]{CGOR}, the operator $\mathcal{A}-\lambda$ is surjective. Hence, $\mathcal{A}-\lambda$ has a bounded inverse and by \eqref{inversebound}, 
\begin{align*}
\|(\mathcal{A}-\lambda)^{-1}\|_{\mathcal{L}(\mathcal{H})}\leq \frac{1}{\lambda-\beta}.
\end{align*}
Therefore by the Hille-Yoshida Theorem, this proves that $\mathcal{A}$ generates a $C^0$-semigroup on $\mathcal{H}$. Now the same holds with $\mathcal{A}$ replaced by $-\mathcal{A}$, and the proof is complete.
\end{proof}

Furthermore we estimate the strongly continuous group $(e^{t\mathcal{A}})_{t\geq 0}$.
\begin{proposition}\label{lineares}
There exist $C>0$ and $\kappa>0$ such that, for all $t\geq 0$
\begin{align}\label{le}
\| e^{t\mathcal{A}}\|_{\mathcal{L}(\mathcal{H})}\leq Ce^{-\kappa t},\  \|  e^{t\mathcal{A}}\|_{\mathcal{L}(L^2\times H^{-1})}\leq Ce^{-\kappa t}.
\end{align}
\end{proposition}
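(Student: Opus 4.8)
The plan is to build a Lyapunov functional for the linear flow, by perturbing the damped energy
$E(t)=\tfrac12\big(\|u'\|_{L^2}^2+\|u\|_{L^2}^2-\gamma|u(0)|^2+\|\partial_t u\|_{L^2}^2\big)$
by a small multiple of the virial quantity $I(t)=\int_{\R}u\,\partial_t u\,dx$, and then to use Lemma~\ref{deltaequivalent} to convert the resulting differential inequality into exponential decay on $\mathcal H$; the $L^2\times H^{-1}$ estimate is deduced afterwards by conjugating the flow with a fractional power of $L_\gamma:=-\partial_x^2+1-\gamma\delta_0$. Since $\mathcal D$ is dense in $\mathcal H$ and $e^{t\mathcal A}$ is bounded on bounded time intervals, it suffices to prove the bound for data $U_0\in\mathcal D$ and then pass to the limit; for such data $\vec u(t)=e^{t\mathcal A}U_0$ belongs to $C([0,\infty);\mathcal D)\cap C^1([0,\infty);\mathcal H)$, so the functionals below are $C^1$ in $t$ and every integration by parts that uses the transmission condition $u'(0^+)-u'(0^-)=-\gamma u(0)$ is legitimate.

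First I would record $\tfrac{d}{dt}E=-2\alpha\|\partial_t u\|_{L^2}^2$ (the linear analogue of \eqref{energydecay}) and, using the equation in the form $\partial_t^2u=\partial_x^2u-u+\gamma\delta_0u-2\alpha\partial_tu$ together with one integration by parts,
\[
\tfrac{d}{dt}I=\|\partial_tu\|_{L^2}^2-\big(\|u'\|_{L^2}^2+\|u\|_{L^2}^2-\gamma|u(0)|^2\big)-2\alpha I .
\]
Putting $E_\eta:=E+\eta I$ and combining these identities,
\[
\tfrac{d}{dt}E_\eta=(-2\alpha+\eta)\|\partial_tu\|_{L^2}^2-\eta\big(\|u'\|_{L^2}^2+\|u\|_{L^2}^2-\gamma|u(0)|^2\big)-2\alpha\eta I .
\]
By Young's inequality and Lemma~\ref{deltaequivalent} the indefinite term is bounded by $2\alpha\eta|I|\le\alpha\|\partial_tu\|_{L^2}^2+\alpha\eta^2 C_\gamma\big(\|u'\|_{L^2}^2+\|u\|_{L^2}^2-\gamma|u(0)|^2\big)$, and for $\eta\le\min\{\alpha/2,(2\alpha C_\gamma)^{-1}\}$ this yields
\[
\tfrac{d}{dt}E_\eta\le-\tfrac{\alpha}{2}\|\partial_tu\|_{L^2}^2-\tfrac{\eta}{2}\big(\|u'\|_{L^2}^2+\|u\|_{L^2}^2-\gamma|u(0)|^2\big).
\]
For $\eta$ small (again by Lemma~\ref{deltaequivalent}) one also has $E_\eta\ceq\|u\|_{H^1}^2+\|\partial_tu\|_{L^2}^2=\|\vec u\|_{\mathcal H}^2$, so the last display reads $\tfrac{d}{dt}E_\eta\le -cE_\eta$ for some $c=c(\alpha,\gamma)>0$; Gr\"onwall then gives $\|e^{t\mathcal A}U_0\|_{\mathcal H}\le Ce^{-\kappa t}\|U_0\|_{\mathcal H}$ with $\kappa=c/2$, which is the first bound in \eqref{le}.

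For the second bound, let $L_\gamma$ be the self-adjoint operator on $L^2$ associated with the form $u\mapsto\|u'\|_{L^2}^2+\|u\|_{L^2}^2-\gamma|u(0)|^2$ on $H^1$; by Lemma~\ref{deltaequivalent} this form is coercive, so $L_\gamma\ge C_\gamma^{-1}>0$, $L_\gamma^{-1/2}$ is bounded on $L^2$, $\mathcal D(L_\gamma^{1/2})=H^1$ with equivalent norm, and $L_\gamma^{-1/2}$ extends to an isomorphism $H^{-1}\to L^2$. Since $\mathcal A$ acts as $(u,v)\mapsto(v,-L_\gamma u-2\alpha v)$, the bounded operator $\Lambda:=\diag(L_\gamma^{-1/2},L_\gamma^{-1/2})$ commutes with $\mathcal A$, hence with $e^{t\mathcal A}$, and is an isomorphism of $L^2\times H^{-1}$ onto $\mathcal H$; therefore $\|e^{t\mathcal A}\|_{\mathcal L(L^2\times H^{-1})}\lesssim\|e^{t\mathcal A}\|_{\mathcal L(\mathcal H)}\le Ce^{-\kappa t}$. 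Equivalently, one simply reruns the Lyapunov-functional argument one derivative lower, with $(L_\gamma^{-1/2}u,L_\gamma^{-1/2}\partial_tu)$ in place of $(u,\partial_tu)$.

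The one delicate point is the quantitative choice of $\eta$: the indefinite cross term $2\alpha\eta I$ must be absorbed \emph{simultaneously} into the damping term $-2\alpha\|\partial_tu\|_{L^2}^2$ and into $-\eta(\|u'\|_{L^2}^2+\|u\|_{L^2}^2-\gamma|u(0)|^2)$, and Lemma~\ref{deltaequivalent} is exactly what makes this possible for every $\gamma<2$ — in particular for $\gamma\le-2$, where the sign of the delta term is irrelevant to the linear decay — while at the same time guaranteeing the coercivity $E_\eta\ceq\|\vec u\|_{\mathcal H}^2$. Everything else (the density/regularity reduction, the identification $\mathcal D(L_\gamma^{\pm1/2})\simeq H^{\pm1}$, and, for a reader who prefers the spectral route, computing $\Spec(\mathcal A)$ and invoking the Gearhart--Pr\"uss theorem, whose required resolvent bound is essentially the estimate above) is routine.
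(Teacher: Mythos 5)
Your proof is correct and follows essentially the same route as the paper: a Lyapunov functional obtained by perturbing the damped energy with a small multiple of $\int u\,\partial_tu$, with Lemma~\ref{deltaequivalent} supplying both the coercivity $E_\eta\approx\|\vec u\|_{\mathcal H}^2$ and the absorption of the indefinite term (the paper adds $\alpha\|u\|_{L^2}^2$ to the virial part so that the cross term $-2\alpha\eta I$ cancels exactly, whereas you absorb it by Young's inequality — a cosmetic difference). Your conjugation by $\diag(L_\gamma^{-1/2},L_\gamma^{-1/2})$ is a clean way to make precise the $L^2\times H^{-1}$ bound, which the paper only asserts by saying the same argument applies one derivative lower.
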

\begin{proof}
Consider the linear equation
\begin{align*}
\label{LDNKG}
	\left\{
	\begin{aligned}
		&\partial_{t}^2u-\partial_{x}^2u+2\alpha \partial_{t}u+u-\gamma {\delta}_0u=0, & (t,x) \in \mathbb{R} \times \mathbb{R},
		\\
		&u(0,x)=u_{0}(x), &x \in \mathbb{R},
	\end{aligned}
	\r.
\end{align*}
and define $E$ as
\begin{align*}
E(t)=\frac{1}{2}\left(\|{\partial}_tu(t)\|_{L^2}^2+\|u(t)\|_{H^1}^2-\gamma|u(t,0)|^2\right)+\epsilon\left(\int u(t){\partial}_tu(t)+\alpha \|u(t)\|_{L^2}^2\right),
\end{align*} 
where $\epsilon>0$ small enough. Then by direct calculation, $E$ satisfies
\begin{align*}
  E^{\prime}(t) &= -2\alpha\|{\partial}_tu(t)\|_{L^2}^2+\epsilon\left( \|{\partial}_tu(t)\|_{L^2}^2-\|u(t)\|_{H^1}^2+\gamma|u(t,0)|^2\right)
  \\
   & =-\frac{\epsilon}{2}\left(\|{\partial}_tu(t)\|_{L^2}^2+\|u(t)\|_{H^1}^2-\gamma|u(t,0)|^2\right)-\frac{\epsilon}{2}\left(\|u(t)\|_{H^1}^2-\gamma|u(t,0)|^2\right)
   \\
   &\quad -(2\alpha-\frac{5}{4}\epsilon)\|{\partial}_tu(t)\|_{L^2}^2.
\end{align*}
Since $\epsilon$ is small enough, we have
\begin{align*}
&\quad \epsilon^2\left(\int u(t)\partial_tu(t)+\alpha\|u(t)\|_{L^2}^2\right)
\\
&\leq (\alpha+\frac{1}{2})\epsilon^2\|u(t)\|_{H^1}^2+\frac{\epsilon^2}{2}\|{\partial}_tu(t)\|_{L^2}^2
\\
&\leq (\alpha+\frac{1}{2})C_{\gamma}^{-1}\epsilon^2\left(\|u(t)\|_{H^1}^2-\gamma|u(t,0)|^2\right)+\frac{\epsilon^2}{2}\|{\partial}_tu(t)\|_{L^2}^2
\\
&\leq \frac{\epsilon}{2}\left(\|u(t)\|_{H^1}^2-\gamma|u(t,0)|^2\right)+(2\alpha-\frac{5}{4}\epsilon)\|{\partial}_tu(t)\|_{L^2}^2.
\end{align*}
Therefore we have
\begin{align*}
E^{\prime}(t)<-\epsilon E(t),
\end{align*}
which implies 
\begin{align*}
E(t)<e^{-\epsilon t}E(0).
\end{align*}
Furthermore, we define ${\gamma}^{\prime}=\frac{\gamma}{1-\epsilon}$. Then $\gamma<{\gamma}^{\prime}<2$ holds because $\epsilon$ is small enough.  Therefore we have 
\begin{align*}
E(t)&\geq \frac{1}{2}(1-\epsilon)\left(\|u(t)\|_{H^1}^2-\frac{\gamma}{1-\epsilon }|u(t,0)|^2\right)+(\frac{1}{2}-\frac{\epsilon}{2})\|{\partial}_tu(t)\|_{L^2}^2
\\
&\geq\frac{1}{2}(1-\epsilon)C_{{\gamma}^{\prime}}^{-1}\|u(t)\|_{H^1}^2+(\frac{1}{2}-\frac{\epsilon}{2})\|{\partial}_tu(t)\|_{L^2}^2.
\end{align*}
So we get $E(t)\sim \|\vec{u}(t)\|_{\mathcal{H}}^2$, and 
\begin{align*}
\| \vec{u}(t)\|_{\mathcal{H}}^2\lesssim e^{-\epsilon t}\|\vec{u}(0)\|_{\mathcal{H}}^2.
\end{align*}
This means that $\| e^{t\mathcal{A}}\|_{\mathcal{L}(\mathcal{H})}\lesssim e^{-\kappa t}$ for some $\kappa>0$. Now the same holds with $\mathcal{H}$ replaced by $L^2\times H^{-1}$, and the proof is complete.

\end{proof}

\subsection{Proof of Local well-posedness of the Cauchy problem}
We now turn to the equation \eqref{duhamel}. We show that the equation \eqref{duhamel} is locally well-posed in $\mathcal{H}$ by using the standard arguments as \cite{CH}. 
\begin{proposition}\label{lwp}
The following properties hold.
\begin{enumerate}
\item For any $U_0\in \mathcal{H}$, the equation \eqref{duhamel} has a unique maximal solution $U\in C([0,T(U_0)), \mathcal{H})$, with initial data $U_0$.

\item If $T(U_0)<\infty$, then $\lim_{t\to T(U_0)-0}\|U(t)\|_{\mathcal{H}}=\infty$.

\item $T:\mathcal{H}\to (0,\infty]$ is lower semicontinuous: given the initial conditions $U_0,U_{0,n}\in \mathcal{H}$ such that $U_{0,n}$ converges to $U_0$ in $\mathcal{H}$, we have that
\begin{align*}
T(U_0)\leq\liminf_{n\to \infty} T(U_{0,n}).
\end{align*}
Moreover if $U_{0,n}\to U_0$ and if $T<T(U_0)$, then $U_n\to U$ in $C([0,T],\mathcal{H})$, where $U_n$ and $U$ are the solutions of \eqref{duhamel} corresponding to the initial data $U_{0,n}$ and $U_0$.

\end{enumerate}

\end{proposition}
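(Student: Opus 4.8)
The plan is to prove Proposition \ref{lwp} by the standard fixed-point argument for semilinear evolution equations driven by a $C^0$-semigroup, exactly in the spirit of Cazenave--Haraux \cite{CH}, using the mild formulation
\begin{align*}
U(t)=e^{t\mathcal{A}}U_0+\int_0^t e^{(t-s)\mathcal{A}}F(U(s))\,ds.
\end{align*}
The only input specific to \eqref{DNKG} is that $\mathcal{A}$ generates a $C^0$-semigroup (Proposition \ref{HY}, with the uniform bound \eqref{le} from Proposition \ref{lineares}) and that the nonlinearity $F(U)=(0,f(u))$ with $f(u)=|u|^{p-1}u$ is locally Lipschitz from $\mathcal{H}=H^1\times L^2$ to itself. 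For the latter I would invoke the Sobolev embedding $H^1(\mathbb{R})\hookrightarrow L^\infty(\mathbb{R})$: for $u_1,u_2$ in a ball of radius $R$ in $H^1$ one has the pointwise bound $|f(u_1)-f(u_2)|\lesssim (|u_1|^{p-1}+|u_2|^{p-1})|u_1-u_2|$, hence $\|f(u_1)-f(u_2)\|_{L^2}\lesssim R^{p-1}\|u_1-u_2\|_{L^2}\leq R^{p-1}\|u_1-u_2\|_{H^1}$, so $F$ is Lipschitz on bounded sets of $\mathcal{H}$ with constant $L(R)\lesssim R^{p-1}$.

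For part (1), fix $U_0\in\mathcal{H}$, set $R=2\sup_{t\in[0,1]}\|e^{t\mathcal{A}}U_0\|_{\mathcal{H}}$ (finite by strong continuity and uniform boundedness of the semigroup on compact time intervals), and define the map $\Phi(U)(t)=e^{t\mathcal{A}}U_0+\int_0^t e^{(t-s)\mathcal{A}}F(U(s))\,ds$ on the complete metric space $X_{T,R}=\{U\in C([0,T],\mathcal{H}):\sup_{[0,T]}\|U(t)\|_{\mathcal{H}}\leq R\}$. Choosing $T=T(R)>0$ small enough that $CT\cdot\sup_{\|U\|\le R}\|F(U)\|_{\mathcal{H}}\leq R/2$ and $CT\cdot L(R)<1$ (where $C=\sup_{0\le t\le 1}\|e^{t\mathcal{A}}\|_{\mathcal{L}(\mathcal{H})}$), $\Phi$ maps $X_{T,R}$ into itself and is a contraction, giving a unique local mild solution. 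A standard continuation argument then produces a maximal solution on $[0,T(U_0))$, and uniqueness on the whole interval follows from a Gronwall argument applied to the difference of two solutions. Part (2), the blow-up alternative, is the usual contrapositive: if $\limsup_{t\to T(U_0)-}\|U(t)\|_{\mathcal{H}}=M<\infty$ then, since the local existence time $T(R)$ above depends only on the size $R$ (via the semigroup bound $C$ and $L(R)\lesssim R^{p-1}$), one could restart the solution from times close to $T(U_0)$ and extend it past $T(U_0)$, a contradiction.

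For part (3), lower semicontinuity of $T$ and continuous dependence: fix $T<T(U_0)$, so $\sup_{[0,T]}\|U(t)\|_{\mathcal{H}}=:M<\infty$. For $U_{0,n}\to U_0$, write $U_n(t)-U(t)=e^{t\mathcal{A}}(U_{0,n}-U_0)+\int_0^t e^{(t-s)\mathcal{A}}(F(U_n(s))-F(U(s)))\,ds$, so on the interval where $\|U_n\|_{\mathcal{H}}\leq 2M$ one gets $\|U_n(t)-U(t)\|_{\mathcal{H}}\leq C\|U_{0,n}-U_0\|_{\mathcal{H}}+CL(2M)\int_0^t\|U_n(s)-U(s)\|_{\mathcal{H}}\,ds$; Gronwall yields $\sup_{[0,T]}\|U_n-U\|_{\mathcal{H}}\leq C e^{CL(2M)T}\|U_{0,n}-U_0\|_{\mathcal{H}}\to 0$, and a bootstrap/continuity argument shows that for $n$ large the bound $\|U_n\|_{\mathcal{H}}\leq 2M$ indeed persists on all of $[0,T]$, hence $T(U_{0,n})>T$; since $T<T(U_0)$ was arbitrary this gives $\liminf_n T(U_{0,n})\geq T(U_0)$.

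The argument is essentially routine; the only point requiring a little care — and the main "obstacle" such as it is — is verifying that the local existence time depends on the initial data only through its $\mathcal{H}$-norm (and that $F$ is locally Lipschitz $\mathcal{H}\to\mathcal{H}$, which rests squarely on $H^1(\mathbb{R})\hookrightarrow L^\infty$), since this uniformity is exactly what makes parts (2) and (3) work. The presence of the delta potential is invisible here: it has been absorbed entirely into the generator $\mathcal{A}$ and its domain $\mathcal{D}$ in \eqref{Adomain}, and plays no further role once Propositions \ref{HY} and \ref{lineares} are in hand. I would therefore simply carry out the fixed-point scheme above and refer the reader to \cite{CH} for the details identical to the classical case.
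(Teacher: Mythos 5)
Your proposal is correct and follows essentially the same route as the paper: both reduce the statement to the standard Cazenave--Haraux fixed-point scheme for the mild formulation, with the only equation-specific inputs being the semigroup bound \eqref{le} and the local Lipschitz estimate \eqref{Fes} for $F$ on $\mathcal{H}$ via $H^1(\mathbb{R})\hookrightarrow L^\infty(\mathbb{R})$. The paper simply cites \cite{CGOR} and \cite{CH} for the details that you spell out explicitly.
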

We prove Proposition \ref{lwp} in the same argument as \cite{CGOR}. To show the same as \cite{CGOR}, we only need to see the nonlinearity $F$ and the continuous semigroup $(e^{t\mathcal{A}})_{t\geq 0}$.

\begin{proof}
To apply the arguments as \cite[Section 2]{CGOR}, we estimate the nonlinearity $F$ and the continuous semigroup $(e^{t\mathcal{A}})_{t\geq 0}$.

First, we estimate $F$. we note that 
\begin{align}
|f(u)-f(v)|\lesssim (1+|u|^{p-1}+|v|^{p-1})|u-v|,
\end{align}
and so for any $R>0$, there is a constant $L(R)$ such that, for any $U,V\in \mathcal{H}$ with $\|U\|_{\mathcal{H}}<R$ and $\|V\|_{\mathcal{H}}<R$, we have 
\begin{align}\label{Fes}
\|F(U)-F(V)\|_{\mathcal{H}}\leq L(R)\|U-V\|_{\mathcal{H}}.
\end{align}

Second we estimate $(e^{t\mathcal{A}})_{t\geq 0}$. This has been estimated by \eqref{le}. Thus by \eqref{le} and \eqref{Fes} we complete the proof to apply \cite[Section 2]{CGOR}.

\end{proof}

\subsection{Uniform bound on global solutions} 
By subsection 2.2, we get the local well-posedness of \eqref{DNKG}. In this subsection, we get a uniform bound for global solutions of \eqref{DNKG}. Essentially, this proof is the same as \cite[Theorem 2.2]{CMY}. To prove the following theorem, we only need to note the effect of a delta potential.
\begin{theorem}\label{ge}
There exists a function $\mathcal{F}:[0,\infty)\to [0,\infty)$ such that $\mathcal{F}(0)=0$ and for any global solution $\vec{u}$ of \eqref{DNKG}, we have
\begin{align*}
\sup_{t\in [0,\infty)} \|\vec{u}(t)\|_{\mathcal{H}}\leq \mathcal{F}(\|\vec{u}(0)\|_{\mathcal{H}}).
\end{align*}
\end{theorem}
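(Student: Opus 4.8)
The plan is to mimic the a~priori bound in \cite[Theorem 2.2]{CMY}, which for the $\gamma=0$ problem proceeds by a contradiction/concentration argument built on the energy dissipation identity \eqref{energydecay}, and to check that the delta potential term $-\gamma|u(0)|^2$ in $E_\gamma$ does not disrupt it. The key point that makes everything go through is Lemma \ref{deltaequivalent}: the modified $H^1$-norm $\|u\|_{H^1}^2-\gamma|u(0)|^2$ is equivalent to $\|u\|_{H^1}^2$ uniformly for $\gamma<2$, so $E_\gamma$ is, up to harmless constants, the same functional as the $\gamma=0$ energy, and in particular it is bounded below on bounded sets of $\mathcal H$ and coercive in the appropriate weak sense near $0$.

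First I would reduce to a statement about sequences: suppose the conclusion fails, i.e. there is no such $\mathcal F$. Since local well-posedness (Proposition \ref{lwp}) and the blow-up alternative already give continuous dependence, the failure must be of the form: there is $R_0>0$, a sequence of global solutions $\vec u_n$ with $\|\vec u_n(0)\|_{\mathcal H}\le R_0$ bounded, and times $t_n$ with $\|\vec u_n(t_n)\|_{\mathcal H}\to\infty$ (the case $\|\vec u_n(0)\|_{\mathcal H}\to 0$ forcing $\sup\|\vec u_n(t)\|_{\mathcal H}\not\to 0$ is handled the same way and in fact gives $\mathcal F(0)=0$, using that $E_\gamma(\vec u)\to 0$ as $\vec u\to 0$ in $\mathcal H$ together with coercivity of $E_\gamma$ on a small ball). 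By \eqref{energydecay} the energy is nonincreasing, so $E_\gamma(\vec u_n(t))\le E_\gamma(\vec u_n(0))$, and the latter is bounded above by a constant depending only on $R_0$ (continuity of $E_\gamma$, again via Lemma \ref{deltaequivalent} and Sobolev $H^1\hookrightarrow L^{p+1}$). Then one runs the standard argument: at the first time $\tau_n$ where $\|\vec u_n(\tau_n)\|_{\mathcal H}$ reaches a large threshold $M$, the solution has bounded energy but large norm, which forces the potential energy $\tfrac1{p+1}\|u_n(\tau_n)\|_{L^{p+1}}^{p+1}$ to be large (of order $M^2$), hence by Gagliardo–Nirenberg/Sobolev the profile must concentrate; a variational/scaling analysis at the concentration scale, combined with the dissipation $2\alpha\int_0^\infty\|\partial_t u_n\|_{L^2}^2\,dt\le E_\gamma(\vec u_n(0))-\inf E_\gamma$ being uniformly bounded, produces a contradiction exactly as in \cite{CMY}. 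The delta term only contributes the extra piece $-\gamma|u_n(0)|^2$, which by the trace inequality $|u(0)|^2\le\|u\|_{L^2}\|u'\|_{L^2}\le\|u\|_{H^1}^2$ is controlled by (a fraction of) $\|u_n\|_{H^1}^2$ and is absorbed — this is precisely the content of \eqref{deqes}.

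The main obstacle, and the only place real work is needed beyond bookkeeping, is ensuring that the concentration-compactness / profile-decomposition machinery of \cite{CMY} is insensitive to the singular coefficient $\gamma\delta_0$. Concretely: in a profile decomposition of $u_n(\tau_n)$ in $H^1(\mathbb R)$, profiles that translate off to spatial infinity do not see the potential at all (so their contribution to $E_\gamma$ is asymptotically the $\gamma=0$ energy of a translate of some fixed profile), while at most one profile stays near $x=0$ and for that one the term $-\gamma|u(0)|^2$ is a bounded perturbation by Lemma \ref{deltaequivalent}; the cross terms $|u_n(0)|^2$ split according to which profiles are localized at the origin, with the rest vanishing. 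One must also check the delta term behaves well under the rescaling used to resolve concentration, but since $|u(0)|^2$ scales like $\|u\|_{L^2}\|u'\|_{L^2}$ it is lower order relative to $\|u'\|_{L^2}^2$ at small scales and hence negligible in the concentrated-profile limit. Granting these adaptations, the contradiction closes and the uniform bound $\mathcal F$ — depending only on $\|\vec u(0)\|_{\mathcal H}$ and with $\mathcal F(0)=0$ — follows.
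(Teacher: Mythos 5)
There is a genuine gap: your proposal misidentifies the mechanism behind \cite[Theorem 2.2]{CMY} and the paper's Theorem \ref{ge}. That argument is not a concentration-compactness/profile-decomposition argument; it is a quantitative virial (Nakao-type) argument. One introduces $M(t)=\tfrac12\|u(t)\|_{L^2}^2+\alpha\int_0^t\|u(s)\|_{L^2}^2\,ds$ and $W(t)=\tfrac12\|\vec u(t)\|_{\mathcal H}^2-\tfrac{\gamma}{2}|u(t,0)|^2$, computes $M''$ from the equation, and (i) rules out $M'(t)\to\infty$ by showing that $(1+\epsilon)|M'|^2<M''M$ would force $M^{-\epsilon}$ to be concave and hence to vanish in finite time, contradicting globality; (ii) derives $\sup_t|M'|\lesssim |E(0)|+|M'(0)|$ from the differential inequality $M''\ge c_4|M'|-(p+1)E(0)$; (iii) converts this into time-averaged bounds $\int_t^{t+\tau}W\lesssim 1$ and then pointwise bounds on $W$ via $W'\lesssim\|u\|_{L^{2p}}^{2p}$ and a Gronwall-type integration in $\tau$. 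None of this appears in your sketch. The step you do describe --- ``bounded energy but large norm forces $\|u\|_{L^{p+1}}^{p+1}$ to be large, hence the profile must concentrate, hence a contradiction'' --- does not close on its own: in one dimension there is no scaling criticality in $H^1$, $E_\gamma$ is not coercive (it is unbounded below, e.g.\ on $\lambda Q$ as $\lambda\to\infty$), so configurations with bounded energy and arbitrarily large $\mathcal H$-norm exist in abundance, and excluding them along a trajectory genuinely requires the second-order (virial) information from the equation, not just the energy identity and a profile decomposition.

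A second, related problem is circularity in your use of the dissipation: you invoke $2\alpha\int_0^\infty\|\partial_t u_n\|_{L^2}^2\,dt\le E_\gamma(\vec u_n(0))-\inf E_\gamma$ as ``uniformly bounded,'' but $\inf_{\mathcal H}E_\gamma=-\infty$, and the finiteness of the dissipation integral is equivalent to a lower bound on $E_\gamma(\vec u(t))$ along the trajectory, which is a consequence of the uniform bound you are trying to prove, not an input to it. What you do get right is the peripheral point: Lemma \ref{deltaequivalent} (equivalence of $\|u\|_{H^1}^2-\gamma|u(0)|^2$ with $\|u\|_{H^1}^2$ for $\gamma<2$) is exactly how the paper absorbs the delta term into the CMY scheme, via \eqref{douchi}. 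But the core of the proof --- the functionals $M$, $W$ and the three steps above --- is missing and cannot be replaced by the concentration argument as you describe it.
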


\begin{proof}
Let $\vec{u}$ be a global solution of \eqref{DNKG}. We define $E,M,W: [0,\infty)\to \mathbb{R}$ as
\begin{align*}
E(t)&=E_{\gamma}(\vec{u}(t)),
\\
M(t)&=\frac{1}{2}\|u(t)\|_{L^2}^2+\alpha \int_0^t\|u(s)\|_{L^2}^2ds,
\\
W(t)&=\frac{1}{2}\|\vec{u}(t)\|_{\mathcal{H}}^2-\frac{\gamma}{2}|u(t,0)|^2.
\end{align*}
By \eqref{le} we note that for all $(u,v)\in \mathcal{H}$
\begin{align}\label{douchi}
\|u\|_{H^1}^2+\|v\|_{L^2}^2 \sim \|u\|_{H^1}^2+\|v\|_{L^2}^2-\gamma|u(0)|^2.
\end{align} 
Then by direct computations, we have
\begin{align}
M^{\prime}(t)&=\int u(t){\partial}_tu(t)dx+\alpha\|u(t)\|_{L^2}^2 \label{Mp1}
\\
&=\int u(t){\partial}_tu(t)dx+2\alpha\int_0^t\int u(s){\partial}_tu(s)dxds+\alpha\|u(0)\|_{L^2}^2, \label{Mp2} 
\\
M^{\prime \prime}(t)&=\|{\partial}_tu(t)\|_{L^2}^2-\left(\|u(t)\|_{H^1}^2-\gamma|u(t,0)|^2\right)+\|u(t)\|_{L^{p+1}}^{p+1} \label{Mpp1}
\\
&=\frac{p+3}{2}\|{\partial}_tu(t)\|_{L^2}^2+\frac{p-1}{2}\left(\|u(t)\|_{H^1}^2-\gamma|u(t,0)|^2\right)-(p+1)E(t), \label{Mpp2}
\\
W^{\prime}(t)&=-2\alpha \|{\partial}_tu(t)\|_{L^2}^2+\int f(u(t)){\partial}_tu(t)dx. \label{W1}
\end{align}
Furthermore by \eqref{douchi}, \eqref{Mp1}, and the Cauchy-Schwarz inequality, there exists $c_1>0$ such that for all $t\geq 0$
\begin{align}\label{Mpes1}
|M^{\prime}(t)|\leq c_1 W(t).
\end{align}
Moreover by \eqref{energydecay} and \eqref{Mpp2}, there exists $c_2>0$ such that for all $t\geq 0$
\begin{align}\label{Mppes1}
M^{\prime \prime}(t)\geq c_2W(t)-(p+1)E(0).
\end{align}
In addition by \eqref{Mp2} and the Cauchy-Schwarz inequality,
\begin{align*}
|M^{\prime}(t)|&\leq \|u(t)\|_{L^2}\|{\partial}_tu(t)\|_{L^2}
\\
&\quad +2\alpha \left( \int_0^t \|u(s)\|_{L^2}^2ds\right)^{\frac{1}{2}}\left( \int_0^t\|{\partial}_tu(s)\|_{L^2}^2ds\right)^{\frac{1}{2}}
\\
&\quad +\alpha\|u(0)\|_{L^2}^2.
\end{align*}
Let $\epsilon>0$ to be chosen later, we estimate
\begin{align*}
|M^{\prime}|^2&\leq (1+\epsilon)\left[  \|u\|_{L^2}\|{\partial}_tu\|_{L^2}+2\alpha \left( \int_0^t \|u(s)\|_{L^2}^2ds\right)^{\frac{1}{2}}\left( \int_0^t\|{\partial}_tu(s)\|_{L^2}^2ds\right)^{\frac{1}{2}} \right]^2
\\
&\quad +\left(1+\frac{1}{\epsilon}\right)\alpha^2\|u(0)\|_{L^2}^4,
\end{align*}
because for $x>0, y>0,\epsilon>0$ the inequality $x+y\leq \sqrt{(a+\epsilon)x^2+(1+\frac{1}{\epsilon})y^2}$ holds. By using the Cauchy-Schwarz inequality again, we have
\begin{align}\label{Mode}
\begin{split}
|M^{\prime}(t)|^2&\leq (1+\epsilon)M(t)\left[ 2\|{\partial}_tu(t)\|_{L^2}^2+4\alpha\int_0^t\|{\partial}_tu(s)\|_{L^2}^2ds \right]
\\
&\quad +\left( 1+\frac{1}{\epsilon}\right)\alpha^2\|u(0)\|_{L^2}^4.
\end{split}
\end{align}
Furthermore by \eqref{energydecay} and \eqref{Mpp2}, 
\begin{align*}\label{Mpp3}
\begin{split}
M^{\prime \prime}(t)&=2\|{\partial}_tu(t)\|_{L^2}^2+(p-1)W(t)
\\
&\quad +2\alpha (p+1)\int_0^t\|{\partial}_tu(s)\|_{L^2}^2ds-(p+1)E(0).
\end{split}
\end{align*}
Now we fix $\epsilon$ such that 
\begin{align}
0<\epsilon<\left( \frac{p+7}{8}\right)^{\frac{1}{3}}-1 .
\end{align}
Then $M^{\prime \prime}$ satisfies
\begin{align}\label{Mppes2}
\begin{split}
M^{\prime \prime}(t)&\geq (1+\epsilon)^3\left[ 2\|{\partial}_tu(t)\|_{L^2}^2+4\alpha\int_0^t\|{\partial}_tu(s)\|_{L^2}^2ds \right]
\\
&\quad +\frac{p-1}{2}W(t)-(p+1)E(0).
\end{split}
\end{align}
We assume $\lim_{t\to \infty}M^{\prime}(t)=\infty$. Then by \eqref{Mode} and \eqref{Mppes2},
for all $t$ large enough, 
\begin{align}\label{Mode2}
(1+\epsilon)|M^{\prime}(t)|^2<M^{\prime \prime}(t)M(t). 
\end{align}
This inequality \eqref{Mode2} implies $\frac{d^2}{dt^2}\left[ M^{-\epsilon}(t)\right]<0$, and there exists $t_1>0$ such that  $\frac{d}{dt}\left[ M^{-\epsilon}(t_1)\right]<0$ and for all $t\geq t_1$,
\begin{align}
0\leq M^{-\epsilon}(t)\leq M^{-\epsilon}(t_1)+(t-t_1)\frac{d}{dt}\left[ M^{-\epsilon}(t_1)\right],
\end{align}
which is a contradiction. Therefore we obtain 
\begin{align}\label{Mpinf}
\liminf_{t\to \infty} M^{\prime}(t)<\infty.
\end{align}
Next we prove that there exists $C_3>0$ such that 
\begin{align}\label{Mpsup}
\sup_{t\in [0,\infty)}|M^{\prime}(t)|< C_3\left(|E(0)|+|M^{\prime}(0)|\right).
\end{align}
By \eqref{Mpes1} and \eqref{Mppes1}, we obtain 
\begin{align}
M^{\prime \prime}(t)\geq c_4|M^{\prime}(t)|-(p+1)E(0),
\end{align}
where $c_4=\frac{c_2}{c_1}$. Thus the following two inequalities hold :
\begin{align*}
\frac{d}{dt}\left[ c_4M^{\prime}(t)-(p+1)E(0)\right]\leq c_4\left[ c_4M^{\prime}(t)-(p+1)E(0)\right],
\\
\frac{d}{dt}\left[ -c_4M^{\prime}(t)+(p+1)E(0)\right]\leq -c_4\left[ -c_4M^{\prime}(t)+(p+1)E(0)\right].
\end{align*}
By \eqref{Mpinf}, we obtain \eqref{Mpsup}. In detail, see \cite[Theorem 2.2]{CMY}. 

\noindent Last, we prove the global bound
\begin{align}\label{Wb}
\sup_{t\in [0,\infty)} W(t)<\infty.
\end{align}
By integrating \eqref{Mppes1} on $(t,t+\tau)$,  where $\tau \in (0,1]$, there exists $C_5>0$ such that for all $t\geq 0$ and $0<\tau\leq 1$ 
\begin{align}\label{Wes2}
\int_t^{t+\tau}W(s)ds<C_5(|E(0)|+|M^{\prime}(0)|).
\end{align}
We note that $C_5$ is independent of $\tau$. By \eqref{W1}, there exists $C_6>0$ such that for all $t\geq0$
\begin{align*}
W^{\prime}(t)\leq -2\alpha\|{\partial}_tu(t)\|_{L^2}^2+\int|u(t)|^p|{\partial}_tu(t)|dx\leq C_6\|u(t)\|_{L^{2p}}^{2p}.
\end{align*}
For $\tau \in (0,1)$ and $t\geq \tau$, integrating on $(t-\tau ,t)$, we have
\begin{align*}
W(t)\leq W(t-\tau)+C_6\left(\int_{t-\tau}^t\|u(s)\|_{L^{2p}}^{2p}ds\right).
\end{align*}
Using \eqref{douchi}, \eqref{Wes2}, and the Sobolev inequality, there exists $C_7>0$ such that for all $0<\tau\leq 1$ and $t\geq 1$
\begin{align}\label{Wes3}
W(t)\leq W(t-\tau)+C_7\left( \int_{t-\tau}^tW(s)ds \right)^p.
\end{align}
If $0\leq t\leq 1$, by observing \eqref{Wes3} with $t=\tau$, we have
\begin{align}\label{Wes4}
W(t)\leq W(0)+C_7\left( \int_{0}^tW(s)ds \right)^p\leq W(0)+C_7\left\{C_5(|E(0)|+|M^{\prime}(0)|)\right\}^p.
\end{align} 
If $t\geq 1$, by integrating \eqref{Wes3} on $\tau\in (0,1)$, we have
\begin{align}\label{Wes5}
\begin{aligned}
W(t)&\leq \int_0^1W(t-\tau)d\tau+C_7\left\{C_5(|E(0)|+|M^{\prime}(0)|)\right\}^p
\\
&\leq C_5(|E(0)|+|M^{\prime}(0)|)+C_7\left\{C_5(|E(0)|+|M^{\prime}(0)|)\right\}^p.
\end{aligned}
\end{align}
Thus we obtain \eqref{Wb}. In particular, by \eqref{Wes4} and \eqref{Wes5} $W$ has an upper bound which depends only on $\|\vec{u}(0)\|_{\mathcal{H}}$, and by \eqref{douchi} we complete the proof.

\end{proof}

\section{Soliton resolution}

In this section,  we show Theorem \ref{sr}. Before proving Theorem \ref{sr}, we introduce the concentration-compactness argument.
\begin{proposition}\label{ssr}
Let $(u_n)_n\subset H^1$ be a bounded sequence satisfying
\begin{align}\label{wca}
\lim_{n\to \infty}\|-{\partial}_x^2u_n+u_n-\gamma{\delta}_0(x)u_n-|u_n|^{p-1}u_n\|_{H^{-1}}=0.
\end{align} 
Then there exist a subsequence $(v_n)_n\subset (u_n)_n$,  $K\geq 0$, signs $\sigma=0,\pm 1, \sigma_k=\pm 1$ for any $k\in \{1,2,\cdots,K\}$, and a sequence $(\xi_{k,n})_{k\in\{1,2,\cdots,K\}}\subset {\mathbb{R}}^K$ such that
\begin{align*}
\lim_{n\to \infty}& \|v_n-\sigma Q_{\gamma}-\sum_{k=1}^K{\sigma}_kQ(\cdot-\xi_{k,n})\|_{H^1}=0,
\\
\lim_{n\to \infty}&(\xi_{k+1,n}-\xi_{k,n})=\infty,\ \mbox{for}\ k=1,\cdots,K-1,
\\
\lim_{n\to \infty}&|\xi_{k,n}|=\infty\ \mbox{for}\ k=1,\cdots,K. 
\end{align*}
\end{proposition}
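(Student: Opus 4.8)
The plan is to run the standard profile-decomposition (concentration-compactness) machinery on the sequence $(u_n)$, using the bound in $H^1$ together with the ``almost critical point'' condition \eqref{wca}, and then to identify each profile as $0$, as $\pm Q_\gamma$, or as a translated $\pm Q$, with the translations escaping to infinity. First I would extract, along a subsequence, a profile decomposition: there exist profiles $\phi^{(j)} \in H^1$ and translation parameters $(\xi_{j,n})_n \subset \mathbb{R}$ such that, for each $J$,
\begin{align*}
u_n = \sum_{j=0}^{J} \phi^{(j)}(\cdot - \xi_{j,n}) + r_n^{J},
\end{align*}
where for $j \neq j'$ the parameters are asymptotically orthogonal ($|\xi_{j,n} - \xi_{j',n}| \to \infty$), and the remainder $r_n^J$ is small in $L^{p+1}$ (in fact in every $L^q$, $2<q<\infty$) as first $n\to\infty$ then $J\to\infty$. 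The key decision in setting up the decomposition is whether a given translation sequence stays bounded or escapes to infinity; I would organize it so that at most one profile, say $\phi^{(0)}$, sits at a bounded location (i.e. $\xi_{0,n}$ bounded, so WLOG $\xi_{0,n}\equiv 0$) while all the others have $|\xi_{j,n}|\to\infty$. This is exactly the place where the delta potential enters: $-\partial_x^2 + 1 - \gamma\delta_0$ is translation invariant only away from the origin, so a profile sitting near the origin ``sees'' the potential and a profile far away does not.

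Next I would pass the equation \eqref{wca} to each profile. For the bounded profile $\phi^{(0)}$, testing \eqref{wca} against translates of a fixed test function and using the $L^q$-smallness of the remainder together with the Brezis--Lieb-type splitting of the nonlinear term shows that $\phi^{(0)}$ solves the stationary equation $-\phi'' + \phi - \gamma\delta_0\phi - |\phi|^{p-1}\phi = 0$ on $\mathbb{R}$ (interpreted with the jump condition in \eqref{Adomain}); by the classification recalled in the introduction this forces $\phi^{(0)} \in \{0, Q_\gamma, -Q_\gamma\}$, which gives the sign $\sigma \in \{0,\pm1\}$. For each escaping profile $\phi^{(j)}$, $j\geq 1$, the translated potential $\gamma\delta_0(\cdot + \xi_{j,n})$ converges weakly to $0$, so $\phi^{(j)}$ solves the unperturbed equation $-\phi'' + \phi - |\phi|^{p-1}\phi = 0$; the only $H^1$ solutions are $0$ and $\pm Q(\cdot - y)$, and absorbing the shift $y$ into $\xi_{j,n}$ we get $\phi^{(j)} = \sigma_j Q$ with $\sigma_j = \pm 1$. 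Relabeling the indices of the nonzero escaping profiles as $k=1,\dots,K$ and ordering them so that $\xi_{1,n} < \xi_{2,n} < \cdots < \xi_{K,n}$ yields $\xi_{k+1,n}-\xi_{k,n}\to\infty$; the orthogonality with $\phi^{(0)}$ (located at the origin) gives $|\xi_{k,n}|\to\infty$ for each $k$.

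Finally I would show the decomposition has no remainder, i.e. $r_n^J \to 0$ in $H^1$ and only finitely many profiles are nonzero. Each nonzero profile carries a fixed amount of energy/Sobolev norm bounded below (for $Q$ this is its $H^1$ norm; for $Q_\gamma$ likewise), and by almost-orthogonality these add up inside $\liminf_n\|u_n\|_{H^1}^2 < \infty$, so $K<\infty$. For the $H^1$-smallness of the remainder one plugs the decomposition back into \eqref{wca}: the nonlinear terms nearly decouple by orthogonality and by the elliptic equations satisfied by the profiles, leaving $\|(-\partial_x^2+1)^{1/2} r_n^J\|_{L^2}^2 \lesssim \|r_n^J\|_{L^{p+1}}^{p+1} + o_n(1)$, and since $\|r_n^J\|_{L^{p+1}}\to 0$ as $J\to\infty$ (uniformly in large $n$), a diagonal extraction finishes it. The main obstacle I anticipate is handling the delta potential cleanly in the profile decomposition --- specifically, verifying that at most one profile concentrates at the origin and that the weak limit of the translated $\delta_0$ is indeed $0$ for the escaping profiles (this needs the $H^1 \hookrightarrow L^\infty_{loc}$ trace at the origin to behave well under the decomposition), and making sure the Brezis--Lieb splitting of $|u_n|^{p-1}u_n$ in $H^{-1}$ is compatible with the extra singular term. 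Everything else is the by-now-standard concentration-compactness bookkeeping, following the arguments of \cite{CMY} adapted to the potential.
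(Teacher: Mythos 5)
Your proposal is correct, and the heart of it --- identifying each weak limit as a solution of the limiting elliptic equation, with the delta potential surviving only for the profile whose translation stays bounded (forcing $\pm Q_\gamma$ or $0$) and disappearing for escaping profiles (forcing $\pm Q$) --- is exactly what the paper does. The difference is in the concentration-compactness bookkeeping. The paper follows Lions' iterative extraction: pull out one profile at a time, use the Brezis--Lieb lemma to show that each extraction removes at least $\min\bigl(\|Q\|_{L^{p+1}}^{p+1},\|Q_\gamma\|_{L^{p+1}}^{p+1}\bigr)$ from the $L^{p+1}$ mass, verify that the remainder still satisfies the hypothesis \eqref{wca}, and iterate until the residual $L^{p+1}$ mass is zero. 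You instead set up a one-shot Bahouri--G\'erard-type profile decomposition, get finiteness of $K$ from almost-orthogonality of the $H^1$ norms, and obtain $H^1$-smallness of the remainder by pairing the equation with $r_n^J$ itself. Both routes work; yours front-loads the decomposition and defers the equation to the identification and remainder steps, while the paper's interleaves them, which keeps each step elementary (weak convergence plus Brezis--Lieb) at the cost of having to re-verify \eqref{wca} for the residual after every extraction. One small point you should make explicit in your plug-back step: the pairing $\langle -\gamma\delta_0 r_n^J, r_n^J\rangle = -\gamma|r_n^J(0)|^2$ need not have a favorable sign when $0<\gamma<2$, so you must invoke the equivalence of quadratic forms in Lemma \ref{deltaequivalent} to conclude that $\|r_n^J\|_{H^1}^2-\gamma|r_n^J(0)|^2$ controls $\|r_n^J\|_{H^1}^2$; with that, your argument closes.
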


Concentration-compactness arguments are well-known. In fact, when $\gamma=0$, Proposition \ref{ssr} follows from \cite[Appendix A]{L} and \cite[Theorem~III.4]{L2}. 
To prove Proposition \ref{ssr}, we use the same arguments as them. We only need to observe the effect of a potential.

We denote $\langle \cdot, \cdot\rangle$ the $L^2$ scalar product.

\begin{proof}
Since $(u_n)_n$ is bounded in $H^1$, there exist $(\tilde{u}_n)_n\subset (u_n)_n$, $(\xi_n)_n\subset \mathbb{R}$, $\tilde{u}\in H^1(\mathbb{R})$, $c\geq 0$ and $\xi\in \{-\infty,0,\infty\}$ such that 
\begin{align*}
\lim_{n\to \infty}\|\tilde{u}_n\|_{L^{p+1}}^{p+1}=c,
\\
\tilde{u}_n(\cdot-\xi_n)\rightharpoonup \tilde{u}\ \mbox{in}\ H^1(\mathbb{R}),
\\
\lim_{n\to \infty}\xi_n=\xi .
\end{align*}
If $c=0$, $\tilde{u}_n$ converges to $0$ as $n\to \infty$. We assume $c>0$.

First we consider $\xi=0$. Then $\tilde{u}$ satisfies
\begin{align}\label{gode}
-{\partial}_x^2\tilde{u}+\tilde{u}-\gamma\delta_0\tilde{u}-|\tilde{u}|^{p-1}\tilde{u}=0.
\end{align}
Therefore if $|\gamma|\geq 2$ there does not exist a solution of \eqref{gode} and if $|\gamma|<2$  $\tilde{u}$ satisfies $\tilde{u}=\sigma Q_{\gamma}$, where $\sigma=\pm1$. We define $w_n=\tilde{u}_n-\sigma Q_{\gamma}$. Then we have as $n\to\infty$
\begin{align*}
w_n\rightharpoonup 0\ \mbox{in}\ H^1(\mathbb{R}).
\end{align*}
Furthermore by the Brezis–Lieb lemma, we have
\begin{align*}
\lim_{n\to \infty} \|w_n\|_{L^{p+1}}^{p+1}=c-\|Q_{\gamma}\|_{L^{p+1}}^{p+1}.
\end{align*} 
For every $\varphi\in H^1(\mathbb{R})$, we have
\begin{align*}
\langle -{\partial}_x^2w_n+w_n-\gamma \delta_0w_n-f(w_n),\varphi\rangle&=\langle -{\partial}_x^2\tilde{u}_n+\tilde{u}_n-\gamma \delta_0\tilde{u}_n-f(\tilde{u}_n),\varphi \rangle
\\
&\quad-\langle f(w_n)-f(\tilde{u}_n)+\sigma f(Q_\gamma),\varphi\rangle.
\end{align*}
Since $f(w_n)-f(\tilde{u}_n)+\sigma f(Q_\gamma)$ satisfies
\begin{align*}
|f(w_n)-f(\tilde{u}_n)+\sigma f(Q_\gamma)|&\lesssim e^{-|x|}(|w_n|^{p-1}+|w_n|),
\end{align*}
we have $\lim_{n\to \infty}\langle -{\partial}_x^2w_n+w_n-\gamma \delta_0w_n-f(w_n),\varphi\rangle=0$. Therefore we obtain 
\begin{align*}
\lim_{n\to \infty}\|-{\partial}_x^2w_n+w_n-\gamma \delta_0w_n-f(w_n)\|_{H^{-1}}=0.
\end{align*}
Second we consider $\xi=\pm \infty$. Then $\tilde{u}$ satisfies 
\begin{align*}
-{\partial}_x^2\tilde{u}+\tilde{u}-f(\tilde{u})=0.
\end{align*}
Therefore we may assume that $\tilde{u}=\sigma Q$, where $\sigma=\pm1$. Then we define $Q_n=\sigma Q(\cdot+\xi_n)$ and $w^{\prime}_n=\tilde{u}_n-Q_n$. Then by the Brezis–Lieb lemma, we have
\begin{align*}
\lim_{n\to \infty}\|w^{\prime}_n\|_{L^{p+1}}^{p+1}=c-\|Q\|_{L^{p+1}}^{p+1}.
\end{align*}
Now we fix $\varphi \in H^1(\mathbb{R})$. Then we have
\begin{align*}
\langle -{\partial}_x^2w^{\prime}_n+w^{\prime}_n-\gamma \delta_0w^{\prime}_n-f(w^{\prime}_n),\varphi\rangle&=\langle -{\partial}_x^2\tilde{u}_n+\tilde{u}_n-\gamma \delta_0\tilde{u}_n-f(\tilde{u}_n),\varphi\rangle
\\
&\quad+\langle \gamma\delta_0Q_n,\varphi\rangle
\\
&\quad-\langle f(w^{\prime}_n)-f(\tilde{u}_n)+f(Q_n),\varphi\rangle. 
\end{align*}
By direct computation, we have
\begin{align*}
| f(w^{\prime}_n)-f(\tilde{u}_n)+f(Q_n)|&\lesssim |Q_n||\tilde{u}_n|^{p-1}+|Q_n|^{p-1}|\tilde{u}_n|.
\end{align*}
Therefore for any $R>0$, we obtain 
\begin{align*}
|\langle f(w^{\prime}_n)-f(\tilde{u}_n)+f(Q_n),\varphi\rangle|\lesssim \|\varphi\|_{L^2(|x|>R)}+e^{-|x_n|}e^R\|\varphi\|_{H^1},
\end{align*}
which implies $\lim_{n\to \infty}\langle -{\partial}_x^2w^{\prime}_n+w^{\prime}_n-\gamma \delta_0w^{\prime}_n-f(w^{\prime}_n),\varphi\rangle=0$. Thus we obtain
\begin{align*}
\lim_{n\to \infty}\|-{\partial}_x^2w^{\prime}_n+w^{\prime}_n-\gamma \delta_0w^{\prime}_n-f(w^{\prime}_n)\|_{H^{-1}}=0.
\end{align*}
Gathering these arguments, there exist $\mathcal{Q}\in \{\pm Q_{\gamma},\pm Q\}$ and $(\xi_n)_n\subset \mathbb{R}$ such that 
\begin{align*}
&\lim_{n\to \infty}\| \tilde{w}_n\|_{L^{p+1}}^{p+1}\leq \lim_{n\to \infty}\|\tilde{u}_n\|_{L^{p+1}}^{p+1}-\min{(\|Q\|_{L^{p+1}}^{p+1},\|Q_{\gamma}\|_{L^{p+1}}^{p+1})},
\\
&\lim_{n\to \infty}\|-{\partial}_x^2\tilde{w}_n+\tilde{w}_n-\gamma \delta_0\tilde{w}_n-f(\tilde{w}_n)\|_{H^{-1}}=0,
\end{align*}
where $\tilde{w}_n=\tilde{u}_n-\mathcal{Q}(\cdot+\xi_n)$. We note that if $|\gamma|\geq 2$, $\min{(\|Q\|_{L^{p+1}}^{p+1},\|Q_{\gamma}\|_{L^{p+1}}^{p+1})}=\|Q\|_{L^{p+1}}^{p+1}$. Therefore by repeatedly applying this argument, we complete the proof.

\end{proof}

Now we prove Theorem \ref{sr}. First we define 
\begin{align*}
E_{0,R}&=\mathbb{R},
\\
E_{K,R}&=\{ z\in{\mathbb{R}}^K: |z_k|>R\ \mbox{for}\ 1\leq k\leq K\ \mbox{and}\ z_{k+1}-z_k>R\ \mbox{for} \ 1\leq k\leq K-1\}, 
\end{align*}
where $K\geq 1$ and $R>0$. Furthermore we define
\begin{align*}
d_{K,R,\varsigma,\sigma}(u)&=\inf_{z\in E_{K,R}}\|u-\varsigma Q_{\gamma}-\sum_{k=1}^K{\sigma}_kQ(\cdot-z_k)\|_{H^1},
\\
\mathcal{N}_{K,R,\varsigma,\sigma,\epsilon}&=\{u\in H^1(\mathbb{R}): d_{K,R,\varsigma,\sigma}(u)<\epsilon \},
\end{align*}
where 
\begin{align*}
\varsigma \in \{0,\pm1\} ,\ \sigma=(\sigma_k)_{1\leq k\leq K} \in \{\pm1\}^K,\ K\geq 0,\ R>0,\ \epsilon>0.
\end{align*}

We note that when $R>0$ is large enough and $\epsilon>0$ is small enough, the family of sets $(\mathcal{N}_{K,R,\varsigma,\sigma,\epsilon})$ is disjoint.

\begin{proof}[{Proof of Theorem \ref{sr}}]
Let $\vec{u}$ be a global solution of \eqref{DNKG}. First, we prove 
\begin{align}
&\lim_{t\to \infty}\|{\partial}_tu(t)\|_{L^2}=0, \label{ptc}
\\
&\lim_{t\to \infty}\|-{\partial}_x^2u(t)+u(t)-\gamma{\delta}_0u(t)-|u(t)|^{p-1}u(t)\|_{H^{-1}}=0. \label{uh-1}
\end{align} 
Let
\begin{align*}
\vec{v}(t)=
\begin{pmatrix}
v(t)
\\
{\partial}_tv(t)
\end{pmatrix}
=
\begin{pmatrix}
{\partial}_tu(t)
\\
{\partial}_t^2u(t)
\end{pmatrix}
.
\end{align*}
By Proposition \ref{lineares}, we obtain
\begin{align}\label{duhameles}
\vec{v}(t)=e^{t\mathcal{A}}\vec{v}(0)+\int_0^t e^{(t-s)\mathcal{A}}
\begin{pmatrix}
0
\\
|u(s)|^{p-1}{\partial}_tu(s)
\end{pmatrix}
ds,
\end{align}
and for all $t\geq 0$,
\begin{align*}
\| e^{t\mathcal{A}}\vec{v}(0)\|_{L^2\times H^{-1}}&\lesssim e^{-\kappa t}.
\end{align*}
By \eqref{energydecay}, $v={\partial}_tu\in L^2([0,\infty)\times \mathbb{R})$, and by Theorem \ref{ge}, $u\in L^{\infty}([0,\infty),H^1)$. Therefore we have 
\begin{align*}
\| \int_0^{\frac{t}{2}}e^{(t-s)\mathcal{A}}\vec{w}(s)ds\|_{L^2\times H^{-1}}&\lesssim \int_0^{\frac{t}{2}} e^{-\kappa(t-s)}\|u\|_{L^{\infty}([0,\infty),H^1)}^{p-1}\|v(s)\|_{L^2}ds
\\
&\lesssim e^{-\frac{\kappa t}{2}} \|u\|_{L^{\infty}([0,\infty),H^1)}^{p-1}\|v\|_{L^2([0,\infty)\times \mathbb{R})},
\\
\| \int_{\frac{t}{2}}^{t}e^{(t-s)\mathcal{A}}\vec{w}(s)ds\|_{L^2\times H^{-1}}&\lesssim \int_{\frac{t}{2}}^t e^{-\kappa(t-s)}\|u\|_{L^{\infty}([0,\infty),H^1)}^{p-1}\|v(s)\|_{L^2}ds
\\
&\lesssim \|u\|_{L^{\infty}([0,\infty),H^1)}^{p-1}\|v\|_{L^2((\frac{t}{2},\infty)\times \mathbb{R})},
\end{align*}
where
\begin{align*}
\vec{w}(t)=
\begin{pmatrix}
0
\\
|u(t)|^{p-1}{\partial}_tu(t)
\end{pmatrix}
.
\end{align*}
Thus we obtain \eqref{ptc} and \eqref{uh-1}.

Then, $\left(u(n)\right)_n\subset H^1(\mathbb{R})$ is bounded and satisfies \eqref{wca}. Therefore by Proposition \ref{ssr}, there exist $\tilde{K}\geq0$, $(\tilde{t}_n)_n$,\ $\tilde{\varsigma}\in \{0,\pm1\}$, $\tilde{\sigma}=(\tilde{\sigma}_k)_{1\leq k\leq \tilde{K}}\in \{\pm1\}^{\tilde{K}}$ such that for any $R>0$,
\begin{align*}
\lim_{n\to \infty} d_{\tilde{K},R,\tilde{\varsigma},\tilde{\sigma}}(u(\tilde{t}_n))=0.
\end{align*}

Next we prove that for any sequence $t_n\to\infty$, $u$ satisfies 
\begin{align}\label{3uconv}
\lim_{n\to \infty} d_{\tilde{K},R,\tilde{\varsigma},\tilde{\sigma}}(u(t_n))=0.
\end{align}
We assume that there exist a sequence $t_n\to \infty$ and $R>0$  such that 
\begin{align*}
\limsup_{n\to \infty} d_{\tilde{K},R,\tilde{\varsigma},\tilde{\sigma}}(u(t_n))>0.
\end{align*}
Then there exist a subsequence $(t_n^{\prime})_n\subset (t_n)_n$ and sufficiently small $\epsilon>0$ such that for all $n\in\mathbb{N}$
\begin{align}\label{unotin}
u(t_n^{\prime})\notin \mathcal{N}_{\tilde{K},R,\tilde{\varsigma},\tilde{\sigma},\epsilon}.
\end{align}
Since $(u(t_n^{\prime}))_n$ is bounded and satisfies \eqref{wca}, by Proposition \ref{ssr} there exist a subsequence $(t_n^{\prime \prime})_n\subset (t_n^{\prime})_n$, $\hat{K},\ \hat{\varsigma}$, and $\hat{\sigma}$ such that 
\begin{align*}
\lim_{n\to\infty}d_{\hat{K},R,\hat{\varsigma},\hat{\sigma}}(u(t_n^{\prime \prime}))=0.
\end{align*}
We note that by \eqref{unotin}, $(\hat{K},\hat{\varsigma},\hat{\sigma})\neq (\tilde{K},\tilde{\varsigma},\tilde{\sigma})$ holds. Therefore when $n,m\in\mathbb{N}$ satisfy $u(t_n)\in \mathcal{N}_{\tilde{K},R,\tilde{\varsigma},\tilde{\sigma},\epsilon}$ and $u(t_m^{\prime})\in \mathcal{N}_{\hat{K},R,\hat{\varsigma},\hat{\sigma},\epsilon}$, there exists $s\in\mathbb{R}$ between $t_n$ and $t_m^{\prime}$ such that 
\begin{align*}
u(s)\notin\bigcup_{K\geq0,\varsigma\in\{0,\pm1\},\sigma\in \{\pm1\}^K} \mathcal{N}_{K,R,\varsigma,\sigma,\epsilon}.
\end{align*}
By this argument, there exists a sequence $s_n\to\infty$ such that for all $n\in\mathbb{N}$,
\begin{align}\label{unotin2}
u(s_n)\notin \bigcup_{K\geq0,\varsigma\in\{0,\pm1\},\sigma\in \{\pm1\}^K} \mathcal{N}_{K,R,\varsigma,\sigma,\epsilon}.
\end{align}
Then, $(u(s_n))_n$ is bounded and $u(s_n)$ satisfies \eqref{wca}. Therefore by Proposition \ref{ssr}, there exist a subsequence $(s_n^{\prime})_n\subset (s_n)_n$, $\dot{K}, \dot{\varsigma},\dot{\sigma}$ such that
\begin{align*}
\lim_{n\to\infty}d_{\dot{K},R,\dot{\varsigma},\dot{\sigma}}(u(s_n^{\prime}))=0,
\end{align*}
which contradicts \eqref{unotin2}. Therefore  for any $t_n\to\infty$, $u$ satisfies \eqref{3uconv}, and therefore we have
\begin{align*}
\lim_{t\to\infty}d_{\tilde{K},R,\tilde{\varsigma},\tilde{\sigma}}(u(t))=0.
\end{align*}
We prove Theorem \ref{sr}.  We define $z(t)=(z_k(t))\in\mathbb{R}^{\tilde{K}}$ as 
\begin{align*}
\|u(t)-\tilde{\varsigma}Q_{\gamma}-\sum_{k=1}^{\tilde{K}}\tilde{\sigma}_k Q(\cdot-z_k(t))\|_{H^1}=\inf_{z\in\mathbb{R}^{\tilde{K}}}\|u(t)-\tilde{\varsigma} Q_{\gamma}-\sum_{k=1}^{\tilde{K}}{\tilde{\sigma}}_kQ(\cdot-z_k)\|_{H^1}.
\end{align*}
We note that for any $R>0$, the minimum is attained for $z(t)$ satisfying $z(t)\in E_{R,\tilde{K}}$ when $t$ is large enough. Therefore we obtain \eqref{sr-1} with $K=\tilde{K},\ \sigma=\tilde{\varsigma}$, and $\sigma_k=\tilde{\sigma}_k$ for $1\leq k\leq \tilde{K}$, and we complete the proof.

\end{proof}

\section{Global dynamics below the ground state energy}

\subsection{Minimizing problems and variational structure}

First we collect properties of the ground state energy. First we define $K_{\gamma}, J_{\gamma}:H^1(\mathbb{R})\to \mathbb{R}$ as 
\begin{align*}
K_{\gamma}(\varphi)&=\| \varphi\|_{H^1}^2-\gamma|\varphi (0)|^2-\|\varphi \|_{L^{p+1}}^{p+1} ,
\\
J_{\gamma}(\varphi)&=\frac{1}{2}\left(\|\varphi \|_{H^1}^2-\gamma|\varphi(0)|^2\right)-\frac{1}{p+1}\|\varphi\|_{L^{p+1}}^{p+1},
\end{align*}
and we consider the following minimizing problems.
\begin{align*}
n_{\gamma}&=\inf \{ J_{\gamma}(\varphi): \varphi\in H^1(\mathbb{R})\setminus \{0\} ,  K_{\gamma}(\varphi)=0\},
\\
r_{\gamma}&=\inf \{J_{\gamma}(\varphi):\varphi \in H_{rad}^1(\mathbb{R})\setminus \{0\} , K_{\gamma}(\varphi)=0\},
\end{align*}
where $H_{rad}^1(\mathbb{R}):=\{ \varphi \in H^1(\mathbb{R}): \varphi (x)=\varphi (-x)\}$. In \cite{FJ}, we  get the following statements about $n_{\gamma}$ and $r_{\gamma}$.
\begin{proposition}\label{vs0}
The following properties hold.
\begin{enumerate}
\item if $0\leq\gamma<2$, $Q_{\gamma}$ is a minimizer of $n_{\gamma}$ and  $r_{\gamma}$.

\item if $-2<\gamma<0$, $Q_{\gamma}$ is a minimizer of $r_{\gamma}$. Furthermore $n_{\gamma}=J_0(Q)$ has no minimizer.

\item if $\gamma\leq -2$, $n_{\gamma}=J_0(Q)$ has no minimizer and $r_{\gamma}=2J_0(Q)$ has no minimizer.
\end{enumerate}
\end{proposition}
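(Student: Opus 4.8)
The plan is to realize $n_\gamma$ and $r_\gamma$ as minimizations on the Nehari constraint $\{K_\gamma=0\}$, to get sharp upper bounds from explicit test functions, and to obtain the matching lower bounds together with the (non)existence statements by a concentration--compactness analysis whose profiles are forced to be the explicit ground states $Q$ and $Q_\gamma$. For the preliminary reductions: on $\{K_\gamma=0\}$ one has $J_\gamma(\varphi)=\tfrac{p-1}{2(p+1)}(\|\varphi\|_{H^1}^2-\gamma|\varphi(0)|^2)=\tfrac{p-1}{2(p+1)}\|\varphi\|_{L^{p+1}}^{p+1}$, and by Lemma~\ref{deltaequivalent} the quantity $\|\varphi\|_{H^1}^2-\gamma|\varphi(0)|^2$ is comparable to $\|\varphi\|_{H^1}^2$, so together with the Sobolev inequality one gets $n_\gamma,r_\gamma>0$ and that minimizing sequences are bounded in $H^1$ and bounded away from $0$ in $L^{p+1}$ (no vanishing). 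Replacing $\varphi$ by $|\varphi|$ one may take minimizing sequences nonnegative, and even in the case of $r_\gamma$. Since $\gamma<2$, every $\varphi\ne0$ has a unique $\lambda>0$ with $\lambda\varphi\in\{K_\gamma=0\}$, so $n_\gamma,r_\gamma$ are infima of a $0$-homogeneous Sobolev-type quotient; in particular $n_0=J_0(Q)=\inf\{\tfrac{p-1}{2(p+1)}\|\psi\|_{L^{p+1}}^{p+1}:\psi\ne0,\ K_0(\psi)\le0\}$, a fact used repeatedly below.

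For the upper bounds: when $\gamma\ge0$, $Q_\gamma$ solves $-Q_\gamma''+Q_\gamma-\gamma\delta_0Q_\gamma=Q_\gamma^p$, hence $K_\gamma(Q_\gamma)=0$ and $n_\gamma\le r_\gamma\le J_\gamma(Q_\gamma)$, and a pointwise comparison from the explicit formulas gives $Q_\gamma\le Q$ (strict for $\gamma>0$), so $J_\gamma(Q_\gamma)<J_0(Q)$ for $\gamma>0$ and $Q_0=Q$. When $\gamma<0$, rescaling the translate $Q(\cdot-y)$ onto $\{K_\gamma=0\}$ and letting $y\to\infty$ (so $|Q(-y)|\to0$ and the rescaling factor tends to $1$) gives $n_\gamma\le J_0(Q)$; when $\gamma\le-2$, rescaling $Q(\cdot-y)+Q(\cdot+y)$ within the even class, with cross terms and value at $0$ of size $O(e^{-y})$, gives $r_\gamma\le 2J_0(Q)$.

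For the matching lower bounds and (non)existence — the crux — the non-existence of a minimizer of $n_\gamma$ for $\gamma<0$ admits a shortcut: $K_\gamma(\varphi)=0$ forces $K_0(\varphi)=\gamma|\varphi(0)|^2\le0$, hence $J_\gamma(\varphi)=\tfrac{p-1}{2(p+1)}\|\varphi\|_{L^{p+1}}^{p+1}\ge J_0(Q)$ by the characterization above, so with the upper bound $n_\gamma=J_0(Q)$; equality would force $K_0(\varphi)=0$, i.e. $\varphi(0)=0$, whence $\varphi=\pm Q(\cdot-a)$, contradicting $Q>0$. For the rest I would run concentration--compactness on a nonnegative (for $r_\gamma$: even) minimizing sequence: vanishing is excluded; dichotomy is killed for $n_\gamma$ by strict subadditivity of the sublevels, and for $r_\gamma$ with $\gamma\le-2$ it is the only surviving alternative — the sequence splits into two mirror bumps escaping to $\pm\infty$ with no mass left at $0$ (a trapped fraction would converge to a nonzero $H^1$ solution of the delta equation, which does not exist for $\gamma\le-2$), each bump converging to $Q$, giving $r_\gamma\ge 2J_0(Q)$ and no even minimizer (an even minimizer would solve the delta equation, hence equal $\pm Q_\gamma$, absent for $\gamma\le-2$). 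In the compact case, after translating by $\xi_n$: if $\xi_n$ stays bounded the nonzero weak limit solves $-\varphi''+\varphi-\gamma\delta_0\varphi=\varphi^p$, hence $\varphi=Q_\gamma$ by ODE uniqueness of the positive solution, giving (1) and — using $J_\gamma(Q_\gamma)<2J_0(Q)$ to exclude the split alternative — the $r_\gamma$-part of (2); if $|\xi_n|\to\infty$ the limit solves the free equation at level $J_0(Q)$ but sits at infinity, reproducing the $n_\gamma$ statements for $\gamma<0$.

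The main obstacle is the splitting analysis in the presence of the Dirac potential: one must show that no positive fraction of $L^{p+1}$-mass can remain pinned near $x=0$ once $Q_\gamma$ is unavailable, and one must pin down the sharp thresholds $J_\gamma(Q_\gamma)<J_0(Q)$ for $\gamma>0$, $J_0(Q)<J_\gamma(Q_\gamma)<2J_0(Q)$ for $-2<\gamma<0$, and the borderline $2J_0(Q)$ at $\gamma=-2$, which decide between the compact and the split configuration; the auxiliary ODE uniqueness of the positive $H^1$ solution of $-\varphi''+\varphi-\gamma\delta_0\varphi=\varphi^p$ (on each half-line a translate of the $\cosh^{-2/(p-1)}$ profile, pinned by continuity and the jump condition) also has to be invoked.
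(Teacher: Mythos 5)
The paper does not actually prove this proposition: it is imported verbatim from Fukuizumi--Jeanjean \cite{FJ} (see also \cite{II}), so your self-contained argument is by construction a different route, and it is essentially the correct one — it is the standard Nehari/concentration--compactness scheme that the cited reference carries out. Your reductions are all valid: $J_\gamma=\tfrac{p-1}{2(p+1)}\|\cdot\|_{L^{p+1}}^{p+1}$ on $\{K_\gamma=0\}$, positivity and boundedness of minimizing sequences via Lemma \ref{deltaequivalent}, the upper bounds from $Q_\gamma$ (using $Q_\gamma(x)=Q(|x|+a_\gamma)$ with $a_\gamma=\tfrac{2}{p-1}\tanh^{-1}(\gamma/2)$, which also gives the needed strict thresholds $J_\gamma(Q_\gamma)<J_0(Q)$ for $\gamma>0$ and $J_0(Q)<J_\gamma(Q_\gamma)<2J_0(Q)$ for $-2<\gamma<0$) and from escaping translates of $Q$ resp. $Q(\cdot-y)+Q(\cdot+y)$. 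The genuinely nice piece is your shortcut for the non-existence of an $n_\gamma$-minimizer when $\gamma<0$: from $K_\gamma(\varphi)=0$ one gets $K_0(\varphi)=\gamma|\varphi(0)|^2\le 0$, hence $J_\gamma(\varphi)\ge J_0(Q)$ by the sublevel characterization of $n_0$, with equality forcing $\varphi(0)=0$ and $\varphi=\pm Q(\cdot-a)$, a contradiction; this bypasses concentration--compactness entirely for that part. What remains only sketched — and is where the real work sits — is the splitting analysis for $r_\gamma$: you need (i) that trapped weak limits actually solve the Euler--Lagrange equation (so the minimizing sequence must first be upgraded to an almost-critical one, e.g.\ by Ekeland, or one must split $K_\gamma$ via Brezis--Lieb together with the local compactness of the trace $\varphi_n(0)$), (ii) the classification of $H^1$ solutions of $-\varphi''+\varphi-\gamma\delta_0\varphi=|\varphi|^{p-1}\varphi$ (only $\pm Q_\gamma$, and none for $|\gamma|\ge 2$, from the jump condition $\tanh(\tfrac{p-1}{2}a)=\gamma/2$ — a fact the paper itself uses in the proof of Proposition \ref{ssr}), and (iii) the evenness bookkeeping forcing escaping profiles to come in mirror pairs, which is what yields $r_\gamma\ge 2J_0(Q)$ for $\gamma\le-2$. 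You correctly identify these as the obstacles and all are standard, so I see no gap in the approach — only details to be written out, for which the paper's choice of citing \cite{FJ} is the economical alternative.
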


Furthermore we get a uniform bound on $K_{\gamma}$.

\begin{proposition}\label{vs1}
\begin{enumerate}
\item For any $\varphi\in H^1(\mathbb{R})$ with $J_{\gamma}(\varphi)<n_{\gamma}$, we have 
\begin{align}\label{vses}
K_{\gamma}(\varphi)\gtrsim \min{\{n_{\gamma}-J_{\gamma}(\varphi), \|\varphi\|_{H^1}^2\}} \ or \ K_{\gamma}(\varphi)\lesssim -(n_{\gamma}-J_{\gamma}(\varphi)). 
\end{align}
\item For any $\varphi\in H_{rad}^1(\mathbb{R})$ with $J_{\gamma}(\varphi)<r_{\gamma}$, we have 
\begin{align}\label{vses2}
K_{\gamma}(\varphi)\gtrsim \min{\{r_{\gamma}-J_{\gamma}(\varphi), \|\varphi\|_{H^1}^2\}} \ or \ K_{\gamma}(\varphi)\lesssim -(r_{\gamma}-J_{\gamma}(\varphi)). 
\end{align}

\end{enumerate}
\end{proposition}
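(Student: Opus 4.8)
The plan is to run the classical rescaling argument of Payne--Sattinger and Nakanishi--Schlag, observing that the $\delta_0$ potential is harmless: by Lemma \ref{deltaequivalent} the quantity $\|\varphi\|_\gamma^2:=\|\varphi\|_{H^1}^2-\gamma|\varphi(0)|^2$ is an equivalent norm on $H^1(\mathbb{R})$, and under the amplitude rescaling $\varphi\mapsto\lambda\varphi$ it is homogeneous of degree $2$, exactly as $\|\varphi\|_{H^1}^2$ is. Writing $A:=\|\varphi\|_\gamma^2$ and $B:=\|\varphi\|_{L^{p+1}}^{p+1}$, so that $K_\gamma(\varphi)=A-B$ and $J_\gamma(\varphi)=\frac{p-1}{2(p+1)}A+\frac{1}{p+1}K_\gamma(\varphi)$, the facts I will use are $\frac{d}{d\lambda}J_\gamma(\lambda\varphi)=\lambda^{-1}K_\gamma(\lambda\varphi)$, $K_\gamma(\lambda\varphi)=\lambda^2(A-\lambda^{p-1}B)$, and that $K_\gamma(\psi)=0$ with $\psi\neq0$ forces $J_\gamma(\psi)=\frac{p-1}{2(p+1)}\|\psi\|_\gamma^2$; together with the Sobolev bound $B\lesssim\|\varphi\|_{H^1}^{p+1}\lesssim A^{(p+1)/2}$ these also give $n_\gamma,r_\gamma>0$.

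For part (1), first suppose $K_\gamma(\varphi)<0$; then $\varphi\neq0$, $B>A>0$, the number $\lambda^\ast:=(A/B)^{1/(p-1)}$ lies in $(0,1)$ and satisfies $K_\gamma(\lambda^\ast\varphi)=0$, hence $J_\gamma(\lambda^\ast\varphi)\ge n_\gamma$. Using $J_\gamma(\lambda^\ast\varphi)=\frac{p-1}{2(p+1)}A(\lambda^\ast)^2$ we obtain
\[
n_\gamma-J_\gamma(\varphi)\le J_\gamma(\lambda^\ast\varphi)-J_\gamma(\varphi)=\frac{p-1}{2(p+1)}A\big((\lambda^\ast)^2-1\big)-\frac{1}{p+1}K_\gamma(\varphi)\le-\frac{1}{p+1}K_\gamma(\varphi),
\]
because $(\lambda^\ast)^2-1<0$; this is the second alternative of \eqref{vses}. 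Now suppose $K_\gamma(\varphi)\ge0$. If $A\le\delta_0$ for a suitable small $\delta_0=\delta_0(p,\gamma)$, then $B\lesssim A^{(p+1)/2}\le\frac{1}{2}A$, so $K_\gamma(\varphi)\ge\frac{1}{2}A\gtrsim\|\varphi\|_{H^1}^2$, which is the first alternative. If $A>\delta_0$, then $K_\gamma(\varphi)>0$ strictly (else $J_\gamma(\varphi)\ge n_\gamma$, against the hypothesis), so $\lambda^\ast=(A/B)^{1/(p-1)}>1$, $J_\gamma(\lambda^\ast\varphi)\ge n_\gamma$, and the same identity gives $n_\gamma-J_\gamma(\varphi)\le\frac{p-1}{2(p+1)}A((\lambda^\ast)^2-1)-\frac{1}{p+1}K_\gamma(\varphi)$.

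It remains, in this last case, to control $(\lambda^\ast)^2-1$ by $K_\gamma(\varphi)=A(1-(\lambda^\ast)^{1-p})$. Fix $\mu_0>1$ with $\mu_0^{1-p}\le\frac{1}{2}$. If $1<\lambda^\ast\le\mu_0$, then since $\mu\mapsto(\mu^2-1)/(1-\mu^{1-p})$ extends continuously to $[1,\mu_0]$ (with value $\frac{2}{p-1}$ at $\mu=1$) it is bounded there, so $(\lambda^\ast)^2-1\lesssim1-(\lambda^\ast)^{1-p}$, whence $n_\gamma-J_\gamma(\varphi)\lesssim A(1-(\lambda^\ast)^{1-p})=K_\gamma(\varphi)$. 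If $\lambda^\ast>\mu_0$, then $K_\gamma(\varphi)=A(1-(\lambda^\ast)^{1-p})\ge\frac{1}{2}A\ge\frac{1}{2}\delta_0$ and $J_\gamma(\varphi)\ge\frac{p}{2(p+1)}A>0$, so $n_\gamma-J_\gamma(\varphi)<n_\gamma\lesssim K_\gamma(\varphi)$; in both subcases the first alternative of \eqref{vses} holds. Part (2) is then proved verbatim, running everything inside $H_{rad}^1(\mathbb{R})$ (amplitude rescaling preserves evenness) and replacing $n_\gamma$ by $r_\gamma$, which is positive by Proposition \ref{vs0}.

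The only genuinely non-routine point is the bookkeeping when $K_\gamma(\varphi)\ge0$: a priori the rescaling factor $\lambda^\ast$ need not be bounded, and this is exactly what forces the $\|\varphi\|_{H^1}^2$ term (the $\min$) in \eqref{vses} and \eqref{vses2}; the split $\lambda^\ast\le\mu_0$ versus $\lambda^\ast>\mu_0$ handles it. Everything else is a homogeneity computation, and the Dirac potential never enters beyond the norm equivalence of Lemma \ref{deltaequivalent}.
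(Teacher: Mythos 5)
Your proof is correct and follows essentially the same route as the paper: both run the Payne--Sattinger amplitude-rescaling argument, treat the delta potential only through the norm equivalence of Lemma \ref{deltaequivalent}, and both isolate a small-norm regime to produce the $\|\varphi\|_{H^1}^2$ term in the minimum. The only difference is in execution: the paper integrates the differential inequalities $s''\le 2s'$ (for $K_\gamma<0$) and $s''<-s'$ (for $K_\gamma>0$ with $K_\gamma$ small relative to $\|\varphi\|_{L^{p+1}}^{p+1}$) in the scaling parameter, whereas you solve explicitly for the zero $\lambda^{\ast}$ of $\lambda\mapsto K_\gamma(\lambda\varphi)$ and compare $(\lambda^{\ast})^2-1$ with $1-(\lambda^{\ast})^{1-p}$ directly; the two computations are equivalent (your displayed constant $\tfrac{p}{2(p+1)}$ should read $\tfrac{p-1}{2(p+1)}$, which is immaterial since only $J_\gamma(\varphi)>0$ is needed there).
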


This uniform bound is useful to classify the global behavior of solutions whose energy is less than ground state energy for the nonlinear dispersive equations. To prove Proposition \ref{vs1}, we refer to \cite[Proposition 2.18]{II} and \cite[Lemma 2.12]{IMN}.

\begin{proof}
First we prove (1). We may assume $\varphi \neq 0$. We define $s(\lambda)=J_{\gamma}(e^{\lambda}\varphi)$. Then we have
\begin{align}
s(\lambda)&=\frac{e^{2\lambda}}{2}\left(\|\varphi\|_{H^1}^2-\gamma|\varphi(0)|^2\right)-\frac{e^{(p+1)\lambda}}{p+1}\|\varphi\|_{L^{p+1}}^{p+1}, \label{ses}
\\
s^{\prime}(\lambda)&=e^{2\lambda}\left(\|\varphi\|_{H^1}^2-\gamma|\varphi(0)|^2\right)-e^{(p+1)\lambda}\|\varphi\|_{L^{p+1}}^{p+1}, \label{spes}
\\ 
s^{\prime \prime}(\lambda)&=2e^{2\lambda}\left(\|\varphi\|_{H^1}^2-\gamma|\varphi(0)|^2\right)-(p+1)e^{(p+1)\lambda}\|\varphi\|_{L^{p+1}}^{p+1}. \label{sppes}
\end{align}
By \eqref{spes} and \eqref{sppes}, we obtain $s^{\prime}(0)=K_{\gamma}(\varphi)$ and
\begin{align}\label{spsppes}
 s^{\prime \prime}(\lambda)\leq 2s^{\prime}(\lambda).
\end{align}
First we consider $K_{\gamma}(\varphi)<0$. Then we have $s^{\prime}(0)<0$ and $s^{\prime}(\lambda)>0$ for sufficiently small $\lambda<0$. Therefore there exists $\lambda_0<0$ such that $s^{\prime}(\lambda_0)=0$. By integrating \eqref{spsppes} on $[\lambda_0,0]$, we have
\begin{align}\label{sspes}
s^{\prime}(0)-s^{\prime}(\lambda_0)\leq 2(s(0)-s(\lambda_0)).
\end{align}
By \eqref{sspes} and $s^{\prime}(\lambda_0)=0$,  we obtain 
\begin{align*}
K_{\gamma}(\varphi)=s^{\prime}(0)\leq -2(n_{\gamma}-J_{\gamma}(\varphi)).
\end{align*}
Second we consider $K_{\gamma}(\varphi)\geq 0$. Since $\varphi \neq 0$, we consider $K_{\gamma}(\varphi)>0$. If 
\begin{align*}
K_{\gamma}(\varphi)\geq \frac{1}{3(p-1)}\|\varphi\|_{L^{p+1}}^{p+1},
\end{align*}
we have
\begin{align*}
(1+\frac{1}{3(p-1)})K_{\gamma}(\varphi)\geq \frac{1}{3(p-1)}\left(\|\varphi\|_{H^1}^2-\gamma|\varphi(0)|^2\right).
\end{align*}
By \eqref{deqes}, we obtain $K_{\gamma}(\varphi)\gtrsim \|\varphi\|_{H^1}^2$. If 
\begin{align*}
K_{\gamma}(\varphi)<\frac{1}{3(p-1)}\|\varphi\|_{L^{p+1}}^{p+1},
\end{align*}
there exists $\lambda_1>0$ such that 
\begin{align}\label{spes1}
0<s^{\prime}(\lambda)<\frac{1}{3(p-1)}e^{(p+1)\lambda}\|\varphi\|_{L^{p+1}}^{p+1}
\end{align}
for $0<\lambda<\lambda_1$. Then by direct calculation, we have
\begin{align*}
s^{\prime\prime}(\lambda)&=2s^{\prime}(\lambda)-(p-1)e^{(p+1)\lambda}\|\varphi\|_{L^{p+1}}^{p+1}
\\
&<\{2-3(p-1)^2\}s^{\prime}(\lambda)
\\
&<-s^{\prime}(\lambda)
\end{align*}
for $0<\lambda<\lambda_1$. In particular, as long as \eqref{spes1} holds,  $s^{\prime}$ decreases. Furthermore $\lim_{\lambda\to\infty}s^{\prime}(\lambda)=-\infty$ holds and we may assume $s^{\prime}(\lambda_1)=0$. Then by integrating $s^{\prime \prime}<-s^{\prime}$ on $[0,\lambda_1]$, we have
\begin{align*}
s^{\prime}(\lambda_1)-s^{\prime}(0)<-(s(\lambda_1)-s(0)).
\end{align*}
Thus we obtain
\begin{align*}
K_{\gamma}(\varphi)>n_{\gamma}-J_{\gamma}(\varphi).
\end{align*}
Gathering these arguments, we obtain \eqref{vses}.

We note that we only use the variational structure to obtain \eqref{vses}. Therefore we obtain \eqref{vses2} by the same argument.

\end{proof}

\subsection{Classification of global dynamics}
In this subsection, we classify the global behavior of solutions whose energy is less than the ground state energy. To classify the global behavior of solutions, we use the argument as \cite{PS}.
\begin{proposition}\label{classify}
Let $\vec{u}$ be a solution of \eqref{DNKG}.
\begin{enumerate}
\item  If $\vec{u}$ satisfies $E_{\gamma}(\vec{u}(0))<n_{\gamma}$ and $K_{\gamma}(u(0))\geq0$, then $\vec{u}$ is global and converges exponentially to $0$ in $\mathcal{H}$ as $t\to\infty$.
\item If $\vec{u}$ satisfies $E_{\gamma}(\vec{u}(0))<n_{\gamma}$ and $K_{\gamma}(u(0))<0$, then $\vec{u}$ blows up in finite time.

\item If $\vec{u}$ is even and satisfies $E_{\gamma}(\vec{u}(0))<r_{\gamma}$ and $K_{\gamma}(u(0))\geq0$, then $\vec{u}$ is global and converges exponentially to $0$ in $\mathcal{H}$ as $t\to\infty$.

\item If $\vec{u}$ is even and satisfies $E_{\gamma}(\vec{u}(0))<r_{\gamma}$ and $K_{\gamma}(u(0))<0$, then $\vec{u}$ blows up in finite time.

\end{enumerate}
\end{proposition}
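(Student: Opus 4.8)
The plan is to run a Payne--Sattinger type trapping argument, the three inputs being the energy dissipation \eqref{energydecay}, the sharp variational dichotomy of Proposition~\ref{vs1}, and the a priori bound of Theorem~\ref{ge}. It is convenient to set $I(\varphi)=\|\varphi\|_{H^1}^2-\gamma|\varphi(0)|^2$ and $N(\varphi)=\|\varphi\|_{L^{p+1}}^{p+1}$, so that $K_\gamma=I-N$ and, by a one-line computation, $J_\gamma=\frac{p-1}{2(p+1)}I+\frac1{p+1}K_\gamma$ and $E_\gamma(\vec u)=\frac{p-1}{2(p+1)}I(u)+\frac1{p+1}K_\gamma(u)+\frac12\|\partial_t u\|_{L^2}^2$. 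Combined with Lemma~\ref{deltaequivalent}, this yields $E_\gamma(\vec u)\gtrsim\|\vec u\|_{\mathcal H}^2\gtrsim0$ whenever $K_\gamma(u)\ge0$, while $E_\gamma(\vec u)\lesssim\|\vec u\|_{\mathcal H}^2$ always. The reflection symmetry in \eqref{invariance} together with uniqueness shows that the even sector $H^1_{rad}(\mathbb R)\times L^2(\mathbb R)$ is preserved by \eqref{DNKG}, so that (3),(4) reduce to (1),(2) for even data with $n_\gamma$ replaced by $r_\gamma$ and Proposition~\ref{vs1}(1) by Proposition~\ref{vs1}(2).

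First I would show that the sign of $K_\gamma(u(t))$ is frozen along the flow. Fix a solution with $E_\gamma(\vec u(0))<n_\gamma$; since $t\mapsto E_\gamma(\vec u(t))$ is non-increasing and $J_\gamma(u(t))\le E_\gamma(\vec u(t))$, we get $n_\gamma-J_\gamma(u(t))\ge n_\gamma-E_\gamma(\vec u(0))=:\delta_0>0$ for every $t$ in the maximal interval. Proposition~\ref{vs1}(1) then says that the continuous scalar function $t\mapsto K_\gamma(u(t))$ never takes a value in the interval $(-c\delta_0,0)$ with $c>0$ fixed, so by the intermediate value theorem: if $K_\gamma(u(0))\ge0$ then $K_\gamma(u(t))\ge0$ for all $t$, and if $K_\gamma(u(0))<0$ then $K_\gamma(u(t))\le-c\delta_0$ for all $t$.

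For (1) and (3): in the stable regime the coercivity above gives $\|\vec u(t)\|_{\mathcal H}^2\lesssim E_\gamma(\vec u(t))\le E_\gamma(\vec u(0))$, hence $\sup_t\|\vec u(t)\|_{\mathcal H}<\infty$ and the solution is global by Proposition~\ref{lwp}(2). For the decay I would use the Lyapunov functional $\mathcal W(t)=E_\gamma(\vec u(t))+\epsilon\int u(t)\partial_t u(t)\,dx+\epsilon\alpha\|u(t)\|_{L^2}^2$, which for $0<\epsilon<2\alpha$ small satisfies $\mathcal W\sim\|\vec u\|_{\mathcal H}^2$ and, using the equation and \eqref{energydecay}, $\mathcal W'(t)=-(2\alpha-\epsilon)\|\partial_t u(t)\|_{L^2}^2-\epsilon K_\gamma(u(t))\le0$. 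Thus $\mathcal W$ decreases to a limit $\mathcal W_\infty\ge0$ and $\int_0^\infty(\|\partial_t u\|_{L^2}^2+K_\gamma(u))\,dt<\infty$; along some $t_n\to\infty$ one gets $\|\partial_t u(t_n)\|_{L^2}\to0$ and $K_\gamma(u(t_n))\to0$, and by the gap of Proposition~\ref{vs1} the latter forces $\|u(t_n)\|_{H^1}\to0$, so $\mathcal W(t_n)\to0$ and therefore $\mathcal W(t)\to0$, i.e. $\vec u(t)\to0$ in $\mathcal H$. (One could instead invoke Theorem~\ref{sr} and eliminate a nonzero limit profile by noting each solitary wave carries energy at least $n_\gamma$, resp. $r_\gamma$, while $E_\gamma(\vec u(t))<n_\gamma$.) Finally, once $\|\vec u(t)\|_{\mathcal H}\le\eta$ for $t\ge T_0$ with $\eta$ small, Sobolev gives $N(u(t))\lesssim\|u(t)\|_{H^1}^{p+1}\le\eta^{p-1}\|u(t)\|_{H^1}^2$, so $K_\gamma(u(t))\ge\frac12 C_\gamma^{-1}\|u(t)\|_{H^1}^2$ and hence $\mathcal W'(t)\lesssim-\|\vec u(t)\|_{\mathcal H}^2\lesssim-\mathcal W(t)$; integrating gives exponential decay of $\mathcal W$, hence of $\|\vec u(t)\|_{\mathcal H}$.

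For (2) and (4): now $K_\gamma(u(t))\le-c\delta_0<0$ throughout. With $M(t)=\frac12\|u(t)\|_{L^2}^2+\alpha\int_0^t\|u(s)\|_{L^2}^2\,ds$ as in Theorem~\ref{ge}, a direct computation using the equation gives the clean identity $M''(t)=\|\partial_t u(t)\|_{L^2}^2-K_\gamma(u(t))\ge c\delta_0>0$, so $M'(t)\ge M'(0)+c\delta_0\,t$. If the solution were global this would force $M'(t)\to\infty$; but $|M'(t)|\lesssim\|\vec u(t)\|_{\mathcal H}^2$ is bounded for a global solution by Theorem~\ref{ge}, a contradiction. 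Hence $T(U_0)<\infty$, i.e. the solution blows up in finite time, and $\|\vec u(t)\|_{\mathcal H}\to\infty$ by Proposition~\ref{lwp}(2). The only genuinely delicate step is the exponential rate in (1)/(3): everything else is routine once Proposition~\ref{vs1} is in hand, but passing from ``$\vec u(t)\to0$'' to an exponential rate requires first entering the linearly stable regime and then checking that the subcritical nonlinearity is a genuine perturbation of the linear Lyapunov inequality, which is where $p>2$ (in fact $p>1$) enters. The blow-up half, by contrast, is soft: the identity $M''=\|\partial_t u\|_{L^2}^2-K_\gamma(u)$ does all the work and the damping only helps.
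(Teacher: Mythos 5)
Your proof is correct and follows essentially the same route as the paper: sign persistence of $K_{\gamma}$ via the variational gap of Proposition \ref{vs1}, the Lyapunov functional $E_{\gamma}(\vec{u})+\epsilon\,\mathcal{P}(\vec{u})$ (the paper's $\mathcal{P}(\vec u)+CE_{\gamma}(\vec u)$) for global existence and decay, and the monotonicity identity $\partial_t\mathcal{P}(\vec u)=\|\partial_t u\|_{L^2}^2-K_{\gamma}(u)\geq c(n_{\gamma}-E_{\gamma}(\vec u(0)))$ played against the uniform bound of Theorem \ref{ge} for blow-up. The only cosmetic difference is in part (1)/(3): the paper gets the coercive inequality $\mathcal{W}'\lesssim-\mathcal{W}$ for all $t$ at once, since $K_{\gamma}(u)\gtrsim\min\{n_{\gamma}-J_{\gamma}(u),\|u\|_{H^1}^2\}\gtrsim\|u\|_{H^1}^2$ once $\|u\|_{H^1}$ is known to be a priori bounded, whereas you first prove qualitative convergence to $0$ and then rerun the argument in the small-data regime via Sobolev — both are valid.
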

Before we prove Proposition \ref{classify}, we define $\mathcal{P}:\mathcal{H}\to \mathbb{R}$ as 
\begin{align*}
\mathcal{P}(\varphi_1,\varphi_2)=\int \varphi_1\varphi_2+\alpha\|\varphi_1\|_{L^2}^2 .
\end{align*}

\begin{proof}
First we prove (1). Since $J_{\gamma}(u(0))<n_{\gamma}$, for all $t\geq 0$ $K_{\gamma}(u(t))\geq 0$ holds. Then we have for $t\geq 0$
\begin{align*}
 E_{\gamma}(\vec{u})&=\frac{1}{p+1}K_{\gamma}(u)+\frac{1}{2}\|{\partial}_tu\|_{L^2}^2+\frac{p-1}{2(p+1)}\left(\|u\|_{H^1}^2-\gamma|u(0)|^2\right)
 \\
 &\geq \frac{1}{2}\|{\partial}_tu\|_{L^2}^2+\frac{p-1}{2(p+1)}\left(\|u\|_{H^1}^2-\gamma|u(0)|^2\right),
\end{align*}
and therefore we obtain 
\begin{align}\label{simeq}
E_{\gamma}(\vec{u}(t))\sim \| \vec{u}(t)\|_{\mathcal{H}}^2
\end{align}
for all $t\geq 0$. By direct calculation and \eqref{simeq} and $\mathcal{P}(\vec{u})\lesssim \|\vec{u}\|_{\mathcal{H}}$, there exists $C>0$ such that 
\begin{align*}
{\partial}_t[\mathcal{P}(\vec{u})+CE_{\gamma}(\vec{u})]&=(1-2\alpha C)\|{\partial}_tu\|_{L^2}^2-K_{\gamma}(u)\leq -\frac{1}{C}[\mathcal{P}(\vec{u})+CE_{\gamma}(\vec{u})],
\\
\|\vec{u}(t)\|_{\mathcal{H}}&\sim \mathcal{P}(\vec{u}(t))+CE_{\gamma}(\vec{u}(t)).
\end{align*}
Thus we obtain
\begin{align*}
\|\vec{u}(t)\|_{\mathcal{H}}\lesssim e^{-\frac{t}{C}}\|\vec{u}(0)\|_{\mathcal{H}},
\end{align*}
and so the desired estimate.

Next we prove (2). We assume that $\vec{u}$ is global. By Proposition \ref{vs1} and \eqref{energydecay}, there exists $c>0$ such that 
\begin{align*}
{\partial}_t\mathcal{P}(\vec{u})&=\|{\partial}_tu\|_{L^2}^2-K_{\gamma}(u)
\\
&\geq c(n_{\gamma}-J_{\gamma}(u))
\\
&\geq c(n_{\gamma}-E_{\gamma}(\vec{u}))
\\
&\geq c\left(n_{\gamma}-E_{\gamma}(\vec{u}(0))\right).
\end{align*}
By the above estimate, $\lim_{t\to \infty}\mathcal{P}(\vec{u}(t))=\infty$ holds and therefore $\lim_{t\to \infty} \|\vec{u}(t)\|_{\mathcal{H}}=\infty$ holds, which contradicts Theorem \ref{ge}. Thus we have proved (2).

Last, we prove (3) and (4). Let $\vec{u}$ be an even. Then \eqref{vses2} holds, and we can prove them by the same arguments as (1) and (2). Therefore we complete the proof.

\end{proof}

\begin{remark}
We assume that a solution $\vec{u}$ of \eqref{DNKG} satisfies $\lim_{t\to\infty}\|\vec{u}(t)\|_{\mathcal{H}}=0$. Then $\vec{u}$ satisfies $\|\vec{u}(t)\|_{\mathcal{H}}\ll1$ for some $t\in\mathbb{R}$, and therefore $\vec{u}(t)$ satisfies $E_{\gamma}(\vec{u}(t))<n_{\gamma}$ and $K_{\gamma}(u(t))\geq 0$. Therefore By (1) of Proposition \ref{classify}, $\vec{u}$ converges exponentially to $0$ as $t\to\infty$. By this argument, When a global solution $\vec{u}$ satisfies \eqref{sr-1} with $\sigma=0$ and $K=0$, $\vec{u}$ converges exponentially to $0$ as $t\to\infty$.

\end{remark}

We define $\mathcal{H}_{rad}$ as 
\begin{align*}
\mathcal{H}_{rad}=\{(\varphi_1,\varphi_2)\in \mathcal{H}:\varphi_1(x)=\varphi_1(-x),\ \varphi_2(x)=\varphi_2(-x)\}.
\end{align*}
Furthermore we define $\mathcal{B},\ \mathcal{B}_{rad},\ \mathcal{V},\ \mathcal{V}_{rad}$ by
\begin{align*}
\mathcal{B}&=\{ \vec{\varphi}\in \mathcal{H}: \vec{u}\ \mbox{blows up in finite time}\},
\\
\mathcal{B}_{rad}&=\{ \vec{\varphi}\in \mathcal{H}_{rad}: \vec{u}\ \mbox{blows up in finite time}\},
\\
\mathcal{V}&=\{ \vec{\varphi}\in \mathcal{H}: \vec{u}\ \mbox{is global and} \ \lim_{t\to \infty}\|\vec{u}(t)\|_{\mathcal{H}}=0\},
\\
\mathcal{V}_{rad}&=\{ \vec{\varphi}\in \mathcal{H}_{rad}: \vec{u}\ \mbox{is global and} \ \lim_{t\to \infty}\|\vec{u}(t)\|_{\mathcal{H}}=0\},
\end{align*}
where $\vec{u}$ is the solution of \eqref{DNKG} with initial data $\vec{\varphi}$. Then by Proposition \ref{classify}, $\mathcal{V}$ and $\mathcal{V}_{rad}$ are open. On the other hand, by Proposition \ref{lwp} and Theorem \ref{ge}, we obtain that the set of global solutions of \eqref{DNKG} is closed. Therefore $\mathcal{B}$ and $\mathcal{B}_{rad}$ are open.

\subsection{Proof of Theorem \ref{existencesoliton}}

In this subsection, we prove Theorem \ref{existencesoliton}. Let $z>0$ be large enough.  We define 
\begin{align*}
Q_{\varsigma}&=Q(\cdot-z)+\varsigma Q(\cdot+z),
\\
Q_{\lambda,\varsigma}&=e^{\lambda}Q_{\varsigma},
\\
\vec{Q}_{\lambda,\varsigma}&=(Q_{\lambda,\varsigma},0),
\\
\mathcal{A}_{\varsigma}&=\{ \vec{Q}_{\lambda,\varsigma}: -1\leq \lambda\leq 1\},
\end{align*}
where $\varsigma=0,1$ and $-1\leq \lambda\leq 1$. Furthermore we define 
\begin{align*}
x_{\varsigma}(\lambda)=\frac{e^{2\lambda}}{2}\|Q_{\varsigma} \|_{H^1}^2-\frac{e^{(p+1)\lambda}}{p+1}\|Q_{\varsigma}\|_{L^{p+1}}^{p+1}-\frac{\gamma e^{2\lambda}}{2}|Q_{\varsigma}(0)|^2.
\end{align*}
Then by direct calculation, we have
\begin{align*}
x_{\varsigma}^{\prime}(\lambda)&=e^{2\lambda}\|Q_{\varsigma}\|_{H^1}^2-e^{(p+1)\lambda}\|Q_{\varsigma}\|_{L^{p+1}}^{p+1}-\gamma e^{2\lambda}|Q_{\varsigma}(0)|^2,
\\
x_{\varsigma}^{\prime \prime}(\lambda)&=2e^{2\lambda}\|Q_{\varsigma}\|_{H^1}^2-(p+1)e^{(p+1)\lambda}\|Q_{\varsigma}\|_{L^{p+1}}^{p+1}-2\gamma e^{2\lambda}|Q_{\varsigma}(0)|^2.
\end{align*}
Now we start to prove Theorem \ref{existencesoliton}.
\begin{proof}[{Proof of Theorem \ref{existencesoliton}}]
First we prove (1) of Theorem \ref{existencesoliton}. We consider $\varsigma=0$. Since $K_0(Q)=0$ holds, we have
\begin{align*}
x_0^{\prime}(0)&=-\gamma|Q(z)|^2,
\\
x_0^{\prime \prime}(0)&=-(p-1)\|Q\|_{L^{p+1}}^{p+1}-2\gamma|Q(z)|^2.
\end{align*}
We recall that $z$ is large enough. Thus there exists  $\lambda^{\prime}<0$ close to $0$ such that  $x_0^{\prime}(\lambda^{\prime})=0$. Moreover there exists $C>0$ and $0<\delta<1$ such that for $-\delta\leq \lambda\leq\delta$  
\begin{align*}
|x_0^{\prime}(\lambda)-x_0^{\prime}(\lambda^{\prime})|\geq C|\lambda-\lambda^{\prime}|.
\end{align*}
We note that $\delta$ is independent of $z$ if $z$ is large enough. Then  for $-\delta\leq \lambda<\lambda^{\prime}$, $x_0^{\prime}(\lambda)>0$ holds and for $\lambda^{\prime}<\lambda\leq \delta$, $x_0^{\prime}(\lambda)<0$ holds. Therefore for $-\delta\leq \lambda\leq \delta$, we have
\begin{align*}
x_0(\lambda^{\prime})-x_0(\lambda)&=\int_{\lambda}^{\lambda^{\prime}}x_0^{\prime}(s)-x_0^{\prime}(\lambda^{\prime})ds
\\
&= \left|\int_{\lambda^{\prime}}^{\lambda}|x_0^{\prime}(s)-x_0^{\prime}(\lambda^{\prime})|ds\right|
\\
&\geq C\left|\int_{\lambda^{\prime}}^{\lambda}|s-\lambda^{\prime}|ds\right|
\\
&=\frac{C}{2}|\lambda-\lambda^{\prime}|^2.
\end{align*}
Thus we have
\begin{align*}
x_0^{\prime}(-\delta)>0 ,\ x_0^{\prime}(\delta)<0,
\\
x_0(-\delta)<n_{\gamma} ,\ x_0(\delta)<n_{\gamma},
\end{align*}
because $\lambda^{\prime}$ is close to $0$. Therefore by Proposition \ref{classify}, $\vec{Q}_{-\delta,0}\in \mathcal{V}$ and $\vec{Q}_{\delta,0}\in \mathcal{B}$.
Since $\mathcal{A}$ is connected and $\mathcal{B}$ and $\mathcal{V}$ are open, there exists $-1\leq \lambda^{\star}\leq 1$ such that $\vec{Q}_{\lambda^{\star},0}\notin \mathcal{B}$ and  $\vec{Q}_{\lambda^{\star},0}\notin \mathcal{V}$. Moreover we have
\begin{align}\label{soliene}
E_{\gamma}(\vec{Q}_{\lambda^{\star},0})<E_{\gamma}(\vec{Q}_{\lambda^{\prime},0}).
\end{align}
We note that $E_{\gamma}(\vec{Q}_{\lambda^{\prime},0})$ is close to $n_{\gamma}$. Thus by Theorem \ref{sr} and \eqref{soliene}, the solution $\vec{u}_{\star,0}$ of \eqref{DNKG} with initial data $\vec{Q}_{\lambda^{\star},0}$ satisfies 
\begin{align*}
\lim_{t\to \infty}\left(\| u_{\star,0}(t)-\sigma^{\prime} Q(\cdot-z(t))\|_{H^1}+\|{\partial}_tu_{\star,0}(t)\|_{L^2}\right)=0,
\end{align*}
where $\sigma^{\prime}=\pm 1$ and $\lim_{t\to \infty}|z(t)|=\infty$. This completes the proof of (1) of Theorem \ref{existencesoliton}, since \eqref{invariance} holds.

Next we prove (2) of Theorem \ref{existencesoliton}. We consider $\varsigma=1$.  Then $\mathcal{A}_1\subset \mathcal{H}_{rad}$ holds. Furthermore  by Proposition \ref{vs0} and  direct computation, we have
\begin{align*}
|x_1(0)-r_{\gamma}|&\lesssim e^{-z},
\\
|x_1^{\prime}(0)|&\lesssim e^{-z},
\\
\left|x_1^{\prime \prime}(0)+2(p-1)\|Q\|_{L^{p+1}}^{p+1}\right|&\lesssim e^{-z}. 
\end{align*}
Thus by the same argument as (1) of Theorem \ref{existencesoliton}, we complete the proof of (2) of Theorem \ref{existencesoliton}.

\end{proof}

\section{Dynamics close to solitary waves}

\subsection{Basic properties of the ground state }

In this subsection, we collect some basic properties. First, the linearized operator $\mathcal{L}$ around $Q$ writes
\begin{align}\label{Ldef}
\mathcal{L}=-{\partial}_x^2+1-pQ^{p-1}.
\end{align}
We define $\nu, \phi$ as  
\begin{align*}
\nu&=\sqrt{\frac{(p-1)(p+3)}{4}},
\\
\phi(x)&=\left( \frac{1}{\cosh{(\frac{p-1}{2}x)}}\right)^{\frac{p+1}{p-1}}.
\end{align*}
We note that 
\begin{align*}
\phi=\left(\frac{2}{p+1}\right)^{\frac{p+1}{2(p-1)}} Q^{\frac{p+1}{2}}
\end{align*}
holds. We introduce the following properties. In detail, see \cite{CGNT}.

\begin{lemma}\label{Lproperty}
\begin{enumerate}
\item $\mathcal{L}$ has a unique negative eigenvalue $-\nu^2$ and its eigenspace $(\mathcal{L}+\nu^2)^{-1}(\{0\})=\Span{\{\phi\}}$, and its kernel is $\Span{ \{Q^{\prime}\}}$.
\item There exists $c>0$ such that, for all $\varphi \in H^1(\mathbb{R})$,
\begin{align*}
\langle \mathcal{L}\varphi,\varphi\rangle\geq c\|\varphi\|_{H^1}^2-\frac{1}{c}\left(\langle \varphi,\phi\rangle^2+\langle\varphi,Q^{\prime}\rangle^2\right).
\end{align*}
\end{enumerate}
\end{lemma}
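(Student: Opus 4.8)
I would treat $\mathcal{L}=-\partial_x^2+1-pQ^{p-1}$ as a one‑dimensional Schr\"odinger operator and exploit the explicit form of $Q$. Since $pQ^{p-1}(x)\to0$ as $|x|\to\infty$, the potential is a relatively compact perturbation of $-\partial_x^2+1$, so $\sigma_{\mathrm{ess}}(\mathcal{L})=[1,\infty)$ and everything in $(-\infty,1)$ consists of isolated eigenvalues of finite multiplicity. Differentiating $Q''-Q+Q^p=0$ gives $\mathcal{L}Q'=0$, hence $Q'\in\Ker\mathcal{L}$. For the negative eigenvalue I would check directly that $\mathcal{L}\phi=-\nu^2\phi$: writing $\phi=c\,Q^{(p+1)/2}$ and using $Q''=Q-Q^p$ together with the first integral $(Q')^2=Q^2-\tfrac{2}{p+1}Q^{p+1}$, a short computation shows that the $Q^{(p+1)/2+p-1}$ terms cancel and one is left with $\mathcal{L}\phi=\bigl(1-\tfrac{(p+1)^2}{4}\bigr)\phi=-\tfrac{(p-1)(p+3)}{4}\phi$.

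\textbf{Finishing (1).} Because $\phi>0$, it is the ground state of $\mathcal{L}$, so $-\nu^2=\inf\sigma(\mathcal{L})$ is a simple eigenvalue with eigenspace $\Span\{\phi\}$. Sturm oscillation theory then pins down the rest of the discrete spectrum: since $Q$ is even and strictly decreasing on $(0,\infty)$, $Q'$ vanishes exactly once (at $x=0$), so $Q'$ is the second eigenfunction; hence $0$ is the second eigenvalue, it is simple, $\Ker\mathcal{L}=\Span\{Q'\}$, and there is no eigenvalue in $(-\nu^2,0)$. This is exactly (1).

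\textbf{Proof of (2).} By the spectral picture, $\mathcal{L}$ restricted to $\{\phi,Q'\}^{\perp}$ has spectrum in $[c_0,\infty)$ for some $c_0>0$, so $\langle\mathcal{L}\psi,\psi\rangle\geq c_0\|\psi\|_{L^2}^2$ whenever $\psi\perp\phi,Q'$. Writing $\langle\mathcal{L}\psi,\psi\rangle=\|\psi\|_{H^1}^2-p\int Q^{p-1}\psi^2\geq\|\psi\|_{H^1}^2-C\|\psi\|_{L^2}^2$ and combining the two bounds upgrades this to $\langle\mathcal{L}\psi,\psi\rangle\gtrsim\|\psi\|_{H^1}^2$ for such $\psi$. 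For arbitrary $\varphi\in H^1$, set $\psi=\varphi-\tfrac{\langle\varphi,\phi\rangle}{\|\phi\|_{L^2}^2}\phi-\tfrac{\langle\varphi,Q'\rangle}{\|Q'\|_{L^2}^2}Q'$; using $\phi\perp Q'$ (even vs.\ odd), $\mathcal{L}\phi=-\nu^2\phi$ and $\mathcal{L}Q'=0$, all cross terms drop out and $\langle\mathcal{L}\varphi,\varphi\rangle=\langle\mathcal{L}\psi,\psi\rangle-\nu^2\|\phi\|_{L^2}^{-2}\langle\varphi,\phi\rangle^2$, while $\|\psi\|_{H^1}^2\gtrsim\|\varphi\|_{H^1}^2-C\bigl(\langle\varphi,\phi\rangle^2+\langle\varphi,Q'\rangle^2\bigr)$. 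Combining yields $\langle\mathcal{L}\varphi,\varphi\rangle\geq c\|\varphi\|_{H^1}^2-\tfrac1c\bigl(\langle\varphi,\phi\rangle^2+\langle\varphi,Q'\rangle^2\bigr)$ after adjusting $c$.

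\textbf{Main obstacle.} The only genuinely delicate points are the explicit identity $\mathcal{L}\phi=-\nu^2\phi$ (routine but requires the first‑integral bookkeeping) and making the Sturm‑oscillation count airtight, i.e.\ ruling out any eigenvalue in $(-\nu^2,0)$ or an extra eigenvector at $0$. Alternatively one may cite \cite{CGNT} for the full spectral description in (1) and present only the decomposition argument for (2), which is self‑contained given (1).
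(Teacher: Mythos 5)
Your proposal is correct, and in fact the paper gives no proof of this lemma at all -- it simply refers the reader to \cite{CGNT} -- so your sketch supplies the standard argument that underlies that reference: the direct verification $\mathcal{L}\phi=\bigl(1-\tfrac{(p+1)^2}{4}\bigr)\phi=-\nu^2\phi$ via the first integral $(Q')^2=Q^2-\tfrac{2}{p+1}Q^{p+1}$ (your cancellation is right), positivity of $\phi$ plus Sturm oscillation for the spectral picture in (1), and the orthogonal decomposition with the interpolation upgrade from $L^2$- to $H^1$-coercivity for (2). Nothing is missing; your final remark that one may simply cite \cite{CGNT} for (1) is exactly what the paper does.
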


Now we consider the linearized operator around $(Q(\cdot-z),0)$, where $z\gg 1$. When we ignore the potential-soliton repulsion, the linearized evolution around $(Q(\cdot-z),0)$ is given by
\begin{align}\label{lode1}
\frac{d}{dt}v= 
\begin{pmatrix}
0 & 1
\\
-(-{\partial}_x^2+1-pQ^{p-1}(\cdot-z)) & -2\alpha
\end{pmatrix}
v.
\end{align}
We define $\nu^{\pm}$ and $Y^{\pm}$ as  
\begin{align*}
\nu^{\pm}&=-\alpha \pm \sqrt{\alpha^2+\nu^2},
\\
Y^{\pm}&=
\begin{pmatrix}
1
\\
\nu^{\pm}
\end{pmatrix}
\phi(\cdot-z).
\end{align*}
Since $\nu^+>0$, the solution $v=e^{\nu^+t}Y^+$ of \eqref{lode1} exhibits the exponential instability of $(Q(\cdot-z),0)$. In particular, we see that the presence of the damping $\alpha>0$ in the equation does not remove the exponential instability.

Second We recall properties of soliton interactions. In detail, see \cite{CMYZ}. 
Before we establish the lemma about soliton interactions, we define some notations. We assume that $z_1,z_2\in\mathbb{R}$ satisfy $|z_1-z_2|\gg 1$ and we define
\begin{align*}
Q_1(x)=Q(x-z_1),\ Q_2(x)=Q(x-z_2).
\end{align*}

\begin{lemma}\label{sil}
The following properties hold for $|z_1-z_2|\gg1$.
\begin{enumerate}
\item The following equation holds:
\begin{align}\label{c_qes1}
\int Q^pe^{-x}=2c_Q.
\end{align}
\item    For any $0<m^{\prime}<m$, 
\begin{align}
0<\int Q_1^mQ_2^m &\lesssim e^{-m^{\prime}|z_1-z_2|}, \label{si1}
\\
0<\int Q_1^mQ_2^{m^{\prime}}&\lesssim e^{-m^{\prime}|z_1-z_2|}. \label{si2}
\end{align}
\item For any $m>1$,
\begin{align*}
\left|\int Q_1Q_2^m-c_me^{-|z_1-z_2|}\right|&\lesssim e^{-\frac{3}{2}|z_1-z_2|},
\\
\left|\int ({\partial}_xQ_1)Q_2^m-\frac{z_1-z_2}{|z_1-z_2|}c_me^{-|z_1-z_2|}\right|&\lesssim e^{-\frac{3}{2}|z_1-z_2|} ,
\end{align*}
where $c_m=c_Q\int e^{-x}Q^m$.

\item The following estimate holds:
\begin{align*}
\left|\langle f(Q_1+Q_2)-f(Q_1)-f(Q_2),{\partial}_xQ_1\rangle-\frac{z_1-z_2}{|z_1-z_2|}\cdot 2c_Q^2e^{-|z_1-z_2|}\right|\lesssim e^{-\theta |z_1-z_2|},
\end{align*}
where $\theta=\min{(p-1,\frac{3}{2})}$.
\end{enumerate}
\end{lemma}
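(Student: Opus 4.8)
The four items are elementary computations built on the explicit exponential tails in \eqref{Qes}, and I would treat them in the order stated, since (4) relies on (1) and (3). For (1), multiply the profile equation $Q''-Q+Q^p=0$ by $e^{-x}$ and integrate over $[-R,R]$ (one cannot split $Q^p=Q-Q''$ globally, as $\int Q e^{-x}$ diverges at $-\infty$). Integrating by parts twice, the divergent contribution $\int_{-R}^{R}Qe^{-x}$ cancels and one is left with $\int_{-R}^{R}Q^pe^{-x}\,dx=-\big[(Q'+Q)e^{-x}\big]_{-R}^{R}$. Letting $R\to\infty$ and inserting $Q(x)=c_Qe^{-|x|}+O(e^{-2|x|})$, $Q'(x)=-\tfrac{x}{|x|}c_Qe^{-|x|}+O(e^{-2|x|})$ from \eqref{Qes}, the endpoint at $+R$ contributes $0$ and the endpoint at $-R$ contributes $-2c_Q$, giving the value $2c_Q$.

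For (2), assume without loss of generality $z_1<z_2$ and set $d=z_2-z_1$. From $Q(y)\lesssim e^{-|y|}$ one has $Q_1^mQ_2^{m'}\lesssim e^{-m|x-z_1|-m'|x-z_2|}$; factoring out $e^{-m'(|x-z_1|+|x-z_2|)}\le e^{-m'd}$ leaves $e^{-(m-m')|x-z_1|}$, which integrates to a finite constant, and $\int Q_1^mQ_2^m$ is handled identically; positivity is clear since $Q>0$. For (3), again take $z_1<z_2$ and substitute $y=x-z_2$ to centre $Q_2$ at the origin, $\int Q_1Q_2^m\,dx=\int Q(y+d)Q^m(y)\,dy$, and split at $y=-d$. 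On $\{y>-d\}$ one has $y+d>0$, so $Q(y+d)=c_Qe^{-(y+d)}+O(e^{-2(y+d)})$; the main term gives $c_Qe^{-d}\int e^{-y}Q^m(y)\,dy=c_me^{-d}$ up to a tail error $O(e^{-md})$, and the quadratic correction contributes $O(e^{-\min(2,m)d})$. On $\{y<-d\}$ the crude bound $Q(y+d)Q^m(y)\lesssim e^{y+d}e^{my}$ gives $O(e^{-md})$. The derivative statement is the same computation with $Q'(y+d)=-c_Qe^{-(y+d)}+O(e^{-2(y+d)})$, the sign $-c_Q$ reproducing $\tfrac{z_1-z_2}{|z_1-z_2|}=-1$; the case $z_1>z_2$ follows from the reflection symmetry of $Q$.

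For (4) — the substantive step — use $Q_1,Q_2>0$, so $f(Q_i)=Q_i^p$ and $f(Q_1+Q_2)=(Q_1+Q_2)^p$. Since $\partial_xQ_1$ is concentrated near $z_1$, where $Q_2\sim e^{-|z_1-z_2|}\ll Q_1$, expand $(Q_1+Q_2)^p-Q_1^p-Q_2^p=pQ_1^{p-1}Q_2+O\big((Q_1+Q_2)^{p-2}Q_2^2+Q_2^p\big)$ (by Taylor's theorem, not the binomial formula, as $p$ need not be an integer). Pairing the main term with $\partial_xQ_1$ over all of $\mathbb R$ and integrating by parts, $\langle pQ_1^{p-1}Q_2,\partial_xQ_1\rangle=\langle(Q_1^p)',Q_2\rangle=-\langle Q_1^p,\partial_xQ_2\rangle$, and by (3) with $m=p$ (roles of $z_1,z_2$ swapped) together with $c_p=c_Q\int e^{-y}Q^p(y)\,dy=2c_Q^2$ from (1), this equals $\tfrac{z_1-z_2}{|z_1-z_2|}\,2c_Q^2e^{-|z_1-z_2|}+O(e^{-\frac32|z_1-z_2|})$.

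It then remains to show the left-over terms are $O(e^{-\theta|z_1-z_2|})$: split $\mathbb R$ at the midpoint $\tfrac{z_1+z_2}{2}$ and estimate each piece against $|\partial_xQ_1|\lesssim e^{-|x-z_1|}$. Near $z_1$ the correction $O(Q_1^{p-2}Q_2^2+Q_2^p)$ contributes at most $O(e^{-\frac32|z_1-z_2|})$, and the midpoint and far-tail regions are even smaller; the worst one is the cross term $pQ_1Q_2^{p-1}$ near $z_2$, where $\partial_xQ_1\sim Q_1\sim e^{-|z_1-z_2|}$ and pairing with $\partial_xQ_1$ and integrating by parts exposes the slowly decaying factor $Q^{p-2}$, which for $p<3$ degrades this contribution to $e^{-(p-1)|z_1-z_2|}$. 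Combining with the $e^{-\frac32|z_1-z_2|}$ loss inherited from (3) produces the exponent $\theta=\min(p-1,\tfrac32)$. The only genuinely delicate item is (4): one must expand $(Q_1+Q_2)^p$ with a Taylor remainder, correctly separate the cross terms that feed the leading coefficient $2c_Q^2$ from those contributing only to the error, and track the slowly decaying tails $Q^{p-2}$ near $z_2$ to land on the sharp exponent $\theta=\min(p-1,\tfrac32)$; items (1)–(3) are straightforward tail computations.
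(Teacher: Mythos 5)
Your proposal is correct in substance, but it is worth saying how it relates to what the paper actually does: the paper only proves item (1) (by exactly your computation --- integrating against $e^{-x}$ on $[-R,R]$, integrating by parts, and reading off the boundary terms from \eqref{Qes}; whether one substitutes $Q^p=-Q''+Q$ first or integrates the whole equation is immaterial), and then disposes of (2)--(4) with a citation to \cite[Lemma 3.2]{CMY} and \cite[Lemma 2.1]{CMYZ}. So for (2)--(4) you are supplying the argument the paper outsources. Your treatment of (2) is standard and fine, and your identification of the main term in (4) --- pair $pQ_1^{p-1}Q_2=\partial_x(Q_1^p)\cdot Q_2/\partial_xQ_1$ with $\partial_xQ_1$, integrate by parts, apply (3) with $m=p$ and the roles of $z_1,z_2$ swapped, and evaluate $c_p=2c_Q^2$ via (1) --- is exactly the mechanism that produces the coefficient $2c_Q^2$.

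Two caveats. First, in (3) your own error accounting gives $O(e^{-\min(2,m)|z_1-z_2|})$ (tail of the main term, the quadratic correction to $Q(y+d)$, and the region $y<-d$ all contribute $O(e^{-m|z_1-z_2|})$ when $m<2$), which matches the stated bound $e^{-\frac32|z_1-z_2|}$ only when $m\ge\frac32$; for $1<m<\frac32$ your estimates do not reach the claim, and indeed the $e^{-md}$ contributions from the two tails carry different constants and do not cancel, so the bound as stated is not reachable by this route for such $m$. This does not affect any use of (3) in the paper (it is applied with $m=p>2$), but if you state the lemma for all $m>1$ you should either restrict to $m\ge\frac32$ or weaken the error to $e^{-\min(\frac32,m)|z_1-z_2|}$. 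Second, in (4) your attribution of the exponent $p-1$ to the cross term $pQ_2^{p-1}Q_1$ near $z_2$ is not the sharp bookkeeping: restricted to the half-line $x>\frac{z_1+z_2}{2}$ that term is $O(e^{-(p+1)|z_1-z_2|/2})$, which beats $e^{-(p-1)|z_1-z_2|}$ for $p<3$; your bound $e^{-(p-1)|z_1-z_2|}$ is only what one gets by letting the convolution $\int e^{-(p-1)|x-z_2|}e^{-2|x-z_1|}\,dx$ run over all of $\mathbb{R}$. Since $\theta=\min(p-1,\frac32)\le p-1$, this overestimate is harmless and your final inequality stands, but the origin of the $p-1$ in $\theta$ (in the cited references it arises from the sign-indefinite case and from the non-smoothness of $f'$ when $p<3$) is not the term you point to.
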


\begin{proof}
First we prove (1). Let $R>0$. By integration by parts, we obtain that as $R\to \infty$,
\begin{align*}
\int_{-R}^R Q^pe^{-x}&=\int_{-R}^R(-Q^{\prime \prime}+Q)e^{-x}
\\
&=c_Q+\int_{-R}^R(-Q^{\prime}+Q)e^{-x}+O(e^{-2R})
\\
&=2c_Q+O(e^{-2R}).
\end{align*}
Therefore we obtain \eqref{c_qes1}. (2), (3) and (4) are proved by \cite[Lemma 3.2]{CMY}, \cite[Lemma 2.1]{CMYZ} and \eqref{c_qes1}. 
\end{proof}

\subsection{Modulation of the center}

Now we assume that a solution $\vec{u}$ of \eqref{DNKG} is a single solitary wave or an even 2-solitary wave. We have some freedom for the choice of centers $z$. So we choose the center of the solitary wave.

\begin{lemma}\label{modulation}
\begin{enumerate}
\item Let $\sigma=\pm 1$. There exists $\delta>0$ small enough such that for any $\varphi=(\varphi_1,\varphi_2) \in \mathcal{H}$ and $z\in \mathbb{R}$ satisfying
\begin{align*}
|z|>\frac{1}{\delta} , \ \|\varphi_1-\sigma Q(\cdot-z)\|_{H^1}+\|\varphi_2\|_{L^2}\leq \delta,
\end{align*}
there exists a unique $\tilde{z}\in \mathbb{R}$ satisfying $|z-\tilde{z}|\leq \delta$ and
\begin{align*}
\int \{ \varphi_2+2\alpha(\varphi_1-\sigma Q(\cdot-\tilde{z}))\}Q^{\prime}(\cdot-\tilde{z})=0.
\end{align*} 
Moreover. $\tilde{z}$ is $C^1$ with respect to $(\varphi,z)$.

\item Let $\sigma=\pm 1$. There exists $\delta>0$ small enough such that for any $\varphi=(\varphi_1,\varphi_2) \in \mathcal{H}_{rad}$ and $z\in \mathbb{R}$ satisfying
\begin{align*}
z>\frac{1}{\delta} , \ \|\varphi_1-\sigma(Q(\cdot-z)+Q(\cdot+z))\|_{H^1}+\|\varphi_2\|_{L^2}\leq \delta,
\end{align*}
there exists a unique $\tilde{z}\in \mathbb{R}$ satisfying $|z-\tilde{z}|\leq \delta$ and
\begin{align*}
\int \{ \varphi_2+2\alpha\left(\varphi_1-\sigma(Q(\cdot-\tilde{z})+Q(\cdot+\tilde{z}))\right)\}Q^{\prime}(\cdot-\tilde{z})=0.
\end{align*} 
Moreover. $\tilde{z}$ is $C^1$ with respect to $(\varphi,z)$.

\end{enumerate}
\end{lemma}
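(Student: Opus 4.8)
The statement is a standard implicit-function-theorem modulation lemma, so the plan is to set it up as a root-finding problem and verify the nondegeneracy of the relevant derivative. For part (1), define the map
\begin{align*}
G(\varphi,z,\tilde z)=\int\bigl\{\varphi_2+2\alpha\bigl(\varphi_1-\sigma Q(\cdot-\tilde z)\bigr)\bigr\}Q'(\cdot-\tilde z)\,dx.
\end{align*}
First I would observe that $G$ is $C^1$ in all its arguments (the translation $\tilde z\mapsto Q(\cdot-\tilde z)$ is smooth as a map into $H^1$, and the pairing with $Q'(\cdot-\tilde z)\in L^2$ is continuous, using the exponential decay \eqref{Qes} so that all integrals converge absolutely). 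Next I would compute the ``reference'' value: when $\varphi_1=\sigma Q(\cdot-z)$, $\varphi_2=0$ and $\tilde z=z$, we get $G=2\alpha\sigma\int 0\cdot Q'(\cdot-z)=0$, so $(\varphi_1,\varphi_2,z,\tilde z)=(\sigma Q(\cdot-z),0,z,z)$ is a solution. Then I would differentiate in $\tilde z$ at this reference point:
\begin{align*}
\partial_{\tilde z}G\big|_{\mathrm{ref}}=2\alpha\sigma\int Q'(\cdot-z)Q'(\cdot-z)\,dx=2\alpha\sigma\|Q'\|_{L^2}^2,
\end{align*}
where the term coming from differentiating $Q'(\cdot-\tilde z)$ vanishes because $\varphi_2+2\alpha(\varphi_1-\sigma Q(\cdot-\tilde z))\equiv 0$ at the reference point. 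Since $\alpha>0$ and $\|Q'\|_{L^2}^2>0$, this derivative is nonzero (of sign $\sigma$), so the implicit function theorem applies.

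The only subtlety is that the reference point depends on $z$, which ranges over the noncompact set $\{|z|>1/\delta\}$, so I cannot just invoke the implicit function theorem at a single point. To handle this I would work with the translated variables: by the substitution $x\mapsto x+\tilde z$ and writing $w=\tilde z-z$, one sees that $G(\varphi,z,\tilde z)$ depends on $\varphi$ only through the $H^1$- and $L^2$-sizes of $\varphi_1(\cdot+z)-\sigma Q(\cdot-w)$ and $\varphi_2(\cdot+z)$, which are controlled uniformly in $z$ by the hypothesis $\|\varphi_1-\sigma Q(\cdot-z)\|_{H^1}+\|\varphi_2\|_{L^2}\le\delta$. Hence the problem reduces to a $z$-independent implicit function theorem in the variable $w$ near $w=0$, with the derivative bound $\partial_w G=2\alpha\sigma\|Q'\|_{L^2}^2+O(\delta)$ valid uniformly; a quantitative version (e.g. a contraction-mapping or Newton argument on the interval $|w|\le\delta$) then yields existence, uniqueness in $|z-\tilde z|\le\delta$, and the $C^1$ dependence of $\tilde z=\tilde z(\varphi,z)$, for $\delta$ chosen small and $1/\delta$ large enough so the interaction and error terms are negligible.

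For part (2) the argument is identical after replacing $Q(\cdot-\tilde z)$ by $Q(\cdot-\tilde z)+Q(\cdot+\tilde z)$ in the definition of $G$ and restricting to $\varphi\in\mathcal H_{rad}$ and $z>1/\delta$. The only new point is to check that the cross terms arising from $Q(\cdot+\tilde z)$ do not spoil the nondegeneracy: differentiating in $\tilde z$ produces the main term $2\alpha\sigma\|Q'\|_{L^2}^2$ plus contributions like $\int Q'(\cdot-\tilde z)Q'(\cdot+\tilde z)$ and terms involving $Q(\cdot+\tilde z)$ paired against $Q'(\cdot-\tilde z)$, all of which are $O(e^{-z})$ by the soliton-interaction estimates of Lemma \ref{sil} (items \eqref{si1}, \eqref{si2} and the pointwise bound \eqref{Qes}). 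Taking $1/\delta$ large enough makes these smaller than half the main term, so $\partial_{\tilde z}G$ stays bounded away from $0$ uniformly, and the same implicit function theorem / contraction argument closes the proof. The main obstacle is purely bookkeeping: making the implicit function theorem uniform in the noncompact parameter $z$ (and, in part (2), absorbing the exponentially small soliton-interaction corrections); there is no conceptual difficulty once the problem is reduced to the translated variable.
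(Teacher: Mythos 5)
Your proposal is correct and is the standard implicit-function-theorem modulation argument: the key nondegeneracy $\partial_{\tilde z}G=2\alpha\sigma\|Q'\|_{L^2}^2+O(\delta)+O(e^{-z})$ is computed correctly (including the observation that the term from differentiating $Q'(\cdot-\tilde z)$ vanishes at the reference point), and the reduction to the translated variable $w=\tilde z-z$ legitimately handles uniformity in the noncompact parameter $z$, since the orthogonality functional does not see the origin. The paper itself does not write this argument out: it only records the symmetry identity $\int\varphi\, Q'(\cdot-z)=-\int\varphi\, Q'(\cdot+z)$ for even $\varphi$ (which is what makes a single modulation parameter suffice in case (2)) and then defers to Lemma 4.1 of \cite{IN}, whose proof is the same quantitative IFT/contraction scheme you describe; so your write-up is a self-contained version of the argument the paper invokes by citation, and nothing in it would fail.
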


\begin{proof}
We note that if $\varphi$ is an even function, for any $z>0$, 
\begin{align*}
\int \varphi Q^{\prime}(\cdot-z)=-\int \varphi Q^{\prime}(\cdot+z)
\end{align*}
holds. Then by \cite[Lemma 4.1]{IN} we complete the proof.

\end{proof}

By Lemma \ref{modulation}, if a global solution $\vec{u}$ is a single solitary wave, there exist $\sigma=\pm1$ and $z(t)$ such that
\begin{align*}
\begin{aligned}
\lim_{t\to \infty}\left(\|u(t)-\sigma Q(\cdot-z(t))\|_{H^1}+\|{\partial}_tu(t)\|_{L^2}\right)&=0,
\\
\lim_{t\to \infty}|z(t)|&=\infty,
\\
\int\{{\partial}_tu(t)+2\alpha(u(t)-\sigma Q(\cdot-z(t)))\}Q^{\prime}(\cdot-z(t))&=0,
\end{aligned}
\end{align*}
and if a global solution $\vec{u}$ is an even 2-solitary wave, there exist $\sigma=\pm1$ and $z(t)$ such that
\begin{align*}
\begin{aligned}
\vec{u}\ \mbox{is}\ \mbox{an}\ \mbox{even}\ \mbox{solution},
\\
\lim_{t\to \infty}\left(\|u(t)-\sigma(Q(\cdot-z(t))+Q(\cdot+z(t)))\|_{H^1}+\|{\partial}_tu(t)\|_{L^2}\right)&=0,
\\
\lim_{t\to \infty}z(t)&=\infty,
\\
\int\{{\partial}_tu(t)+2\alpha\left(u(t)-\sigma(Q(\cdot-z(t))+Q(\cdot+z(t)))\right)\}Q^{\prime}(\cdot-z(t))&=0.
\end{aligned}
\end{align*}

Furthermore by \eqref{invariance}, we only need the following proposition to prove Theorem \ref{centerdistance}.
\begin{proposition}\label{ia}
\begin{enumerate}
\item Let $\gamma<0$. If a global solution $\vec{u}$ satisfies 
\begin{align}\label{sa1}
\left\{
\begin{aligned}
&\lim_{t\to \infty}\left(\|u(t)-Q(\cdot-z(t))\|_{H^1}+\|{\partial}_tu(t)\|_{L^2}\right)=0,
\\
&\lim_{t\to \infty}z(t)=\infty,
\\
&\|u(0)-Q(\cdot-z(0))\|_{H^1}+\|{\partial}_tu(0)\|_{L^2}\ll1,\ z(0)\gg 1,
\\
&\int\{{\partial}_tu(t)+2\alpha(u(t)-Q(\cdot-z(t)))\}Q^{\prime}(\cdot-z(t))=0,
\end{aligned}
\r.
\end{align}
then $z(t)-\frac{1}{2}\log{t}\lesssim 1$.
\item Let $\gamma\leq-2$. If a global solution $\vec{u}$ satisfies
\begin{align}\label{sa2}
\left\{
\begin{aligned}
&\vec{u}\ \mbox{is}\ \mbox{an}\ \mbox{even}\ \mbox{solution},
\\
&\lim_{t\to \infty}\left(\|u(t)-Q(\cdot-z(t))-Q(\cdot+z(t))\|_{H^1}+\|{\partial}_tu(t)\|_{L^2}\right)=0,
\\
&\lim_{t\to\infty}z(t)=\infty,
\\
&\|u(0)-Q(\cdot-z(0))-Q(\cdot+z(0))\|_{H^1}+\|{\partial}_tu(0)\|_{L^2}\ll1,\ z(0)\gg 1,
\\
&\int\{{\partial}_tu(t)+2\alpha(u(t)-Q(\cdot-z(t))-Q(\cdot+z(t)))\}Q^{\prime}(\cdot-z(t))=0,
\end{aligned}
\r.
\end{align}
then $z(t)-\frac{1}{2}\log{t}\lesssim 1$.
\end{enumerate}

\end{proposition}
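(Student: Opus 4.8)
The plan is to run a modulation argument near the (single or even-2) solitary wave and derive a differential inequality for $z(t)$ that forces the claimed logarithmic growth bound. I would begin by writing $u(t) = \sigma Q(\cdot-z(t)) + \varepsilon(t)$ in case (1) (resp. $u(t) = \sigma(Q(\cdot-z(t))+Q(\cdot+z(t))) + \varepsilon(t)$ in case (2)), where the orthogonality condition $\int\{\partial_t u + 2\alpha(u - \sigma Q(\cdot-z))\}Q'(\cdot-z) = 0$ from Lemma \ref{modulation} fixes $z(t)$. Differentiating this orthogonality relation along the flow, and using \eqref{DNKG}, gives an equation of the form
\begin{align*}
\dot z(t)\,\langle Q', Q'\rangle = \langle f(\sigma Q(\cdot-z)) - (\text{source terms}), Q'(\cdot-z)\rangle + (\text{error in } \varepsilon),
\end{align*}
where the dominant source term is the potential-soliton interaction $\gamma\langle\delta_0 u, Q'(\cdot-z)\rangle$ plus, in case (2), the soliton-soliton interaction $\langle f(Q_1+Q_2)-f(Q_1)-f(Q_2), Q'(\cdot-z)\rangle$. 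Using \eqref{Qes} and Lemma \ref{sil}, the potential term is $\approx -\gamma c_Q^2 e^{-2z(t)}$ (positive since $\gamma<0$, hence repulsive, consistent with $z(t)\to\infty$), and in case (2) the soliton-soliton term is $\approx -2c_Q^2 e^{-2z(t)}$ (negative, attractive); since $\gamma\le-2$ the repulsion wins, i.e. $-\gamma\ge 2$, so the net coefficient is still nonnegative and $z$ increases.

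The next step is the energy/virial bookkeeping: from the energy decay \eqref{energydecay} and the soliton resolution convergence $\|\varepsilon(t)\|_\mathcal{H}\to 0$, together with $\partial_t u\in L^2([0,\infty)\times\mathbb{R})$ (from \eqref{energydecay}) and $\|\partial_t u(t)\|_{L^2}\to 0$ (from \eqref{ptc}), I would control the error terms: $\|\varepsilon(t)\|_{H^1} = o(1)$ and the modulation parameter satisfies $|\dot z(t)| = o(1)$ as well, plus an integrated bound $\int_0^\infty|\dot z(t)|^2\,dt < \infty$ coming from the fact that $\dot z$ is essentially slaved to $\partial_t u$ near the soliton. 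The key coercivity input is Lemma \ref{Lproperty}(2) applied to $\varepsilon$ (after projecting off the unstable/kernel directions $\phi$ and $Q'$), which upgrades smallness in $H^1$ to quantitative decay estimates. Combining these, one obtains that $z(t)$ is eventually monotone increasing and satisfies a differential inequality of the form
\begin{align*}
\dot z(t) \lesssim e^{-2z(t)} + (\text{integrable / small error}),
\end{align*}
so that after integrating, $e^{2z(t)} \lesssim t + C$, which is exactly $z(t) - \frac12\log t \lesssim 1$.

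The main obstacle I expect is making the error terms genuinely subordinate to the leading exponential $e^{-2z(t)}$ in the $\dot z$ equation. A priori, the convergence $\|\varepsilon(t)\|_\mathcal{H}\to 0$ from Theorem \ref{sr} is qualitative with no rate, whereas $e^{-2z(t)}$ is itself going to zero, so one cannot naively absorb $\|\varepsilon(t)\|\cdot(\text{something})$ into the main term. The resolution should be a bootstrap: use the coercivity of $\mathcal{L}$ (Lemma \ref{Lproperty}) together with the damping structure to show $\|\varepsilon(t)\|_{H^1}$ decays at a rate controlled by $\int_t^\infty\|\partial_s u\|_{L^2}^2\,ds$ and by $e^{-2z(t)}$ itself, feeding this back into the modulation equation. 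A secondary subtlety in case (2) is keeping track of the evenness constraint, which forces the two solitons to have symmetric centers $\pm z(t)$ so that only one modulation parameter appears, and ensuring the $\gamma\le -2$ threshold is exactly what guarantees the repulsion coefficient $-\gamma - 2 \ge 0$ does not cause the two effects to cancel at leading order — this is the analogue of the sign condition that made Theorem \ref{existencesoliton}(2) work.
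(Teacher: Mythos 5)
Your overall architecture (modulation decomposition, modulation equation for $\dot z$ with leading term $e^{-2z}$, coercive energy functional, bootstrap to slave $\|\vec{\epsilon}\|_{\mathcal{H}}^2$ to $e^{-2z}$) matches the paper's strategy, and you correctly identify the central difficulty that the convergence $\|\vec{\epsilon}(t)\|_{\mathcal{H}}\to 0$ is rate-free. However, there is a genuine gap: you never explain how to control the exponentially \emph{unstable} mode. The linearized operator $\mathcal{L}$ has a negative eigenvalue $-\nu^2$ with eigenfunction $\phi$, and in the damped wave dynamics this produces a mode $a_+(t)=\int(\eta-\nu^-\epsilon)\phi_+$ satisfying $\frac{d}{dt}a_+\approx \nu^+a_+$ with $\nu^+=-\alpha+\sqrt{\alpha^2+\nu^2}>0$; the damping does not remove this instability. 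The modulation orthogonality only eliminates the translation direction $Q'$, so the coercivity of Lemma \ref{Lproperty}(2) leaves $\langle\epsilon,\phi_+\rangle^2$ uncontrolled, and without a separate argument the Lyapunov functional is not coercive and the bootstrap cannot close. The paper devotes Proposition \ref{instaes} to exactly this point: a contradiction bootstrap showing that if $|a_+|$ ever dominates $(\mathcal{G}+e^{-2z})^{1/2}$ then it grows exponentially and the solution exits the soliton neighborhood, contradicting the hypothesis \eqref{sa1} (resp.\ \eqref{sa2}). "Projecting off the unstable direction" is not available here because nothing in the equation enforces that projection to vanish; it must be \emph{proved} small using the convergence assumption.

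A secondary issue is the final integration step. An integrable error in $\dot z\lesssim e^{-2z}+(\mathrm{error})$ would indeed suffice via Gronwall for $w=e^{2z}$, but the actual error $\|\vec{\epsilon}(t)\|_{\mathcal{H}}^2$ is only $O(e^{-2z(t)})=O(1/t)$ even after the full argument, hence not integrable; so one genuinely needs the pointwise bound $\|\vec{\epsilon}\|_{\mathcal{H}}^2\lesssim \mathcal{G}+e^{-2z}\lesssim e^{-2z}$. Obtaining it from $\mathcal{G}'\leq-\delta\mathcal{G}+Ce^{-2z}$ requires comparing the damping rate $\delta$ with the growth rate of $e^{2z(s)}$ along the flow (the paper first shows $\frac{d}{dt}e^{2z}\leq\hat{\delta}e^{2z}$ with $\hat{\delta}<\delta$, then evaluates the Duhamel integral), a step your sketch does not articulate. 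Finally, a small bookkeeping remark: for $\sigma=1$ the net interaction coefficient is $-2\gamma-2$, which is nonnegative for $\gamma\leq-1$, not $\gamma\leq-2$; the threshold $\gamma\leq-2$ comes from the variational structure ($r_\gamma=2J_0(Q)$) needed for Theorem \ref{existencesoliton}(2), and in any case the sign is irrelevant for the stated \emph{upper} bound on $z$, which only uses $|z'|\lesssim e^{-2z}+\|\vec{\epsilon}\|_{\mathcal{H}}^2$.
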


\subsection{Estimates of the center}

We assume that a solution of \eqref{DNKG} satisfies
\begin{align}\label{sa}
\left\{
\begin{aligned}
&\lim_{t\to \infty}\left(\|u(t)-Q(\cdot-z(t))-\sigma Q(\cdot+z(t))\|_{H^1}+\|{\partial}_tu(t)\|_{L^2}\right)=0,
\\
&\lim_{t\to\infty}z(t)=\infty,
\\
&\|u(0)-Q(\cdot-z(0))-\sigma Q(\cdot+z(0))\|_{H^1}+\|{\partial}_tu(0)\|_{L^2}\ll1,\ z(0)\gg 1,
\\
&\int\{{\partial}_tu(t)+2\alpha(u(t)-Q(\cdot-z(t))-\sigma Q(\cdot+z(t)))\}Q^{\prime}(\cdot-z(t))=0,
\end{aligned}
\r.
\end{align}
where $\sigma=0,1$. We define $Q_{\pm}$, $\vec{Q}_{\pm}$, and $\phi_{\pm}$ as
\begin{align*}
Q_{\pm}(x)=Q(x\mp z(t)) , \ \vec{Q}_{\pm}=
\begin{pmatrix}
Q_{\pm}\\
0   \\
\end{pmatrix}
,\ \phi_{\pm}(x)=\phi(x\mp z(t)).
\end{align*}
Furthermore we define $\vec{\epsilon}=(\epsilon,\eta)$ as 
\begin{align*}
\vec{\epsilon}=
\begin{pmatrix}
\epsilon\\
\eta \\
\end{pmatrix}
=\vec{u}-\vec{Q}_+-\sigma \vec{Q}_-.
\end{align*}
Then \eqref{DNKG} can be rewritten as
\begin{align}\label{lode}
\left\{
	\begin{aligned}
		{\partial}_t\epsilon&=\eta+z^{\prime}({\partial}_xQ_+-\sigma {\partial}_xQ_-),
		\\
		{\partial}_t\eta&=-\left\{-{\partial}_x^2\epsilon+\epsilon-(pQ_+^{p-1}+\sigma pQ_-^{p-1})\epsilon\right\}-2\alpha \eta 
		\\
		&\quad +\gamma \delta_0u+(f(u)-f(Q_+)-f(\sigma Q_-)-f^{\prime}(Q_+)\epsilon-f^{\prime}(\sigma Q_-)\epsilon).
	\end{aligned}
	\r.
\end{align}
In addition we define $\mathcal{L}_t$ and $R$ as
\begin{align*}
\mathcal{L}_t&=-{\partial}_x^2+1-pQ_+^{p-1}-\sigma pQ_-^{p-1},
\\
R&=f(u)-f(Q_+)-f(\sigma Q_-)-f^{\prime}(Q_+)\epsilon-f^{\prime}(\sigma Q_-)\epsilon.
\end{align*}
Then we rewrite \eqref{sa} and \eqref{lode} as
\begin{align}\label{sa*}
\left\{
\begin{aligned}
&\lim_{t\to \infty}\|\vec{\epsilon}(t)\|_{\mathcal{H}}=0 ,\ \lim_{t\to \infty} z(t)=\infty,\ \|\vec{\epsilon}(0)\|_{\mathcal{H}}\ll1,\ z(0)\gg 1,
\\
&\int(\eta+2\alpha \epsilon){\partial}_xQ_+=0,
\end{aligned}
\r.
\end{align}
and 
\begin{align}\label{lode2}
\frac{d}{dt}
\begin{pmatrix}
\epsilon
\\
\eta
\end{pmatrix}
=
\begin{pmatrix}
0 & 1
\\
-\mathcal{L}_t & -2\alpha
\end{pmatrix}
\begin{pmatrix}
\epsilon
\\
\eta
\end{pmatrix}
+
\begin{pmatrix}
z^{\prime}({\partial}_xQ_+-\sigma {\partial}_xQ_-)
\\
\gamma\delta_0u+R(t)
\end{pmatrix}
.
\end{align} 
Furthermore we define $R_1$ as
 \begin{align}\label{Res}
 R=p\sigma Q_+^{p-1}Q_-+R_1.
 \end{align}
First we give some estimates about $R$ and $R_1$. 
\begin{lemma}\label{R1es}
Let $\theta=\min{(2p-2,3)}$. Then the following estimates hold:
\begin{align}
\left|\int R_1{\partial}_xQ_+\right|&\lesssim e^{-\theta z}+\|\vec{\epsilon}\|_{\mathcal{H}}^2,\label{R1es1}
\\
\left|\int R_1\phi_+\right|&\lesssim e^{-\theta z}+\|\vec{\epsilon}\|_{\mathcal{H}}^2,\label{R1es2}
\\
\left|\int R{\partial}_xQ_+\right|&\lesssim e^{-2z}+\|\vec{\epsilon}\|_{\mathcal{H}}, \label{Res1}
\\
\left|\int R\phi_+\right|&\lesssim e^{-2z}+\|\vec{\epsilon}\|_{\mathcal{H}}^2,\label{Res2}
\\
\|R\|_{L^2}&\lesssim e^{-2z}+\|\vec{\epsilon}\|_{\mathcal{H}}^2. \label{Res3}
\end{align}
\end{lemma}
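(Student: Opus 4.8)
The plan is to estimate each quantity by splitting the nonlinear remainder into its ``interaction'' part and its ``$\vec\epsilon$-dependent'' part, and then using the pointwise bounds on $Q$, $Q'\sim\phi^{2/(p+1)}$, together with Lemma \ref{sil}. Recall $R = f(u)-f(Q_+)-f(\sigma Q_-)-f'(Q_+)\epsilon-f'(\sigma Q_-)\epsilon$ with $u = Q_++\sigma Q_-+\epsilon$, and by \eqref{Res} we have isolated the leading soliton-interaction term $p\sigma Q_+^{p-1}Q_-$, so that $R_1 = R - p\sigma Q_+^{p-1}Q_-$. I would first record the elementary algebraic inequality, valid for the $C^1$ nonlinearity $f(u)=|u|^{p-1}u$ when $p>2$,
\begin{align*}
\bigl|f(a+b+c)-f(a)-f(b)-f'(a)c-f'(b)c\bigr|\lesssim (|a|^{p-2}+|b|^{p-2})(|a||b|+|c|^2)+|a||b|^{p-1}+|a|^{p-1}|b|+|c|^{p},
\end{align*}
which, after subtracting the leading term $p\sigma Q_+^{p-1}Q_-$ (this is precisely $f'(Q_+)\cdot(\sigma Q_-)$ frozen at $b=0$), leaves for $R_1$ only ``higher-order'' interaction terms that are either $O(Q_+^{p-2}Q_-^2)$, $O(Q_+Q_-^{p-1})$, $O(Q_+^{p-1}Q_-)\cdot Q_-$-type with an extra decaying factor, or $O(\text{(soliton)}^{p-2}\epsilon^2 + \epsilon^p)$.

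For \eqref{R1es1} and \eqref{R1es2}: pair $R_1$ against ${\partial}_xQ_+$ (resp.\ $\phi_+$), both of which are bounded and localized near $z$. The pure-interaction contributions are integrals of products of translated copies of $Q$ of total exponent $\geq p$ with at least the factor $Q_-$ appearing to a power making the integral $\lesssim e^{-\theta z}$ via \eqref{si1}--\eqref{si2} with $\theta=\min(2p-2,3)$ — the cross terms $Q_+^{p-2}Q_-^2$ give $e^{-2z}$-type decay (indeed faster), and the genuinely sharp contribution is of size $e^{-(2p-2)z}$ or $e^{-3z}$ depending on which is smaller. The $\epsilon$-dependent contributions are controlled by $\|(Q_++Q_-)^{p-2}\|_{L^\infty}\|\epsilon\|_{L^2}^2 + \|\epsilon\|_{L^{2p}}^{2p}\lesssim \|\vec\epsilon\|_{\mathcal H}^2$ using $\|\vec\epsilon\|_{\mathcal H}\ll1$ from \eqref{sa*} and Sobolev embedding $H^1\hookrightarrow L^{2p}$. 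This yields both bounds.

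For \eqref{Res1}, \eqref{Res2}, \eqref{Res3}: now $R$ still contains the leading term $p\sigma Q_+^{p-1}Q_-$. When tested against $\phi_+$ (localized at $z$), $Q_-$ is evaluated near $z$, i.e.\ $Q_-\sim e^{-2z}$ there, so $\int Q_+^{p-1}Q_-\phi_+\lesssim e^{-2z}$ by \eqref{si2}; combined with the $R_1$ estimate \eqref{R1es2} this gives \eqref{Res2}. For \eqref{Res1}, testing against ${\partial}_xQ_+$ the leading term contributes $\lesssim\int Q_+^{p-1}Q_-\lesssim e^{-z}$ by Lemma \ref{sil}(3) (this is why the bound there is only $e^{-2z}+\|\vec\epsilon\|_{\mathcal H}$ — wait, it is stated as $e^{-2z}$; one must be careful: $\int Q_+^{p-1}Q_-|{\partial}_xQ_+|$ has ${\partial}_xQ_+$ localized at $z$ so again $Q_-\sim e^{-2z}$ there, giving $e^{-2z}$, not $e^{-z}$), plus the $R_1$ term $\lesssim e^{-\theta z}+\|\vec\epsilon\|_{\mathcal H}^2\lesssim e^{-2z}+\|\vec\epsilon\|_{\mathcal H}$. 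For the $L^2$ bound \eqref{Res3}, estimate $\|Q_+^{p-1}Q_-\|_{L^2}\lesssim e^{-2z}$ (the product is pointwise $\lesssim e^{-2z}$ times an $L^2$ bump) and $\|R_1\|_{L^2}\lesssim e^{-2z}+\|(Q_++Q_-)^{p-2}\epsilon^2\|_{L^2}+\|\epsilon^p\|_{L^2}\lesssim e^{-2z}+\|\vec\epsilon\|_{\mathcal H}^2$, again via $H^1\hookrightarrow L^\infty, L^{2p}, L^{2(p-1)}$.

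The main obstacle is bookkeeping the exponents: one must check in each cross term exactly which soliton is being evaluated in the region where the test function (or the remaining soliton factors) concentrates, so that the ``$e^{-2z}$ vs $e^{-z}$ vs $e^{-\theta z}$'' distinction comes out correctly — in particular that the leading interaction term $p\sigma Q_+^{p-1}Q_-$, although of size $e^{-z}$ in $L^1$, only ever contributes at order $e^{-2z}$ when paired with objects localized at the center $z$ of $Q_+$. The rest is routine application of Lemma \ref{sil} and Sobolev estimates with $\|\vec\epsilon\|_{\mathcal H}$ small.
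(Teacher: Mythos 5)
Your plan follows the same route as the paper: Taylor-expand $f$ around the soliton sum, isolate the leading interaction term $p\sigma Q_+^{p-1}Q_-$, estimate the pure-interaction remainders with Lemma \ref{sil}, and control the $\epsilon$-dependent part by Sobolev embedding. Your self-correction about why $\int R\,{\partial}_xQ_+$ is $O(e^{-2z})$ rather than $O(e^{-z})$ (because $Q_-$ is of size $e^{-2z}$ where ${\partial}_xQ_+$ and $\phi_+$ live) is exactly the observation the paper uses for \eqref{Res1} and \eqref{Res2}.

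One concrete omission: your enumeration of the remainder terms of $R_1$, and your bound for ``the $\epsilon$-dependent contributions'' by $\|(Q_++Q_-)^{p-2}\|_{L^\infty}\|\epsilon\|_{L^2}^2+\|\epsilon\|_{L^{2p}}^{2p}$, miss the terms that are \emph{linear} in $\epsilon$, namely $f'(\sigma Q_-)\epsilon$ and $\bigl(f'(Q_++\sigma Q_-)-f'(Q_+)\bigr)\epsilon$. These are genuinely present in $R_1$ and are only bounded pointwise by $\bigl(|Q_+|^{p-2}|Q_-|+|Q_-|^{p-1}\bigr)|\epsilon|$; your displayed algebraic inequality has no term linear in $c$ with a mixed $a,b$ coefficient, so it cannot produce them, and they are not quadratic in $\|\vec\epsilon\|_{\mathcal H}$. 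They must be handled separately: tested against ${\partial}_xQ_+$ or $\phi_+$ they give $O(e^{-2z}\|\vec\epsilon\|_{\mathcal H})$ by Cauchy--Schwarz (using $\|Q_+^{p-1}Q_-\|_{L^2}+\|Q_+Q_-^{p-1}\|_{L^2}\lesssim e^{-2z}$), which the paper then absorbs via the Young-type inequality $e^{-2z}\|\vec\epsilon\|_{\mathcal H}\lesssim e^{-3z}+\|\vec\epsilon\|_{\mathcal H}^3\lesssim e^{-\theta z}+\|\vec\epsilon\|_{\mathcal H}^2$. This is a fixable bookkeeping gap rather than a wrong approach, but as written the plan would not close \eqref{R1es1}--\eqref{R1es2}.
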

\begin{proof}
First $R_1$ satisfies
\begin{align*}
R_1&= f(u)-f(Q_+)-f(\sigma Q_-)-f^{\prime}(Q_+)\epsilon-f^{\prime}(\sigma Q_-)\epsilon-\sigma f^{\prime}(Q_+)Q_-
\\
&= f(Q_++\sigma Q_-+\epsilon)-f(Q_++\sigma Q_-)-f^{\prime}(Q_++\sigma Q_-)\epsilon
\\
&\quad +\left(f^{\prime}(Q_++\sigma Q_-)-f^{\prime}(Q_+)\right)\epsilon
\\
&\quad +f(Q_++\sigma Q_-)-f(Q_+)-f(\sigma Q_-)-f^{\prime}(Q_+)(\sigma Q_-)
\\
&\quad -f^{\prime}(\sigma Q_-)\epsilon.
\end{align*}
By direct computation we have
\begin{align*}
&|f(Q_++\sigma Q_-+\epsilon)-f(Q_++\sigma Q_-)-f^{\prime}(Q_++\sigma Q_-)\epsilon|\lesssim |\epsilon|^2+|\epsilon|^p,
\\
&|(f^{\prime}(Q_++\sigma Q_-)-f^{\prime}(Q_+))\epsilon|\lesssim \left(|Q_+|^{p-2}|Q_-|+|Q_-|^{p-1}\right)|\epsilon|,
\\
&|f(Q_++\sigma Q_-)-f(Q_+)-f(\sigma Q_-)-f^{\prime}(Q_+)(\sigma Q_-)|\lesssim |Q_+|^{p-2}|Q_-|^2+|Q_+||Q_-|^{p-1},
\\
&|f^{\prime}(\sigma Q_-)\epsilon|\lesssim|Q_-|^{p-1}|\epsilon|.
\end{align*}
Therefore by Lemma \ref{sil} and $e^{-2z}\|\vec{\epsilon}\|_{\mathcal{H}}\lesssim e^{-3z}+\|\vec{\epsilon}\|_{\mathcal{H}}^3$, we obtain \eqref{R1es1} and \eqref{R1es2}. Next we estimate \eqref{Res3}. $R$ satisfies
\begin{align*}
R&=f(Q_++\sigma Q_-+\epsilon)-f(Q_++\sigma Q_-)-f^{\prime}(Q_++\sigma Q_-)\epsilon
\\
&\quad+\left( f^{\prime}(Q_++\sigma Q_-)-f^{\prime}(Q_+)-f^{\prime}(Q_-)\right)\epsilon
\\
&\quad+f(Q_++\sigma Q_-)-f(Q_+)-f(\sigma Q_-)-f^{\prime}(Q_+)(\sigma Q_-).
\end{align*}
Furthermore, we have
\begin{align*}
|\left( f^{\prime}(Q_++\sigma Q_-)-f^{\prime}(Q_+)-f^{\prime}(Q_-)\right)\epsilon|\lesssim e^{-z}|\epsilon|,
\end{align*}
and \eqref{Res3} holds. Last, we prove \eqref{Res1} and \eqref{Res2}. By Lemma \ref{sil}, we have
\begin{align*}
\left|\int Q_+^{p-1}Q_-{\partial}_xQ_+\right|\lesssim e^{-2z},
\\
\left|\int Q_+^{p-1}Q_-\phi_+\right| \lesssim e^{-2z}.
\end{align*}
Thus we obtain \eqref{Res1} and \eqref{Res2}. 



\end{proof}

Next we estimate the distance between the origin and the center of the solitary wave. 

\begin{proposition}\label{zpes}
Let $\vec{u}$ be a global solution of \eqref{DNKG} satisfying \eqref{sa*} for some $z(t)$ and $\sigma=0,1$. Then $z^{\prime}$ satisfies 
\begin{align}\label{zes}
\begin{aligned}
2\alpha \|Q^{\prime}\|_{L^2}^2z^{\prime}&=-\gamma c_Qe^{-z}u(t,0)-2\sigma c_Q^2 e^{-2z}-R_2+O(e^{-\theta z}+\|\vec{\epsilon}\|_{\mathcal{H}}^3)
\\
&=\{-\gamma(1+\sigma)-2\sigma\}c_Q^2e^{-2z}-\gamma c_Qe^{-z}\epsilon(t,0)-R_2+O(e^{-\theta z}+\|\vec{\epsilon}\|_{\mathcal{H}}^3),
\end{aligned}
\end{align}
where $\theta=\min{(2p-2,3)}$ and 
\begin{align*}
R_2=\int R_1{\partial}_xQ_+.
\end{align*}
\end{proposition}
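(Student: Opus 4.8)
The plan is to derive the ODE for $z^{\prime}$ by differentiating the orthogonality condition $\int(\eta+2\alpha\epsilon){\partial}_xQ_+=0$ from \eqref{sa*} in time and then reading off the leading-order balance. First I would compute $\frac{d}{dt}\int(\eta+2\alpha\epsilon){\partial}_xQ_+$ using the system \eqref{lode2}: the time derivative hits three places, namely $\partial_t\eta$, $2\alpha\,\partial_t\epsilon$, and $\partial_t({\partial}_xQ_+)=-z^{\prime}{\partial}_x^2Q_+$. Substituting $\partial_t\epsilon=\eta+z^{\prime}({\partial}_xQ_+-\sigma{\partial}_xQ_-)$ and $\partial_t\eta=-\mathcal{L}_t\epsilon-2\alpha\eta+z^{\prime}\cdot 0+\gamma\delta_0 u+R$ (from \eqref{lode} / \eqref{lode2}), and using $\mathcal{L}_t$ self-adjoint together with $\mathcal{L}Q^{\prime}=0$, most of the $\epsilon$-linear terms either cancel or get absorbed: $\langle \mathcal{L}_t\epsilon,{\partial}_xQ_+\rangle = \langle\epsilon,\mathcal{L}_t{\partial}_xQ_+\rangle$ and $\mathcal{L}_t{\partial}_xQ_+ = \mathcal{L}{\partial}_xQ_+ - \sigma pQ_-^{p-1}{\partial}_xQ_+ = -\sigma pQ_-^{p-1}{\partial}_xQ_+$, which is $O(e^{-2z}\|\vec\epsilon\|_{\mathcal H})$ after pairing, hence of order $e^{-3z}+\|\vec\epsilon\|_{\mathcal H}^3$. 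The $-2\alpha\eta$ and $\eta$ contributions combine with the $2\alpha\,\partial_t\epsilon$ piece so that the surviving main term is $2\alpha z^{\prime}\int({\partial}_xQ_+)^2 = 2\alpha\|Q^{\prime}\|_{L^2}^2 z^{\prime}$ (the $\sigma{\partial}_xQ_-$ cross term being $O(e^{-z})$-exponentially small by Lemma \ref{sil}, in fact $O(e^{-2z})$ and can be folded into the error or tracked separately if needed).

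Next I would isolate the two genuinely nontrivial source terms. The first is $\gamma\langle\delta_0 u,{\partial}_xQ_+\rangle = \gamma u(t,0){\partial}_xQ_+(0) = \gamma u(t,0)Q^{\prime}(-z)$; by the asymptotics \eqref{Qes}, $Q^{\prime}(-z) = c_Q e^{-z}+O(e^{-2z})$, giving the term $\gamma c_Q e^{-z}u(t,0)$ up to acceptable error — note the sign of $Q'$ at $-z$: since $z\gg1$, $-z<0$ so $Q'(-z)>0$ and equals $c_Qe^{-z}+O(e^{-2z})$, consistent with the claimed $-\gamma c_Q e^{-z}u(t,0)$ after moving to the other side of the identity. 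The second is the soliton-interaction term: from the decomposition \eqref{Res}, $R = p\sigma Q_+^{p-1}Q_- + R_1$, and $\langle R,{\partial}_xQ_+\rangle = p\sigma\int Q_+^{p-1}Q_-{\partial}_xQ_+ + R_2$ with $R_2=\int R_1{\partial}_xQ_+$. For the explicit piece, changing variables and using Lemma \ref{sil}(3) with $m=p-1$ — more precisely using $\int ({\partial}_xQ)(x)Q(x+2z)^{p-1}\,dx$ and the constant $c_{p-1}=c_Q\int e^{-x}Q^{p-1}$ together with $\int Q^{p-1}e^{-x}$ related to $2c_Q/$(something) — one extracts $-2\sigma c_Q^2 e^{-2z}+O(e^{-\frac52 z})$. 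Here I would lean on Lemma \ref{sil}(4) and \eqref{c_qes1}; indeed $\langle f(Q_++Q_-)-f(Q_+)-f(Q_-),{\partial}_xQ_+\rangle$ already supplies a $2c_Q^2e^{-z}$-type term in the same-sign case, and the $p\sigma Q_+^{p-1}Q_-$ term is the corresponding piece here, yielding $-2\sigma c_Q^2e^{-2z}$. Collecting everything and dividing by nothing (the identity is $0 = 2\alpha\|Q'\|_{L^2}^2 z' + \gamma c_Q e^{-z}u(t,0) + 2\sigma c_Q^2 e^{-2z} + R_2 + O(e^{-\theta z}+\|\vec\epsilon\|_{\mathcal H}^3)$) gives the first line of \eqref{zes}.

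For the second line I would substitute $u(t,0) = \epsilon(t,0) + Q_+(0) + \sigma Q_-(0) = \epsilon(t,0) + Q(-z)+\sigma Q(z) = \epsilon(t,0) + (1+\sigma)c_Q e^{-z}+O(e^{-2z})$, using \eqref{Qes} and $Q(z)=Q(-z)$. Then $\gamma c_Q e^{-z}u(t,0) = \gamma c_Q e^{-z}\epsilon(t,0) + \gamma(1+\sigma)c_Q^2 e^{-2z}+O(e^{-3z})$, so the total $e^{-2z}$ coefficient becomes $\gamma(1+\sigma)+2\sigma$, matching the claimed $\{-\gamma(1+\sigma)-2\sigma\}c_Q^2e^{-2z}$ after the sign flip when the term is moved across the identity; the $O(e^{-3z})$ error is dominated by $O(e^{-\theta z})$ since $\theta\le 3$, and the $\|\vec\epsilon\|_{\mathcal H}^3$ error was already present.

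\textbf{Main obstacle.} The delicate point is bookkeeping the error terms to the stated order $O(e^{-\theta z}+\|\vec\epsilon\|_{\mathcal H}^3)$ with $\theta=\min(2p-2,3)$: one must be careful that every $\epsilon$-linear term that is not manifestly a total derivative either cancels by the orthogonality condition, is absorbed into $z^{\prime}$, or carries an exponentially small prefactor $e^{-2z}$ so that cross terms $e^{-2z}\|\vec\epsilon\|_{\mathcal H}$ convert to $e^{-3z}+\|\vec\epsilon\|_{\mathcal H}^3$ by Young's inequality (exactly the trick already used in Lemma \ref{R1es}). The term $\langle\epsilon,\mathcal L_t{\partial}_xQ_+\rangle$ requires a little care because $\mathcal L{\partial}_xQ_+=0$ only for the unperturbed operator, so the $\sigma$-coupling remainder $-\sigma pQ_-^{p-1}{\partial}_xQ_+$ must be paired with $\epsilon$ and estimated by Lemma \ref{sil}; when $\sigma=0$ this term vanishes identically. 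The extraction of the precise constant $2\sigma c_Q^2$ in front of $e^{-2z}$ from $p\sigma\int Q_+^{p-1}Q_-{\partial}_xQ_+$ is the other spot demanding the sharp asymptotics \eqref{Qes} and the identity \eqref{c_qes1} rather than a crude bound; everything else is routine once the orthogonality relation is differentiated correctly.
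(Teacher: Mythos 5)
Your proposal is correct and follows essentially the same route as the paper's proof: differentiate the orthogonality condition in \eqref{sa*}, use \eqref{lode} and the self-adjointness of $\mathcal{L}_t$ (with $\mathcal{L}\,\partial_xQ_+=0$) to isolate the coefficient $2\alpha\|Q^{\prime}\|_{L^2}^2$ of $z^{\prime}$ together with the delta term $\gamma u(t,0)Q^{\prime}(-z)$ and the interaction term $\int R\,\partial_xQ_+$, the latter being evaluated by writing $pQ_+^{p-1}\partial_xQ_+=\frac{d}{dx}(Q_+^p)$, integrating by parts, and applying Lemma \ref{sil}(3) with $m=p$ (not $m=p-1$ as you first suggest) and \eqref{c_qes1}, which gives $c_p=2c_Q^2$. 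Apart from a transient sign/indexing wobble in your discussion of that interaction term, the final identity you record, $0=2\alpha\|Q^{\prime}\|_{L^2}^2z^{\prime}+\gamma c_Qe^{-z}u(t,0)+2\sigma c_Q^2e^{-2z}+R_2+O(e^{-\theta z}+\|\vec{\epsilon}\|_{\mathcal{H}}^3)$, is exactly the paper's \eqref{zpeq}--\eqref{Res4}, and your passage to the second line of \eqref{zes} matches as well.
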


\begin{proof}
Differentiating  the second equation of \eqref{sa*}, we obtain
\begin{align}\label{zpeq}
\begin{aligned}
0&=\int({\partial}_t\eta+2\alpha{\partial}_t\epsilon){\partial}_xQ_+-z^{\prime}\int (\eta+2\alpha\epsilon){\partial}_x^2Q_+
\\
&=z^{\prime}\left(2\alpha\|Q^{\prime}\|_{L^2}^2-2\alpha \sigma \int {\partial}_xQ_-{\partial}_xQ_+-\int (\eta+2\alpha\epsilon){\partial}_x^2Q_+\right)
\\
&\quad +\gamma\int\delta_0(u\cdot {\partial}_xQ_+)+\sigma p\int Q_-^{p-1}{\partial}_xQ_+\epsilon+\int R{\partial}_xQ_+.
\end{aligned}
\end{align}
By Lemma \ref{sil} and \eqref{Res}, we have
\begin{align}\label{Res4}
\begin{aligned}
\int R{\partial}_xQ_+&=\sigma \int\frac{d}{dx}(Q_+^p)Q_-+R_2
\\
&=2\sigma c_Q^2e^{-2z}+R_2.
\end{aligned}
\end{align}
Furthermore by \eqref{Qes}, \eqref{si1}, \eqref{zpeq} and \eqref{Res4}, we have
\begin{align*}
0&=z^{\prime}\left(2\alpha\|Q^{\prime}\|_{L^2}^2+O(e^{-\frac{3}{2}z}+\|\vec{\epsilon}\|_{\mathcal{H}})\right)
\\
&\quad+\gamma c_Qe^{-z}u(t,0)+2\sigma c_Q^2e^{-2z}+R_2+O(\|\vec{\epsilon}\|_{\mathcal{H}}^3+e^{-3z}).
\end{align*}
Therefore we obtain \eqref{zes}.

\end{proof}

\begin{remark}
By \eqref{R1es1}, $R_2$ satisfies 
\begin{align}\label{R2es}
|R_2|\lesssim \|\vec{\epsilon}\|_{\mathcal{H}}^2+e^{-\theta z},
\end{align}
where $\theta=\min{(2p-2,3)}$.
\end{remark}
By Proposition \ref{zpes} and \eqref{R2es}, $z$ satisfies
\begin{align}\label{zpes1}
|z^{\prime}|\lesssim e^{-2z}+\|\vec{\epsilon}\|_{\mathcal{H}}^2.
\end{align}

\begin{remark}
To prove Theorem \ref{centerdistance}, we only need \eqref{zpes1}. However, by Proposition \ref{zpes} and \eqref{R2es} we conjecture that if $\sigma=1, \gamma<-1$ or $\sigma=0, \gamma<0$, $z$ satisfies $z\sim \frac{1}{2}\log{t}$.
\end{remark}

Next we define some notations to extract eigenmodes with real eigenvalues around the solitary wave. We define $a_+, a_-, a_0$ as
\begin{align*}
a_{\pm}(t)&=\int(\eta-\nu^{\mp}\epsilon)\phi_+,
\\
a_0(t)&=\int\eta {\partial}_xQ_+.
\end{align*}
\begin{proposition}\label{aest}
Let $\vec{u}$ be a solution of \eqref{DNKG} satisfying \eqref{sa*} for some $z(t)$ and $\sigma=0,1$. Then $a_{\pm}$ and $a_0$ satisfy
\begin{align}
\left|\frac{d}{dt}a_{\pm}-\nu^{\pm}a_{\pm}\right|&\lesssim e^{-2z}+\|\vec{\epsilon}\|_{\mathcal{H}}^2,\label{apmes}
\\
\left|\frac{d}{dt}a_0+2\alpha a_0\right|&\lesssim  e^{-2z}+\|\vec{\epsilon}\|_{\mathcal{H}}^2.\label{a0es}
\end{align} 
\end{proposition}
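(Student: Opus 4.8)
The plan is to differentiate the definitions of $a_{\pm}$ and $a_0$ in time, insert the evolution equation \eqref{lode2}, and then carefully identify which terms produce the linear part ($\nu^{\pm}a_{\pm}$, respectively $-2\alpha a_0$) and which terms are error terms of size $O(e^{-2z}+\|\vec{\epsilon}\|_{\mathcal{H}}^2)$. First I would compute
\begin{align*}
\frac{d}{dt}a_{\pm}=\int({\partial}_t\eta-\nu^{\mp}{\partial}_t\epsilon)\phi_+ -z^{\prime}\int(\eta-\nu^{\mp}\epsilon){\partial}_x\phi_+,
\end{align*}
and substitute ${\partial}_t\epsilon=\eta+z^{\prime}({\partial}_xQ_+-\sigma{\partial}_xQ_-)$ and ${\partial}_t\eta=-\mathcal{L}_t\epsilon-2\alpha\eta+\gamma\delta_0 u+R$ from \eqref{lode2}. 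The terms involving $z^{\prime}$ are controlled by \eqref{zpes1}, which gives $|z^{\prime}|\lesssim e^{-2z}+\|\vec{\epsilon}\|_{\mathcal{H}}^2$, so they are harmless after multiplying by the bounded quantities $\|\phi_+\|$, $\|\partial_x\phi_+\|$, etc.; similarly $\int R\phi_+$ is absorbed by \eqref{Res2}.

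The key algebraic step is the spectral identity for the leading linear part: using Lemma \ref{Lproperty}(1), $\mathcal{L}\phi=-\nu^2\phi$, together with the definition $\nu^{\pm}=-\alpha\pm\sqrt{\alpha^2+\nu^2}$, which satisfy $\nu^+\nu^-=-\nu^2$ and $\nu^++\nu^-=-2\alpha$, one checks that the "diagonal" operator
\begin{align*}
\begin{pmatrix} 0 & 1 \\ -\mathcal{L}(\cdot-z) & -2\alpha\end{pmatrix}
\end{align*}
acts on the combination $(\eta-\nu^{\mp}\epsilon)$ paired against $\phi_+$ exactly so as to reproduce $\nu^{\pm}a_{\pm}$; concretely, integrating $\int(-\mathcal{L}\epsilon(\cdot-z))\phi_+=-\nu^2\int\epsilon\phi_+$ by self-adjointness and collecting the $\eta$ and $\epsilon$ contributions yields $\frac{d}{dt}a_{\pm}=\nu^{\pm}a_{\pm}+(\text{errors})$. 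The discrepancy between $\mathcal{L}_t$ and the single-soliton operator $\mathcal{L}(\cdot-z)$ is $-\sigma pQ_-^{p-1}$, and $\int Q_-^{p-1}\epsilon\phi_+$ is bounded by $\|Q_-^{p-1}\phi_+\|_{L^2}\|\vec{\epsilon}\|_{\mathcal{H}}\lesssim e^{-z}\|\vec{\epsilon}\|_{\mathcal{H}}\lesssim e^{-2z}+\|\vec{\epsilon}\|_{\mathcal{H}}^2$ by Lemma \ref{sil}; likewise $\int(pQ_+^{p-1}\epsilon)\phi_+$ is already built into $\mathcal{L}$, so no extra error there. The $\delta_0$ term contributes $\gamma u(t,0)\phi_+(0)=\gamma u(t,0)\phi(-z)$, and since $\phi(-z)\lesssim e^{-\frac{p+1}{p-1}z}\lesssim e^{-2z}$ while $|u(t,0)|$ is bounded, this is also an acceptable error.

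For $a_0$ the computation is shorter: $\frac{d}{dt}a_0=\int{\partial}_t\eta\,{\partial}_xQ_+-z^{\prime}\int\eta\,{\partial}_x^2Q_+$, and inserting ${\partial}_t\eta$ gives $-\int\mathcal{L}_t\epsilon\,{\partial}_xQ_+-2\alpha a_0+\gamma u(t,0){\partial}_xQ_+(0)+\int R\,{\partial}_xQ_+$. Here $\mathcal{L}(\cdot-z){\partial}_xQ_+=0$ by Lemma \ref{Lproperty}(1), so $\int\mathcal{L}_t\epsilon\,{\partial}_xQ_+=-\sigma p\int Q_-^{p-1}\epsilon\,{\partial}_xQ_+$, which is $O(e^{-2z}\|\vec{\epsilon}\|_{\mathcal{H}})$ by \eqref{si1}; the $\delta_0$ term gives $\gamma u(t,0){\partial}_xQ(-z)\lesssim e^{-z}|u(t,0)|$, which is borderline and must be bounded using $|u(t,0)|\lesssim e^{-z}+\|\vec\epsilon\|_{\mathcal H}$ (from $u=Q_++\sigma Q_-+\epsilon$ and $Q(z)\lesssim e^{-z}$), giving $\lesssim e^{-2z}+\|\vec\epsilon\|_{\mathcal H}^2$; and $\int R\,{\partial}_xQ_+$ is controlled by \eqref{Res1} once one peels off the leading $2\sigma c_Q^2e^{-2z}$ piece via \eqref{Res4} — which is itself $O(e^{-2z})$ — leaving $R_2=O(e^{-\theta z}+\|\vec\epsilon\|_{\mathcal H}^2)$ by \eqref{R2es}. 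The main obstacle I anticipate is the bookkeeping for $a_{\pm}$: getting the coefficients $\nu^{\pm}$ to come out exactly right requires using both relations $\nu^+\nu^-=-\nu^2$ and $\nu^++\nu^-=-2\alpha$ at the correct moment, and one has to be careful that the cross term $-\nu^{\mp}\int\eta\phi_+$ coming from $-\nu^{\mp}{\partial}_t\epsilon$ combines with $-2\alpha\int\eta\phi_+$ and $\int(-\mathcal{L}\epsilon)\phi_+$ to reconstitute $\nu^{\pm}(\int\eta\phi_+-\nu^{\mp}\int\epsilon\phi_+)$; everything else is routine once Lemma \ref{sil} and \eqref{zpes1} are in hand.
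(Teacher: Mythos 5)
Your proposal follows essentially the same route as the paper's proof: differentiate $a_{\pm}$ and $a_0$, substitute the evolution equation \eqref{lode2}, use $\mathcal{L}\phi=-\nu^2\phi$ and $\mathcal{L}Q^{\prime}=0$ together with $\nu^++\nu^-=-2\alpha$ and $\nu^+\nu^-=-\nu^2$ to extract the terms $\nu^{\pm}a_{\pm}$ and $-2\alpha a_0$, and control everything else via \eqref{zpes1}, Lemma \ref{sil}, and the estimates on $R$. One small correction: from the definition $\phi(x)=(\cosh(\frac{p-1}{2}x))^{-\frac{p+1}{p-1}}$ one gets $\phi(-z)\sim e^{-\frac{p+1}{2}z}$, not $e^{-\frac{p+1}{p-1}z}$, and this is not $\lesssim e^{-2z}$ when $2<p<3$; the delta term is nevertheless admissible because $|u(t,0)|\lesssim e^{-z}+\|\vec{\epsilon}\|_{\mathcal{H}}$ — the same refinement you already invoke in the $a_0$ case — so the argument goes through as written once that bound is used for $a_{\pm}$ as well.
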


\begin{proof}
As the proof of Proposition \ref{zpes}, we compute $a_{\pm}$ and $a_0$. First, we have
\begin{align}\label{apmes1}
\frac{d}{dt}a_{\pm}&=\int ({\partial}_t\eta-\nu^{\mp}{\partial}_t\epsilon)\phi_+-z^{\prime}\int(\eta-\nu^{\mp}\epsilon){\partial}_x\phi_+.
\end{align}
We note that 
\begin{align}\label{apmes2}
\begin{aligned}
 {\partial}_t\eta+\nu^{\mp}{\partial}_t\epsilon&=-(-{\partial}_x^2\epsilon+\epsilon-pQ_+^{p-1}\epsilon)-(2\alpha+\nu^{\mp})\eta
\\
&\quad+\sigma pQ_-^{p-1}\epsilon+\gamma \delta_0u+R(t)-\nu^{\mp}z^{\prime}({\partial}_xQ_+^{\prime}-\sigma {\partial}_xQ_-).
\end{aligned}
\end{align}
By \eqref{Res2}, \eqref{zpes1}, \eqref{apmes1}, and \eqref{apmes2}, we obtain 
\begin{align*}
\frac{d}{dt}a_{\pm}&=\int \left\{-(-{\partial}_x^2\epsilon+\epsilon-pQ_+^{p-1}\epsilon)-(2\alpha+\nu^{\mp})\eta\right\} \phi_++O(\|\vec{\epsilon}\|_{\mathcal{H}}^2+e^{-2z}),
\\
&\quad\int \left\{-(-{\partial}_x^2\epsilon+\epsilon-pQ_+^{p-1}\epsilon)-(2\alpha+\nu^{\mp})\eta\right\} \phi_+
\\
&=\int \epsilon\cdot\left(-{\partial}_x^2\phi_++\phi_+-pQ_+^{p-1}\phi_+\right)-(2\alpha+\nu^{\mp})\phi_+
\\
&=\int (\nu^2\epsilon+\nu^{\pm}\eta)\phi_+
\\
&=\nu^{\pm}a_{\pm}.
\end{align*}
Therefore we obtain \eqref{apmes}. Second we have
\begin{align*}
\frac{d}{dt}a_0=-2\alpha\int \eta {\partial}_xQ_++\int (p\sigma Q_-^{p-1}\epsilon+\gamma\delta_0u+R){\partial}_xQ_+.
\end{align*}
By the same argument as the proof of Proposition \ref{zpes}, we have
\begin{align*}
\left|\int (p\sigma Q_-^{p-1}\epsilon+\gamma\delta_0u+R){\partial}_xQ_+\right|\lesssim e^{-2z}+\|\vec{\epsilon}\|_{\mathcal{H}}^2.
\end{align*}
Thus we obtain \eqref{a0es}.

\end{proof}

By Proposition \ref{aest} and $|a_0|+|a_+|+|a_-|\lesssim \|\vec{\epsilon}\|_{\mathcal{H}}$, we have
\begin{align}
\left|\frac{d}{dt}(a_{\pm})^2-2\nu^{\pm}(a_{\pm})^2\right|&\lesssim |a_{\pm}|(e^{-2z}+\|\vec{\epsilon}\|_{\mathcal{H}}^2),\label{apmes3}
\\
\left|\frac{d}{dt}(a_0)^2+4\alpha (a_0)^2\right|&\lesssim e^{-3z}+\|\vec{\epsilon}\|_{\mathcal{H}}^3. \label{a0es2}
\end{align}

\subsection{Energy estimates}

For $\mu>0$ small enough we denote $\rho=2\alpha-\mu$. We define $\mathcal{E}$ as 
\begin{align}\label{Edef}
\mathcal{E}(t)=\frac{1}{2}\int \{({\partial}_x\epsilon)^2+(1-\rho\mu)\epsilon^2+(\eta+\mu\epsilon)^2-(pQ_+^{p-1}+p\sigma Q_-^{p-1})\epsilon^2\}-\frac{\gamma}{2}|u(t,0)|^2.
\end{align}
\begin{lemma}
Let $\vec{u}$ be a solution of \eqref{DNKG} satisfying \eqref{sa*} for some $z(t)$ and $\sigma=0,1$. Then for $t>0$, 
\begin{align}\label{Ees11}
\mathcal{E}^{\prime}(t)=-2\mu\mathcal{E}(t)-(\rho-\mu)\|\eta+\mu\epsilon\|_{L^2}^2-\mu\gamma c_Q(1+\sigma)e^{-z}u(t,0)+O(e^{-3z}+\|\vec{\epsilon}\|_{\mathcal{H}}^3).
\end{align}
\end{lemma}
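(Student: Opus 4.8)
The plan is to differentiate \eqref{Edef} along the flow, insert \eqref{lode2}, and use the symmetry of $\mathcal{L}_t$ to force the decisive cancellation. Abbreviate $\psi:=\partial_xQ_+-\sigma\partial_xQ_-$, so that the first line of \eqref{lode} reads $\partial_t\epsilon=\eta+z^{\prime}\psi$, recall $\rho=2\alpha-\mu$, and rewrite
\begin{align*}
\mathcal{E}(t)=\frac12\langle\mathcal{L}_t\epsilon,\epsilon\rangle-\frac{\rho\mu}{2}\|\epsilon\|_{L^2}^2+\frac12\|\eta+\mu\epsilon\|_{L^2}^2-\frac{\gamma}{2}|u(t,0)|^2 .
\end{align*}
Differentiating and using $\partial_t\mathcal{L}_t=(p-1)z^{\prime}\big(pQ_+^{p-2}\partial_xQ_+-\sigma pQ_-^{p-2}\partial_xQ_-\big)$, $\partial_tu(t,0)=\eta(t,0)$, and then $\partial_t\eta+\mu\partial_t\epsilon=-\mathcal{L}_t\epsilon-\rho\eta+\gamma\delta_0u+R+\mu z^{\prime}\psi$, the terms $\langle\mathcal{L}_t\epsilon,\eta\rangle$ coming from the $(\partial_x\epsilon)^2$ and $\epsilon^2$ parts and $-\langle\eta,\mathcal{L}_t\epsilon\rangle$ coming from the $(\eta+\mu\epsilon)^2$ part cancel against each other.

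After this, I would collect the remaining quadratic terms. Expanding $-\rho\langle\eta+\mu\epsilon,\eta\rangle=-\rho\|\eta+\mu\epsilon\|_{L^2}^2+\rho\mu\langle\eta,\epsilon\rangle+\rho\mu^2\|\epsilon\|_{L^2}^2$, the two cross terms $\rho\mu\langle\eta,\epsilon\rangle$ and $-\rho\mu\langle\epsilon,\eta\rangle$ — the latter coming from $\partial_t\big(-\tfrac{\rho\mu}{2}\|\epsilon\|_{L^2}^2\big)$ — cancel exactly; this is precisely what the coefficients $1-\rho\mu$ and $-\tfrac{\rho\mu}{2}$ in \eqref{Edef} are tuned for. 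What survives is $-\mu\langle\mathcal{L}_t\epsilon,\epsilon\rangle+\rho\mu^2\|\epsilon\|_{L^2}^2-\rho\|\eta+\mu\epsilon\|_{L^2}^2$, and comparing with the expansion of $-2\mu\mathcal{E}(t)$ one checks that this equals $-2\mu\mathcal{E}(t)-(\rho-\mu)\|\eta+\mu\epsilon\|_{L^2}^2-\gamma\mu|u(t,0)|^2$. It then remains to show that the genuinely lower-order pieces — the $z^{\prime}\psi$-pairings, the $(\partial_t\mathcal{L}_t)\epsilon$ term and the $R$ term — are $O(e^{-3z}+\|\vec{\epsilon}\|_{\mathcal{H}}^3)$, and that the leftover $\delta_0$ terms contribute $+\gamma\mu|u(t,0)|^2-\mu\gamma c_Q(1+\sigma)e^{-z}u(t,0)$ modulo the same error, which together reproduce \eqref{Ees11}.

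For the lower-order pieces I would use $|\langle(\partial_t\mathcal{L}_t)\epsilon,\epsilon\rangle|\lesssim|z^{\prime}|\,\|\vec{\epsilon}\|_{\mathcal{H}}^2$ and, for each $z^{\prime}\psi$-pairing, a bound $\lesssim|z^{\prime}|\,\|\vec{\epsilon}\|_{\mathcal{H}}$ (note $\mathcal{L}_t\psi\in L^2$ uniformly in $t$, and by the orthogonality in \eqref{sa*} one even has $\langle\eta+\mu\epsilon,\partial_xQ_+\rangle=-\rho\langle\epsilon,\partial_xQ_+\rangle$); invoking \eqref{zpes1} together with the elementary bound $e^{-2z}\|\vec{\epsilon}\|_{\mathcal{H}}\lesssim e^{-3z}+\|\vec{\epsilon}\|_{\mathcal{H}}^3$ puts all of these in $O(e^{-3z}+\|\vec{\epsilon}\|_{\mathcal{H}}^3)$, and likewise $|\langle\eta+\mu\epsilon,R\rangle|\lesssim\|\vec{\epsilon}\|_{\mathcal{H}}\big(e^{-2z}+\|\vec{\epsilon}\|_{\mathcal{H}}^2\big)$ is of the same order by \eqref{Res3}. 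Finally, the $\delta_0$ term from the equation, $\langle\eta+\mu\epsilon,\gamma\delta_0u\rangle=\gamma u(t,0)\big(\eta(t,0)+\mu\epsilon(t,0)\big)$, combines with $-\gamma u(t,0)\eta(t,0)$ from $\partial_t\big(-\tfrac{\gamma}{2}|u(t,0)|^2\big)$ into $\gamma\mu\,u(t,0)\epsilon(t,0)$; since $u(t,0)-\epsilon(t,0)=Q_+(0)+\sigma Q_-(0)=(1+\sigma)Q(z)$, $Q(z)=c_Qe^{-z}+O(e^{-2z})$ by \eqref{Qes}, and $|u(t,0)|\lesssim e^{-z}+\|\vec{\epsilon}\|_{\mathcal{H}}$, this equals $\gamma\mu|u(t,0)|^2-\mu\gamma c_Q(1+\sigma)e^{-z}u(t,0)+O(e^{-3z}+\|\vec{\epsilon}\|_{\mathcal{H}}^3)$, as needed.

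I expect the main obstacle to be purely organizational: carrying the two cancellations ($\langle\mathcal{L}_t\epsilon,\eta\rangle$ and $\rho\mu\langle\epsilon,\eta\rangle$) correctly through a lengthy computation, and — because the $\delta_0$ contribution is of size $e^{-2z}$ and not $e^{-3z}$ — extracting its leading part exactly rather than absorbing it into the remainder. A secondary, routine point is justifying the term-by-term differentiation of $\mathcal{E}$, which I would handle through the weak form of \eqref{lode2} (or by density, using the smoothing of the flow).
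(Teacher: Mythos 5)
Your proposal is correct and follows essentially the same route as the paper: differentiate $\mathcal{E}$, substitute \eqref{lode}, use the self-adjointness of $\mathcal{L}_t$ so that $\langle\mathcal{L}_t\epsilon,\eta\rangle$ cancels against $-\langle\eta+\mu\epsilon,\mathcal{L}_t\epsilon\rangle$ up to $-\mu\langle\mathcal{L}_t\epsilon,\epsilon\rangle$, absorb the remaining quadratic terms into $-2\mu\mathcal{E}-(\rho-\mu)\|\eta+\mu\epsilon\|_{L^2}^2-\gamma\mu|u(t,0)|^2$, control the $z^{\prime}$-, $(\partial_t\mathcal{L}_t)$- and $R$-terms via \eqref{zpes1} and \eqref{Res3}, and extract the leading part of the $\delta_0$ contribution from $u(t,0)-\epsilon(t,0)=(1+\sigma)Q(z)=(1+\sigma)c_Qe^{-z}+O(e^{-2z})$. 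The bookkeeping (in particular the exact cancellation of $\gamma u(t,0)\partial_tu(t,0)$ with the derivative of $-\tfrac{\gamma}{2}|u(t,0)|^2$, using $\partial_tu(t,0)=\eta(t,0)$) matches the paper's computation.
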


\begin{proof}
By formal calculation, we have
\begin{align*}
\mathcal{E}^{\prime}(t)&=\int {\partial}_x\epsilon{\partial}_x{\partial}_t\epsilon+(1-\rho\mu)\epsilon{\partial}_t\epsilon+(\eta+\mu\epsilon)({\partial}_t\eta+\mu{\partial}_t\epsilon)-(pQ_+^{p-1}+p\sigma Q_-^{p-1})\epsilon{\partial}_t\epsilon
\\
&\quad+\frac{p-1}{2}z^{\prime}\left\{\int (Q_+^{p-2}{\partial}_xQ_+-{\sigma}Q_-^{p-2}{\partial}_xQ_-)\epsilon^2\right\}-\gamma u(t,0){\partial}_tu(t,0).
\end{align*}
Furthermore by \eqref{lode} and integration by parts, we have
\begin{align*}
&\quad\int {\partial}_x\epsilon{\partial}_x{\partial}_t\epsilon+(1-\rho\mu)\epsilon{\partial}_t\epsilon+(\eta+\mu\epsilon)({\partial}_t\eta+\mu{\partial}_t\epsilon)-(pQ_+^{p-1}+p\sigma Q_-^{p-1})\epsilon{\partial}_t\epsilon
\\
&=\int (-{\partial}_x^2\epsilon+(1-\rho\mu)\epsilon+\mu(\eta+\mu\epsilon)-(pQ_+^{p-1}+p\sigma Q_-^{p-1})\epsilon){\partial}_t\epsilon+(\eta+\mu\epsilon){\partial}_t\eta,
\\
&\quad\int (-{\partial}_x^2\epsilon+(1-\rho\mu)\epsilon+\mu(\eta+\mu\epsilon)-(pQ_+^{p-1}+p\sigma Q_-^{p-1})\epsilon){\partial}_t\epsilon
\\
&=\int (\mathcal{L}_t\epsilon-\mu(\rho-\mu)\epsilon+\mu\eta){\partial}_t\epsilon,
\\
&\quad\int (\eta+\mu\epsilon){\partial}_t\eta
\\
&=\int (\eta+\mu\epsilon)(-\mathcal{L}_t\epsilon-2\alpha\eta+\gamma\delta_0u+R(t)).
\end{align*}
By \eqref{Res3} and \eqref{zpes1},  we have
\begin{align*}
&\int (\mathcal{L}_t\epsilon-\mu(\rho-\mu)\epsilon+\mu\eta){\partial}_t\epsilon=\int (\mathcal{L}_t\epsilon-\mu(\rho-\mu)\epsilon+\mu\eta)\eta+O(e^{-3z}+\|\vec{\epsilon}\|_{\mathcal{H}}^3),
\\
&\int  (\eta+\mu\epsilon)(\gamma\delta_0u+R(t))=\gamma{\partial}_tu(t,0)u(t,0)+\mu\gamma\epsilon(t,0)u(t,0)+O(e^{-3z}+\|\vec{\epsilon}\|_{\mathcal{H}}^3).
\end{align*}
Gathering these estimates we obtain 
\begin{align*}
\mathcal{E}^{\prime}(t)&=\int\left[ \{(\mathcal{L}_t\epsilon-\mu(\rho-\mu)\epsilon+\mu\eta)\eta\}+(\eta+\mu\epsilon)(-\mathcal{L}_t\epsilon-2\alpha\eta)\right]
\\
&\quad+\mu\gamma\epsilon(t,0)u(t,0)+O(e^{-3z}+\|\vec{\epsilon}\|_{\mathcal{H}}^3)
\\
&=-2\mu\mathcal{E}(t)-(\rho-\mu)\|\eta+\mu\epsilon\|_{L^2}^2+\mu\gamma (\epsilon(t,0)-u(t,0))u(t,0)+O(e^{-3z}+\|\vec{\epsilon}\|_{\mathcal{H}}^3)
\\
&=-2\mu\mathcal{E}(t)-(\rho-\mu)\|\eta+\mu\epsilon\|_{L^2}^2-\mu\gamma c_Q(1+\sigma)e^{-z}u(t,0)+O(e^{-3z}+\|\vec{\epsilon}\|_{\mathcal{H}}^3),
\end{align*}
and we complete the proof.
\end{proof}

Furthermore by \eqref{zes}, we obtain
\begin{align}
\begin{aligned}\label{Ees1}
\mathcal{E}^{\prime}&=-2\mu \mathcal{E}-\mu\gamma c_Q^2(1+\sigma)^2e^{-2z}-\mu\gamma c_Q(1+\sigma )e^{-z}\epsilon(t,0)
\\
&\quad-(\rho-\mu)\|\eta+\mu\epsilon\|_{L^2}^2+O(e^{-3z}+\|\vec{\epsilon}\|_{\mathcal{H}}^3)
\\
&=-2\mu\mathcal{E}(t)+C_{\circ}z^{\prime}+C_{\bullet}e^{-2z}-\mu(1+\sigma)R_2
\\
&\quad-(\rho-\mu)\|\eta+\mu\epsilon\|_{L^2}^2+O(e^{-3z}+\|\vec{\epsilon}\|_{\mathcal{H}}^3),
\end{aligned}
\end{align}
where 
\begin{align*}
C_{\circ}=2(1+\sigma)\alpha\|Q^{\prime}\|_{L^2}^2\mu,\ C_{\bullet}=2\sigma(1+\sigma)c_Q^2\mu .
\end{align*}
We define
\begin{align*}
b(t)&=\left(\int \epsilon {\partial}_xQ_-\right)^2+\left(\int \epsilon \phi_-\right)^2.
\end{align*}

\begin{lemma}\label{enees}
Let $\vec{u}$ be a solution of \eqref{DNKG} satisfying \eqref{sa*} for some $z(t)$ and $\sigma=0,1$.  Then there exists $C>0$ large enough such that for all $t\geq0$,
\begin{align}\label{enees1}
C\left\{a_+(t)^2+a_-(t)^2+a_0(t)^2+\sigma b(t)\right\}+\mathcal{E}(t)+\frac{\gamma}{2}|u(t,0)|^2\geq \frac{1}{C}\|\vec{\epsilon}(t)\|_{\mathcal{H}}^2.
\end{align}
Furthermore if $\gamma<0$, there exists $\tilde{C}>0$ large enough such that for all $t\geq0$
\begin{align}\label{Eupperbound}
\begin{split}
&\quad\mathcal{E}+C\left(a_+^2+a_-^2+a_0^2+\sigma b\right)+e^{-2z}
\\
&\leq \tilde{C}\left\{ (E_{\gamma}(\vec{u})-(1+\sigma)J_0(Q))+(a_+^2+a_-^2+a_0^2+\sigma b^2)+e^{-2z}\right\}.
\end{split}
\end{align}
\end{lemma}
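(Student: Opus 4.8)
The plan is to reduce both inequalities to the coercivity of $\mathcal{L}_t$ together with an expansion of the energy around the (multi-)soliton, establishing \eqref{enees1} first and then using it to close \eqref{Eupperbound}. For \eqref{enees1}, I would first note that $\mathcal{E}(t)+\frac{\gamma}{2}|u(t,0)|^2$ equals the quadratic form $\frac12\langle\mathcal{L}_t\epsilon,\epsilon\rangle-\frac{\rho\mu}{2}\|\epsilon\|_{L^2}^2+\frac12\|\eta+\mu\epsilon\|_{L^2}^2$ in $\vec{\epsilon}=(\epsilon,\eta)$, so the delta term drops out and the claim is purely variational. When $\sigma=0$, $\mathcal{L}_t$ is a translate of $\mathcal{L}$ and Lemma \ref{Lproperty}(2) applies verbatim. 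When $\sigma=1$, I would localise with a partition of unity $\zeta_+^2+\zeta_-^2=1$, $\zeta_\pm\equiv1$ near $\pm z(t)$, transitioning \emph{slowly} across the gap so that the localisation error is $\lesssim z^{-2}\|\epsilon\|_{L^2}^2$; on $\mathrm{supp}\,\zeta_\pm$ the term $pQ_\mp^{p-1}$ is $O(e^{-cz})$, so Lemma \ref{Lproperty}(2) gives, for $z$ large, $\langle\mathcal{L}_t\epsilon,\epsilon\rangle\gtrsim\|\epsilon\|_{H^1}^2-C(\langle\epsilon,\phi_+\rangle^2+\langle\epsilon,\phi_-\rangle^2+\langle\epsilon,{\partial}_xQ_+\rangle^2+\langle\epsilon,{\partial}_xQ_-\rangle^2)$. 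Choosing $\mu$ small so that $\rho\mu\le c/2$ and using $\|\eta\|_{L^2}^2\lesssim\|\eta+\mu\epsilon\|_{L^2}^2+\mu^2\|\epsilon\|_{L^2}^2$, the form controls $\|\vec{\epsilon}\|_{\mathcal{H}}^2$ modulo those scalar products. Finally $\int\epsilon\phi_+=(a_+-a_-)/(\nu^+-\nu^-)$ from the definitions of $a_\pm$, the orthogonality in \eqref{sa*} gives $\int\epsilon\,{\partial}_xQ_+=-a_0/(2\alpha)$, and for the even $2$-solitary waves ($\vec{u}$, hence $\vec{\epsilon}$, even) also $\int\epsilon\phi_-=\int\epsilon\phi_+$ and $\int\epsilon\,{\partial}_xQ_-=-\int\epsilon\,{\partial}_xQ_+$; so all four scalar products are $\lesssim|a_+|+|a_-|+|a_0|$ and $\sigma b\lesssim a_+^2+a_-^2+a_0^2$, which gives \eqref{enees1}.

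For \eqref{Eupperbound} (with $\gamma<0$), I would Taylor-expand $E_{\gamma}(\vec{u})-(1+\sigma)J_0(Q)$ around $\vec{Q}_++\sigma\vec{Q}_-$ as $\mathcal{I}_0+\mathcal{I}_1+\mathcal{Q}+\mathcal{I}_{\ge3}$: the constant term $\mathcal{I}_0$ (soliton--soliton interaction plus the $-\frac\gamma2(1+\sigma)^2Q(z)^2$ contribution) is $O(e^{-2z})$ by Lemma \ref{sil}; the linear term collapses, since $-Q''+Q=Q^p$ makes $\langle Q_\pm,\epsilon\rangle_{H^1}=\langle Q_\pm^p,\epsilon\rangle$ cancel the matching $L^{p+1}$ contribution, to the multi-soliton residual paired with $\epsilon$ plus $-\gamma(1+\sigma)Q(z)\epsilon(0)$, whence $|\mathcal{I}_1|\lesssim e^{-z}\|\vec{\epsilon}\|_{\mathcal{H}}$; the higher-order part satisfies $|\mathcal{I}_{\ge3}|\lesssim\|\vec{\epsilon}\|_{\mathcal{H}}^3$; and the quadratic part is $\mathcal{Q}=\frac12\langle\mathcal{L}_t\epsilon,\epsilon\rangle+\frac12\|\eta\|_{L^2}^2-\frac\gamma2|\epsilon(0)|^2+O(e^{-cz}\|\vec{\epsilon}\|_{\mathcal{H}}^2)$. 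Comparing with \eqref{Edef} and expanding $|u(t,0)|^2=((1+\sigma)Q(z)+\epsilon(0))^2$ gives $\mathcal{E}=\mathcal{Q}+O(\mu\|\vec{\epsilon}\|_{\mathcal{H}}^2)+O(e^{-z}\|\vec{\epsilon}\|_{\mathcal{H}})+O(e^{-2z})$, hence, after Young's inequality,
\[
\mathcal{E}\le\big(E_{\gamma}(\vec{u})-(1+\sigma)J_0(Q)\big)+C\big(\mu+e^{-cz}+\|\vec{\epsilon}\|_{\mathcal{H}}\big)\|\vec{\epsilon}\|_{\mathcal{H}}^2+Ce^{-2z}.
\]
Since $\gamma<0$ we have $\frac\gamma2|u(t,0)|^2\le0$, so \eqref{enees1} (with $\sigma b\lesssim a_+^2+a_-^2+a_0^2$) yields $\|\vec{\epsilon}\|_{\mathcal{H}}^2\lesssim\mathcal{E}+a_+^2+a_-^2+a_0^2$; plugging this in, taking $\mu$ small, $z$ large and $\|\vec{\epsilon}\|_{\mathcal{H}}$ small enough that the $\mathcal{E}$-prefactor is $\le\frac12$, and absorbing, I obtain $\mathcal{E}\lesssim(E_{\gamma}(\vec{u})-(1+\sigma)J_0(Q))+a_+^2+a_-^2+a_0^2+e^{-2z}$. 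Adding $C(a_+^2+a_-^2+a_0^2+\sigma b)+e^{-2z}\lesssim a_+^2+a_-^2+a_0^2+e^{-2z}$ to both sides, using $\sigma b^2\ge0$, and using that $E_{\gamma}(\vec{u}(t))\ge(1+\sigma)J_0(Q)$ for all $t$ (as $E_{\gamma}$ is non-increasing with $E_{\gamma}(\vec{u}(t))\to(1+\sigma)J_0(Q)$), gives \eqref{Eupperbound}.

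The main obstacle is this last closing step: $\mathcal{E}$ differs from the bare Hessian $\mathcal{Q}$ by terms of size $\mu\|\vec{\epsilon}\|_{\mathcal{H}}^2$, and the expansion produces further errors $e^{-z}\|\vec{\epsilon}\|_{\mathcal{H}}$ and $\|\vec{\epsilon}\|_{\mathcal{H}}^3$, all of which must be reabsorbed via \eqref{enees1}; this forces $\mu$, $1/z_0$ and the admissible size of $\vec{\epsilon}$ to be chosen small \emph{after} the coercivity constant, uniformly in the solution, and it is precisely $\gamma<0$ that makes the inversion of \eqref{enees1} possible by letting us discard $\frac\gamma2|u(t,0)|^2\le0$. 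The $\sigma=1$ coercivity in \eqref{enees1} (the partition-of-unity estimate across the gap between the two wells, combined with the even symmetry) is routine but is the one place where care with the cutoff error terms is genuinely needed.
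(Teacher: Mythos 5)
Your proposal follows essentially the same route as the paper: identifying $\mathcal{E}+\frac{\gamma}{2}|u(t,0)|^2$ with the quadratic form $\frac12\langle\mathcal{L}_t\epsilon,\epsilon\rangle-\frac{\rho\mu}{2}\|\epsilon\|_{L^2}^2+\frac12\|\eta+\mu\epsilon\|_{L^2}^2$, localising with a partition of unity across the gap when $\sigma=1$ and applying Lemma \ref{Lproperty}, converting the scalar products into $a_\pm,a_0$ via the definitions and the orthogonality in \eqref{sa*}, and then, for \eqref{Eupperbound}, expanding $E_\gamma(\vec{u})$ around the multi-soliton and using $\gamma<0$ to drop $\frac{\gamma}{2}|u(t,0)|^2$ and invert \eqref{enees1}. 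The energy expansion, the absorption of the $O(\mu\|\vec{\epsilon}\|_{\mathcal{H}}^2)$, $O(e^{-z}\|\vec{\epsilon}\|_{\mathcal{H}})$ and higher-order errors, and the use of monotonicity of $E_\gamma$ to keep $E_\gamma(\vec{u})-(1+\sigma)J_0(Q)\geq0$ all match the paper's argument.

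One caveat: in the $\sigma=1$ coercivity step you invoke evenness of $\vec{\epsilon}$ to reduce $\langle\epsilon,\phi_-\rangle$ and $\langle\epsilon,\partial_xQ_-\rangle$ to the plus-side quantities and conclude $\sigma b\lesssim a_+^2+a_-^2+a_0^2$. Lemma \ref{enees} assumes only \eqref{sa*}, not evenness --- evenness enters only later, in Corollary \ref{e} --- and the term $\sigma b(t)$ sits on the left of \eqref{enees1} precisely so that no symmetry is needed: the minus-side scalar products produced by the localised coercivity are, by definition, exactly $b(t)^{1/2}$, so they are absorbed by the $\sigma b$ term directly. Your argument as written proves the lemma only for even solutions; the repair is immediate, but you should not conflate the general lemma with its even-symmetric corollary.
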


\begin{remark}
$C$ and $\tilde{C}$ are independent of $\mu>0$.
\end{remark}

In detail, see \cite[Lemma 2.6]{CMYZ}.
\begin{proof}
First we note that
\begin{align*}
\langle \epsilon,\phi_+\rangle=\frac{a_+-a_-}{\nu^+-\nu^-},\ \langle \epsilon,{\partial}_xQ_+\rangle=-\frac{a_0}{2\alpha}
\end{align*}
hold. Therefore we have
\begin{align}\label{asim}
\langle \epsilon,\phi_+\rangle^2+\langle \epsilon,{\partial}_xQ_+\rangle^2\lesssim a_+^2+a_-^2+a_0^2.
\end{align}
When $\sigma=0$, by Lemma \ref{Lproperty} and $\mu>0$ small enough, there exists $C>0$ large enough such that \eqref{enees1} is satisfied.

We consider $\sigma=1$. Let $\chi$ be a smooth function satisfying the following properties 
\begin{align*}
\chi=1\ \mbox{on}\ [0,1],\ \ \chi=0\ \mbox{on}\ [2,\infty),\ \ \chi^{\prime}\leq 0\ \mbox{on}\ \mathbb{R}.  
\end{align*}
For $\lambda=\frac{|z|}{2}\gg 1$, let 
\begin{align*}
\chi_+(x)=\chi\left(\frac{|x-z|}{\lambda}\right),\ \chi_-(x)=(1-\chi_+^2(x))^{\frac{1}{2}}.
\end{align*}
We define $\epsilon_{\pm}=\epsilon\cdot\chi_{\pm}$. Then we have
\begin{align}\label{chi}
\epsilon^2=\epsilon_+^2+\epsilon_-^2.
\end{align}
Therefore we have
\begin{align}
 \|\epsilon_+\|_{L^2}^2+\|\epsilon_-\|_{L^2}^2=\|\epsilon\|_{L^2}^2,\label{pmes1}
\\
\|{\partial}_x\epsilon\|_{L^2}^2=\|{\partial}_x\epsilon_+\|_{L^2}^2+\|{\partial}_x\epsilon_-\|_{L^2}^2+O\left(\frac{\|\epsilon\|_{H^1}^2}{|z|}\right),
\\
|\langle \epsilon-\epsilon_-, {\partial}_xQ_-\rangle|+|\langle \epsilon-\epsilon_-,\phi_-\rangle|\lesssim e^{-z}\|\epsilon\|_{L^2},\label{chies1}
\\
|\langle \epsilon-\epsilon_+,{\partial}_xQ_+\rangle|+|\langle \epsilon-\epsilon_+,\phi_+\rangle|\lesssim e^{-\frac{z}{2}}\|\epsilon\|_{L^2}. \label{chies2}
\end{align}
By Lemma \ref{Lproperty}, we have
\begin{align}
\langle \mathcal{L}_t\epsilon_+,\epsilon_+\rangle+\langle pQ_-^{p-1},\epsilon_+^2\rangle\geq c\|\epsilon_+\|_{H^1}^2-\frac{1}{c}\left(\langle \epsilon_+,\phi_+\rangle^2+\langle \epsilon_+,{\partial}_xQ_+\rangle^2\right),
\\
\langle \mathcal{L}_t\epsilon_-,\epsilon_-\rangle+\langle pQ_+^{p-1},\epsilon_-^2\rangle\geq c\|\epsilon_-\|_{H^1}^2-\frac{1}{c}\left(\langle \epsilon_-,\phi_-\rangle^2+\langle \epsilon_-,{\partial}_xQ_-\rangle^2\right).
\end{align}
Furthermore
\begin{align}\label{pmes2}
|\langle pQ_-^{p-1},\epsilon_+^2\rangle|+|\langle pQ_+^{p-1},\epsilon_-^2\rangle|\lesssim e^{-\frac{z}{2}}\|\epsilon\|_{L^2}^2
\end{align}
holds. We note that by \eqref{chi} we have 
\begin{align}\label{lchi}
\langle \mathcal{L}_t\epsilon,\epsilon\rangle=\langle \mathcal{L}_t\epsilon_+,\epsilon_+\rangle+\langle \mathcal{L}_t\epsilon_-,\epsilon_-\rangle+O\left(\frac{\|\epsilon\|_{H^1}^2}{|z|}\right).
\end{align}
Gathering \eqref{pmes1}-\eqref{lchi}, we obtain 
\begin{align*}\label{ltes1}
\langle\mathcal{L}_t\epsilon,\epsilon\rangle\geq \frac{c}{2}\|\epsilon\|_{H^1}^2-\frac{1}{c}\sum_{\ast=\pm}\left(\langle\epsilon,{\partial}_xQ_{\ast}\rangle^2+\langle\epsilon,\phi_{\ast}\rangle^2\right).
\end{align*}
Furthermore by  \eqref{asim} and $\mu>0$ small enough, there exists $C>0$ large enough such that \eqref{enees1} is satisfied.

Next we estimate \eqref{Eupperbound}. We denote $E=J_0(Q)$. By direct computation, we have
\begin{align*}
E_{\gamma}(\vec{u})&=(1+\sigma)E-\sigma \int Q_+^pQ_-+\frac{1}{2}\langle \mathcal{L}_t\epsilon,\epsilon\rangle+\frac{1}{2}\|\eta\|_{L^2}^2-\frac{\gamma}{2}|u(t,0)|^2
\\
&\quad-\int \left\{f(Q_++\sigma Q_-)-f(Q_+)-f(\sigma Q_-)\right\}\epsilon
\\
&\quad-\frac{1}{2}\int \left\{f^{\prime}(Q_++\sigma Q_-)-f^{\prime}(Q_+)-f^{\prime}(\sigma Q_-)\right\}\epsilon^2
\\
&\quad-\frac{1}{p+1}\int \left\{(Q_++\sigma Q_-)^{p+1}-Q_+^{p+1}-\sigma Q_-^{p+1}\right\}+O(\|\vec{\epsilon}\|_{\mathcal{H}}^3+e^{-2z}).
\end{align*}
We note that 
\begin{align*}
\left|\left\{f^{\prime}(Q_++\sigma Q_-)-f^{\prime}(Q_+)-f^{\prime}(\sigma Q_-)\right\}\epsilon^2\right|\lesssim \left(|Q_+^{p-2}Q_-|+|Q_+Q_-^{p-2}|\right)\epsilon^2 
\end{align*}
holds. Therefore we obtain
\begin{align}
E_{\gamma}(\vec{u})&=(1+\sigma)E+\frac{1}{2}\langle \mathcal{L}_t\epsilon,\epsilon\rangle+\frac{1}{2}\|\eta\|_{L^2}^2-\frac{\gamma}{2}|u(t,0)|^2+O(\|\vec{\epsilon}\|_{\mathcal{H}}^{\tau}+e^{-2z}),
\end{align}
where $\tau=\frac{4}{2-\min{(2(p-2),\frac{2}{3})} }>2$. By \eqref{Edef}, we have
\begin{align*}
E_{\gamma}(\vec{u})-(1+\sigma)E=\mathcal{E}+\frac{\mu}{2}\left\{(\rho-\mu)\|\epsilon\|_{L^2}^2-2\langle \epsilon,\eta\rangle\right\}+O(\|\vec{\epsilon}\|_{\mathcal{H}}^{\tau}+e^{-2z}).
\end{align*}
We recall that $\mu>0$ is small enough and that $\gamma$ is negative. By \eqref{enees1}, we have
\begin{align}\label{Enees5}
E_{\gamma}(\vec{u})-(1+\sigma)E&\geq \frac{1}{2}\mathcal{E}-\frac{C}{2}\left(a_+^2+a_-^2+a_0^2+\sigma b^2\right)-C^{\prime}e^{-2z}
\end{align}
for some $C^{\prime}>0$. By \eqref{Enees5}, we obtain \eqref{Eupperbound}.
\end{proof}

Now we assume that $\vec{u}$ satisfies \eqref{sa1} or \eqref{sa2}. Then $\vec{u}$ satisfies \eqref{sa*} for some $z(t)$ and $\sigma=0,1$. Furthermore, when $\vec{u}$ satisfies \eqref{sa2}, we have
\begin{align*}
\langle \epsilon,{\partial}_xQ_-\rangle=-\langle \epsilon,{\partial}_xQ_+\rangle,\ \langle\epsilon,\phi_+\rangle=\langle\epsilon,\phi_-\rangle,
\end{align*}
because $\epsilon$ is even. Thus we get the following result.

\begin{corollary}\label{e}
We assume that $\vec{u}$ satisfies \eqref{sa1} or \eqref{sa2}. Then there exists $C>0$ such that 
\begin{align}\label{enees2}
C\left\{a_+(t)^2+a_-(t)^2+a_0(t)^2\right\}+\mathcal{E}(t)+\frac{\gamma}{2}|u(t,0)|^2\gtrsim \|\vec{\epsilon}\|_{\mathcal{H}}^2.
\end{align}
Furthermore if $\gamma<0$, there exists $\tilde{C}>0$ large enough such that 
\begin{align*}
&\quad\mathcal{E}+C\left(a_+^2+a_-^2+a_0^2\right)+e^{-2z}
\\
&\leq\tilde{C}\left\{ (E_{\gamma}(\vec{u})-2J_0(Q))+(a_+^2+a_-^2+a_0^2)+e^{-2z}\right\}.
\end{align*}
\end{corollary}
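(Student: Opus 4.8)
The plan is to read off Corollary \ref{e} from Lemma \ref{enees} by taking $\sigma=0$ in the single solitary wave case \eqref{sa1} and $\sigma=1$ in the even $2$-solitary wave case \eqref{sa2}, the only additional input being the parity of $\epsilon$, which allows one to absorb the auxiliary quantity $b(t)$ into $a_+^2+a_-^2+a_0^2$.

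First I would invoke the reduction already made just before the statement: a global solution satisfying \eqref{sa1} (resp.\ \eqref{sa2}) satisfies \eqref{sa*} with $\sigma=0$ (resp.\ $\sigma=1$), so Lemma \ref{enees} applies with the corresponding value of $\sigma$. In the single solitary wave case $\sigma=0$ there is no $b$-term at all and $(1+\sigma)J_0(Q)=J_0(Q)$, so \eqref{enees1} is literally \eqref{enees2} and \eqref{Eupperbound} is the asserted second estimate (with $J_0(Q)$; the $2J_0(Q)$ form corresponds to the even $2$-solitary wave case). Hence the work is in the case \eqref{sa2}, where $\sigma=1$ and $\gamma\le-2<0$.

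In that case the key step is to bound $b(t)=\langle\epsilon,{\partial}_xQ_-\rangle^2+\langle\epsilon,\phi_-\rangle^2$ in terms of $a_+^2+a_-^2+a_0^2$. Since the solution in \eqref{sa2} is even, $\epsilon$ is even, so $\langle\epsilon,{\partial}_xQ_-\rangle=-\langle\epsilon,{\partial}_xQ_+\rangle$ and $\langle\epsilon,\phi_-\rangle=\langle\epsilon,\phi_+\rangle$; therefore $b(t)=\langle\epsilon,{\partial}_xQ_+\rangle^2+\langle\epsilon,\phi_+\rangle^2$, which by \eqref{asim} is $\lesssim a_+^2+a_-^2+a_0^2$. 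Feeding this into \eqref{enees1} gives \eqref{enees2}. For the second inequality I would additionally use the a priori bound $\|\vec{\epsilon}\|_{\mathcal{H}}\ll1$ from \eqref{sa*} together with $b\lesssim\|\vec{\epsilon}\|_{\mathcal{H}}^2$, so that $b^2\le b\lesssim a_+^2+a_-^2+a_0^2$; inserting these bounds and $(1+\sigma)J_0(Q)=2J_0(Q)$ into \eqref{Eupperbound} and discarding the nonnegative term $Cb$ remaining on the left yields the stated upper bound.

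Since essentially all of the analytic content already sits inside Lemma \ref{enees}, I do not expect a genuine obstacle here. The only point requiring care is the parity reduction of $b$ and the elementary fact that $b^2\lesssim b$ when $b$ is uniformly small, which is precisely where the evenness hypothesis on $\vec{u}$ is used.
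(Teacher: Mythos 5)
Your proposal is correct and follows exactly the route the paper takes: the corollary is read off from Lemma \ref{enees} with $\sigma=0$ for \eqref{sa1} and $\sigma=1$ for \eqref{sa2}, the only new ingredient being that evenness of $\epsilon$ gives $\langle\epsilon,\partial_xQ_-\rangle=-\langle\epsilon,\partial_xQ_+\rangle$ and $\langle\epsilon,\phi_-\rangle=\langle\epsilon,\phi_+\rangle$, so $b$ (and hence $b^2$, since $b\ll1$) is controlled by $a_+^2+a_-^2+a_0^2$ via \eqref{asim}. Your remark about $J_0(Q)$ versus $2J_0(Q)$ in the $\sigma=0$ case correctly identifies what is evidently a typo in the corollary's statement, which should read $(1+\sigma)J_0(Q)$ as in \eqref{Eupperbound} and \eqref{Ges2}.
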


Let $L$ be large enough. We define $\mathcal{G}$ as 
\begin{align}\label{Gdef}
\mathcal{G}(t)=\mathcal{E}(t)+L\left(a_-^2(t)+a_0^2(t)\right).
\end{align}
By Corollary \ref{e}, if $\gamma<0$ and $\vec{u}$ satisfies \eqref{sa1} or \eqref{sa2}, we have
\begin{align}\label{Ges2}
c_{\mathcal{G}}\|\vec{\epsilon}\|_{\mathcal{H}}^2\leq \mathcal{G}+La_+^2&\leq C_{\mathcal{G}}\left\{(E_{\gamma}(\vec{u})-(1+\sigma)J_0(Q))+a_+^2+a_-^2+a_0^2+e^{-2z}\right\},
\end{align}
for some constants $c_{\mathcal{G}}>0$ and $C_{\mathcal{G}}>0$. In addition $\mathcal{G}$ satisfies
\begin{align}\label{Ges3}
|\mathcal{G}|\lesssim \|\vec{\epsilon}\|_{\mathcal{H}}^2+e^{-2z}.
\end{align}
\begin{lemma}
Let $\gamma<0$. We assume that $\vec{u}$ satisfies \eqref{sa1} or \eqref{sa2}. Then there exist $L_1>0$ and $\delta>0$ such that 
\begin{align}\label{Ges}
\mathcal{G}^{\prime}\leq -\delta \mathcal{G}+L_1a_+^2+\frac{1}{\delta}\left(e^{-2z}+\|\vec{\epsilon}\|_{\mathcal{H}}^3\right).
\end{align}
\end{lemma}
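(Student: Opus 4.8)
The plan is to differentiate $\mathcal{G}=\mathcal{E}+L(a_-^2+a_0^2)$ and feed in the three differential identities already available: \eqref{Ees1} for $\mathcal{E}'$, and \eqref{apmes3}, \eqref{a0es2} for the discrete modes. The mechanism is that $\nu^-<0$ and $\alpha>0$, so $a_-^2$ and $a_0^2$ decay exponentially while only $a_+^2$ can grow; hence $\mathcal{G}$ should satisfy a favourable differential inequality modulo the isolated mode $a_+^2$ and the genuinely small errors $e^{-2z}$, $\|\vec{\epsilon}\|_{\mathcal{H}}^3$. Starting from the first line of \eqref{Ees1},
\[\mathcal{E}'=-2\mu\mathcal{E}-\mu\gamma c_Q^2(1+\sigma)^2e^{-2z}-\mu\gamma c_Q(1+\sigma)e^{-z}\epsilon(t,0)-(\rho-\mu)\|\eta+\mu\epsilon\|_{L^2}^2+O(e^{-3z}+\|\vec{\epsilon}\|_{\mathcal{H}}^3),\]
one notes that for $\mu<\alpha$ we have $\rho-\mu=2\alpha-2\mu>0$, so $-(\rho-\mu)\|\eta+\mu\epsilon\|_{L^2}^2\le 0$, and that $\gamma<0$ makes $-\mu\gamma c_Q^2(1+\sigma)^2e^{-2z}$ a term of size $\lesssim e^{-2z}$. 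The only delicate term is $-\mu\gamma c_Q(1+\sigma)e^{-z}\epsilon(t,0)$: by the one-dimensional Sobolev bound $|\epsilon(t,0)|\lesssim\|\epsilon\|_{H^1}\le\|\vec{\epsilon}\|_{\mathcal{H}}$ and Young's inequality with a small parameter $\kappa>0$ it is bounded by $\kappa\mu\|\vec{\epsilon}\|_{\mathcal{H}}^2+C_\kappa e^{-2z}$. Thus $\mathcal{E}'\le-2\mu\mathcal{E}+\kappa\mu\|\vec{\epsilon}\|_{\mathcal{H}}^2+C(e^{-2z}+\|\vec{\epsilon}\|_{\mathcal{H}}^3)$.

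Next I would combine this with \eqref{apmes3} (used with $\nu^-<0$) and \eqref{a0es2}, bounding the right-hand sides there by $CL(e^{-2z}+\|\vec{\epsilon}\|_{\mathcal{H}}^3)$ and $CL(e^{-3z}+\|\vec{\epsilon}\|_{\mathcal{H}}^3)$ after using $|a_\pm|\lesssim\|\vec{\epsilon}\|_{\mathcal{H}}\lesssim 1$ (the quantity $\|\vec{\epsilon}\|_{\mathcal{H}}$ is small along the trajectory by \eqref{sa*} and Theorem \ref{ge}). Writing $-2\mu\mathcal{E}=-2\mu\mathcal{G}+2\mu L(a_-^2+a_0^2)$, the coefficient of $a_-^2$ in $\mathcal{G}'$ becomes $2L(\mu+\nu^-)$ and that of $a_0^2$ becomes $2L(\mu-2\alpha)$, both $\le 0$ once $\mu$ is small enough (since $\nu^-<0$ and $\alpha>0$); discarding these two negative contributions yields
\[\mathcal{G}'\le-2\mu\mathcal{G}+\kappa\mu\|\vec{\epsilon}\|_{\mathcal{H}}^2+C(e^{-2z}+\|\vec{\epsilon}\|_{\mathcal{H}}^3).\]

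Finally I would invoke the coercivity \eqref{Ges2}, $\|\vec{\epsilon}\|_{\mathcal{H}}^2\le c_{\mathcal{G}}^{-1}(\mathcal{G}+La_+^2)$, to replace $\kappa\mu\|\vec{\epsilon}\|_{\mathcal{H}}^2$ by $(\kappa\mu/c_{\mathcal{G}})\mathcal{G}+(\kappa\mu L/c_{\mathcal{G}})a_+^2$. Choosing $\kappa$ small enough (depending only on $c_{\mathcal{G}}$) that $2-\kappa/c_{\mathcal{G}}>1$, then $\mu$ small, and setting $\delta=(2-\kappa/c_{\mathcal{G}})\mu$ and $L_1=\kappa\mu L/c_{\mathcal{G}}$, one obtains $\mathcal{G}'\le-\delta\mathcal{G}+L_1a_+^2+C'(e^{-2z}+\|\vec{\epsilon}\|_{\mathcal{H}}^3)$; shrinking $\mu$ once more so that $1/\delta\ge C'$ gives \eqref{Ges}. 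One should observe that $\delta$ must be taken \emph{equal} to $(2-\kappa/c_{\mathcal{G}})\mu$, not merely at most this, since $\mathcal{G}$ need not have a sign and $-\mu\mathcal{G}$ cannot be replaced by $-\delta\mathcal{G}$ with $\delta<\mu$ when $\mathcal{G}<0$.

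The main difficulty is purely bookkeeping the order in which constants are fixed: $L$ large (from \eqref{enees1}), then $c_{\mathcal{G}}$ (the coercivity constant from Lemma \ref{enees}, which is crucially independent of $\mu$), then the Young parameter $\kappa$, then $\mu$ small — also small enough that $\mu+\nu^-<0$ and $\mu<2\alpha$. The one error term that is not manifestly $O(e^{-2z}+\|\vec{\epsilon}\|_{\mathcal{H}}^3)$, namely $\mu e^{-z}\epsilon(t,0)$, is only $O(\mu\|\vec{\epsilon}\|_{\mathcal{H}}^2)$, and it is essential that its coefficient be small relative to the coercive gain $2\mu\mathcal{G}$; this is precisely why $\kappa$ is introduced and chosen before $\mu$, and why one needs $c_{\mathcal{G}}$ (hence $C$ and $\tilde{C}$ in Lemma \ref{enees}) to be independent of $\mu$.
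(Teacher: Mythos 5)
Your proposal is correct and follows essentially the same route as the paper: differentiate $\mathcal{G}$, insert \eqref{Ees1}, \eqref{apmes3}, \eqref{a0es2}, discard the damping term $-(\rho-\mu)\|\eta+\mu\epsilon\|_{L^2}^2$ and the negative coefficients of $a_-^2$, $a_0^2$, and absorb the delta-potential cross term $-\mu\gamma c_Q(1+\sigma)e^{-z}\epsilon(t,0)$ by Young's inequality followed by the $\mu$-independent coercivity of Lemma \ref{enees} (the paper absorbs $\hat{\delta}|\epsilon(t,0)|^2$ via \eqref{enees2} rather than via \eqref{Ges2}, which is the same mechanism). Your bookkeeping of the order in which $L$, $\kappa$, $\mu$ are fixed matches the paper's choice of $\hat{\delta}$, so there is nothing further to add.
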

\begin{proof}
By \eqref{apmes3}, \eqref{a0es2}, \eqref{Ees1} and \eqref{Gdef}, we have
\begin{align*}
\mathcal{G}^{\prime}&=-2\mu\mathcal{E}-\mu \gamma c_Q(1+\sigma)e^{-z}\epsilon(t,0)-(\rho-\mu)\|\eta+\mu\epsilon\|_{L^2}^2
\\
&\quad+2L\nu^-a_-^2-4\alpha La_0^2+O(e^{-2z}+\|\vec{\epsilon}\|_{\mathcal{H}}^3).
\end{align*}
Now we choose $\hat{\delta}>0$ small so that 
\begin{align*}
2(\mu-\hat{\delta})\mathcal{E}(t)+2\mu L\left(a_-^2(t)+a_+^2(t)+a_0^2(t)\right)+\frac{\gamma}{2}|u(t,0)|^2\geq \hat{\delta}|\epsilon(t,0)|^2,
\\
\nu^-+\mu<-\hat{\delta}, \ -2\alpha+\mu<-\hat{\delta}.
\end{align*}
We note that the above estimate holds by \eqref{enees2}. Then there exists $C>0$ such that
\begin{align*}
\mathcal{G}^{\prime}&\leq -2\mu \mathcal{E}+\hat{\delta}|\epsilon(t,0)|^2+2L\nu^-a_-^2-4\alpha La_0^2+C\left(e^{-2z}+\|\vec{\epsilon}\|_{\mathcal{H}}^3\right)
\\
&\leq -2\hat{\delta}\mathcal{G}+2\mu La_+^2+C\left(e^{-2z}+\|\vec{\epsilon}\|_{\mathcal{H}}^3\right).
\end{align*}
Therefore by choosing $L_1=2\mu L$ and  $\delta=\min{(2\hat{\delta}, \frac{1}{C})}$, we obtain \eqref{Ges}.

\end{proof}

\subsection{Estimate of the instability term}

In this subsection, we estimate the instability term $a_+$.

\begin{proposition}\label{instaes}
Let $\gamma<0$. We assume that $\vec{u}$ satisfies \eqref{sa1} or \eqref{sa2}. Then for any $M>0$, there exists $T_M>0$ such that for all $t\geq T_M$
\begin{align}\label{apes1}
|a_+(t)|\leq \frac{1}{M}\left(\mathcal{G}+e^{-2z}\right)^{\frac{1}{2}}.
\end{align}
\end{proposition}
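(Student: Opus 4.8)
The plan is to integrate the scalar ODE for $a_{+}$ backward from $+\infty$, exploiting that $\nu^{+}>0$ makes $a_{+}$ the unstable mode while $\vec{\epsilon}(t)\to 0$ (from \eqref{sa*}) forces $a_{+}(t)\to 0$. By \eqref{apmes}, $\frac{d}{dt}a_{+}=\nu^{+}a_{+}+g$ with $|g(t)|\lesssim e^{-2z(t)}+\|\vec{\epsilon}(t)\|_{\mathcal{H}}^{2}$; since $\nu^{+}>0$ and $a_{+}(s)\to 0$, the quantity $e^{-\nu^{+}s}a_{+}(s)$ tends to $0$, so integrating $\frac{d}{ds}(e^{-\nu^{+}s}a_{+})$ on $[t,\infty)$ gives
\[
a_{+}(t)=-\int_{t}^{\infty}e^{-\nu^{+}(s-t)}g(s)\,ds ,\qquad |a_{+}(t)|\le C\int_{t}^{\infty}e^{-\nu^{+}(s-t)}\bigl(e^{-2z(s)}+\|\vec{\epsilon}(s)\|_{\mathcal{H}}^{2}\bigr)\,ds .
\]
Writing $\mathcal{N}=\mathcal{G}+e^{-2z}$, the coercivity \eqref{Ges2} yields $e^{-2z}+\|\vec{\epsilon}\|_{\mathcal{H}}^{2}\lesssim \mathcal{N}+a_{+}^{2}$ (distinguishing the sign of $\mathcal{G}$), so the integrand is $\lesssim \mathcal{N}(s)+a_{+}(s)^{2}$.

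The next step is to turn this into a pointwise bound. Since $\mathcal{N}(s)^{+}\lesssim \|\vec{\epsilon}(s)\|_{\mathcal{H}}^{2}+e^{-2z(s)}\to 0$ (by \eqref{Ges3}) and $e^{-2z}$ is slowly varying (by \eqref{zpes1}), the $\mathcal{N}$-part of the integral is $\lesssim \beta(t):=\sup_{s\ge t}\bigl(\|\vec{\epsilon}(s)\|_{\mathcal{H}}^{2}+e^{-2z(s)}\bigr)$, whereas $\int_{t}^{\infty}e^{-\nu^{+}(s-t)}a_{+}(s)^{2}\,ds\lesssim\bigl(\sup_{s\ge t}|a_{+}(s)|\bigr)^{2}$ is absorbed on the left after taking $\sup_{s\ge t}$, because $a_{+}(s)\to 0$; hence $|a_{+}(t)|\lesssim\beta(t)$ for $t$ large. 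To upgrade $\beta(t)$ to $\mathcal{N}(t)$, I would feed this into the Lyapunov inequality \eqref{Ges}: a Gronwall argument for \eqref{Ges} on $[t,\infty)$, using \eqref{zpes1}, \eqref{Ges3}, and the bound just obtained (which renders the feedback term $L_{1}a_{+}^{2}$ of size $\beta^{2}$), gives $\sup_{s\ge t}\mathcal{G}(s)\lesssim|\mathcal{G}(t)|+e^{-2z(t)}+\beta(t)^{2}$, whence $\beta(t)\lesssim\mathcal{N}(t)^{+}+\beta(t)^{2}$ and therefore $\beta(t)\lesssim\mathcal{N}(t)$ for $t$ large (in particular $\mathcal{G}(t)+e^{-2z(t)}>0$ eventually). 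Combining, $|a_{+}(t)|\le C_{\ast}\bigl(\mathcal{G}(t)+e^{-2z(t)}\bigr)$ for all $t\ge T_{0}$.

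It then remains to trade one power: since $\mathcal{G}(t)+e^{-2z(t)}\to 0$, given $M>0$ I choose $T_{M}\ge T_{0}$ with $\bigl(\mathcal{G}(t)+e^{-2z(t)}\bigr)^{1/2}\le(C_{\ast}M)^{-1}$ for $t\ge T_{M}$, and then $|a_{+}(t)|\le C_{\ast}\bigl(\mathcal{G}+e^{-2z}\bigr)=C_{\ast}\bigl(\mathcal{G}+e^{-2z}\bigr)^{1/2}\bigl(\mathcal{G}+e^{-2z}\bigr)^{1/2}\le\frac{1}{M}\bigl(\mathcal{G}+e^{-2z}\bigr)^{1/2}$, which is \eqref{apes1}. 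The delicate point is the upgrade step: the backward Duhamel integral produces $\beta(t)=\sup_{s\ge t}(\cdots)$, a quantity reading into the future of the trajectory, while \eqref{apes1} must be pointwise in $t$; reconciling them needs an almost-monotonicity of $\mathcal{G}+e^{-2z}$ that is not transparent since the forcing $\tfrac1\delta e^{-2z}$ in \eqref{Ges} does not decay exponentially. One must use that $e^{-2z}$ is non-increasing up to lower-order terms and that, once $a_{+}$ is crudely controlled, the term $L_{1}a_{+}^{2}$ no longer fuels the growth of $\mathcal{G}+e^{-2z}$; a forward-in-time exit-time argument against the unstable mode (showing the trajectory cannot leave $\{a_{+}^{2}\le M^{-2}(\mathcal{G}+e^{-2z})\}$) is an alternative way to organize the same estimates.
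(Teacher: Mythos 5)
Your first step is sound: since $\nu^{+}>0$ and $a_{+}(t)\to 0$, the backward Duhamel representation $a_{+}(t)=-\int_{t}^{\infty}e^{-\nu^{+}(s-t)}g(s)\,ds$ is legitimate and immediately gives $|a_{+}(t)|\lesssim\beta(t):=\sup_{s\ge t}\bigl(\|\vec{\epsilon}(s)\|_{\mathcal{H}}^{2}+e^{-2z(s)}\bigr)$. The genuine gap is exactly where you flag it, and the remarks you offer do not close it. To convert $\beta(t)$ into $\mathcal{G}(t)+e^{-2z(t)}$ you need $\sup_{s\ge t}e^{-2z(s)}\lesssim e^{-2z(t)}$, i.e.\ $z(s)\ge z(t)-O(1)$ for all $s\ge t$. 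But at this stage the only information on $z'$ is \eqref{zes}--\eqref{zpes1}: the sign-definite part of $z'$ is of size $e^{-2z}$, while the error contains $-\gamma c_{Q}e^{-z}\epsilon(t,0)$ and $R_{2}=O(\|\vec{\epsilon}\|_{\mathcal{H}}^{2}+e^{-\theta z})$, which dominate and may be negative whenever $\|\vec{\epsilon}\|_{\mathcal{H}}\gg e^{-z}$. Excluding that regime amounts to proving $\|\vec{\epsilon}\|_{\mathcal{H}}^{2}\lesssim e^{-2z}$, which in the paper is obtained only \emph{after} \eqref{apes1} (via \eqref{Ges6}--\eqref{Ges7}). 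The same circularity enters your Gronwall step: bounding $\int_{t}^{s}e^{-\delta(s-\tau)}e^{-2z(\tau)}d\tau$ by $Ce^{-2z(t)}$ already presupposes the forward almost-monotonicity of $e^{-2z}$. So the claimed chain $\beta(t)\lesssim\mathcal{G}(t)+e^{-2z(t)}+\beta(t)^{2}$ is not actually derived, and the proof as written is incomplete.

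The paper's proof is precisely the ``forward exit-time argument'' you mention only in your last sentence, and it is built to avoid this circularity: one assumes $|a_{+}(T_{1})|>\frac{1}{M}(\mathcal{G}+e^{-2z})^{1/2}$ and bootstraps $|a_{+}|\ge\frac{1}{M_{1}}(\mathcal{G}+e^{-2z})^{1/2}$. Under that hypothesis every slowly varying quantity is dominated by the unstable mode, $|\frac{d}{dt}e^{-2z}|\le a_{+}^{2}$ and $\frac{d}{dt}(a_{-}^{2}+a_{0}^{2})\le a_{+}^{2}$, and the functional $\mathcal{I}=M_{1}^{2}a_{+}^{2}-C_{\mathcal{G}}\bigl(E_{\gamma}(\vec{u})-2J_{0}(Q)+a_{+}^{2}+a_{-}^{2}+a_{0}^{2}+e^{-2z}\bigr)-e^{-2z}$ is strictly increasing because the only genuinely monotone quantity available, the energy $E_{\gamma}(\vec{u})$ from \eqref{energydecay}, is used in place of any monotonicity of $\mathcal{G}$ or $e^{-2z}$. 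This traps the trajectory in the bootstrap regime, forces $a_{+}^{2}$ to grow exponentially, and contradicts $a_{+}\to 0$. To salvage your backward-Duhamel framework you would still have to run such a bootstrap to get the forward-in-time comparison; without it, the proposal does not prove the proposition.
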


\begin{proof}
We fix $M>0$. We may assume that there exist $0<\delta_M\ll 1$ and $T_M>0$ such that for $t\geq T_M$ 
\begin{align}\label{smalles}
e^{-z(t)}+|\mathcal{G}(t)|+|a_+(t)|&<\delta_M.
\end{align}
In addition we assume that there exists $T_1>T_M$ satisfying
\begin{align*}
\frac{1}{M}\left(\mathcal{G}(T_1)+e^{-2z(T_1)}\right)^{\frac{1}{2}}<|a_+(T_1)|.
\end{align*}
We introduce the following bootstrap estimate
\begin{align}\label{bs1}
\frac{1}{M_1}\left(\mathcal{G}+e^{-2z}\right)^{\frac{1}{2}}\leq |a_+|\leq \delta_M,
\end{align}
where $M_1$ satisfies
\begin{align*}\label{M_1es}
(M_1^2-C_{\mathcal{G}})\nu^+-2C_{\mathcal{G}}-1>0,\ M_1>M.
\end{align*}
We define $T_2\geq T_1$ as 
\begin{align*}
T_2=\sup{\{t\in [T_1,\infty)\ \mbox{such}\ \mbox{that}\ \mbox{holds}\ \eqref{bs1}\ \mbox{on}\ [T_1,t]\}}.
\end{align*}
Since $\delta_M$ is sufficiently small, $a_+$ satisfies for $t\geq T_M$
\begin{align}
\frac{d}{dt}|a_+|^2\geq \nu^+|a_+|^2.
\end{align}
Furthermore by \eqref{zpes1}, \eqref{apmes3}, and \eqref{a0es2} there exists $C_1>0$ such that 
\begin{align*}
\frac{d}{dt}a_0^2\leq C_1\left(\|\vec{\epsilon}\|_{\mathcal{H}}^3+e^{-3z}\right),
\\
\frac{d}{dt}a_-^2\leq C_1\left(\|\vec{\epsilon}\|_{\mathcal{H}}^3+e^{-3z}\right),
\\
|z^{\prime}|\leq C_1\left(\|\vec{\epsilon}\|_{\mathcal{H}}^2+e^{-2z}\right).
\end{align*}
Since $\delta_M$ is small enough, we have
\begin{align}
\frac{d}{dt}\left\{(a_0)^2+(a_-^2)\right\}\leq a_+^2, \label{ama0es}
\\
\left|\frac{d}{dt}e^{-2z}\right|\leq a_+^2. \label{zesap}
\end{align}
We define 
\begin{align*}
\mathcal{I}=M_1^2a_+^2-C_{\mathcal{G}}\left(E(\vec{u})-2J_0(Q)+a_+^2+a_-^2+a_0^2+e^{-2z}\right)-e^{-2z}.
\end{align*}
Then by \eqref{energydecay}, \eqref{ama0es}, and \eqref{zesap}  we have
\begin{align*}
\mathcal{I}^{\prime}\geq \left\{(M_1^2-C_{\mathcal{G}})\nu^+-2C_{\mathcal{G}}-1\right\}a_+^2>0.
\end{align*}
Therefore as long as \eqref{bs1} holds, we have 
\begin{align*}
e^{-2z}+C_{\mathcal{G}}\left(E(\vec{u})-2J_0(Q)+a_+^2+a_-^2+a_0^2+e^{-2z}\right)<M_1^2a_+^2.
\end{align*}
Thus by \eqref{Ges2}, we obtain
\begin{align}\label{bs2}
\mathcal{G}+e^{-2z}<M_1^2a_+^2.
\end{align}
This means that as long as \eqref{bs1} holds, we have \eqref{bs2}. Therefore for $T_1\leq t\leq T_2$, \eqref{bs2} holds and $a_+^2$ increases exponentially. Thus $a_+(T_3)=\delta_M$ holds for some $T_3>0$, which contradicts \eqref{smalles}. Therefore for all $t\geq T_M$, 
\begin{align*}
|a_+|\leq \frac{1}{M}\left(\mathcal{G}+e^{-2z}\right)^{\frac{1}{2}}.
\end{align*}
\end{proof}
We note that since $\|\vec{\epsilon}(0)\|_{\mathcal{H}}$ is small enough we can replace $T_M$ with $0$ in Proposition \ref{instaes}.

\subsection{Proof of Theorem \ref{centerdistance}}
In this subsection we prove Theorem \ref{centerdistance}. First we organize some estimates of $\mathcal{G}$. Gathering \eqref{Ges2}-\eqref{apes1}, there exist $C>0$ and $\delta>0$ satisfying
\begin{align}
\mathcal{G}^{\prime}\leq -\delta\mathcal{G}+Ce^{-2z},\label{Ges5}
\\
\mathcal{G}+e^{-2z}\sim \|\vec{\epsilon}\|_{\mathcal{H}}^2+e^{-2z}.\label{Ges6}
\end{align}
Therefore by \eqref{zpes1}, there exists $\hat{C}>0$ such that 
\begin{align}\label{zesfinal}
z^{\prime}\leq \hat{C}\left(\mathcal{G}+e^{-2z}\right).
\end{align}

Before we prove Theorem \ref{centerdistance}, we prove Proposition \ref{ia}.

\begin{proof}[Proof of Proposition \ref{ia}]
We define $w(t)=e^{2z(t)}$. Then by \eqref{zesfinal} we have
\begin{align}
w^{\prime}\leq 2\hat{C}\mathcal{G}w+2\hat{C}.
\end{align}
Since $\lim_{t\to \infty} w(t)=\infty$ and $\lim_{t\to \infty}\mathcal{G}(t)=0$ hold, there exist $\hat{\delta}>0$ small enough and $T>0$ such that for $t\geq T$,
\begin{align*}
w^{\prime}\leq \hat{\delta}w.
\end{align*}
Therefore  we have for $t\geq s\geq T$,
\begin{align}\label{wes2}
w(t)\leq e^{\hat{\delta}(t-s)}w(s).
\end{align}
By \eqref{Ges5} and \eqref{wes2}, we have for $t>T$
\begin{align}
\mathcal{G}(t)&\leq e^{-\delta(t-T)}+C\int_T^t e^{\delta(s-t)}\cdot \frac{1}{w(s)}ds 
\\
&\leq e^{-\delta(t-T)}+\frac{C}{w(t)}\int_T^t e^{(\delta-\hat{\delta})(s-t)}ds
\\
&\leq e^{-\delta(t-T)}+\frac{C}{\delta-\hat{\delta}}\cdot \frac{1}{w(t)}. \label{Ges7}
\end{align}
Gathering these arguments, we have
\begin{align}\label{wes3}
w^{\prime}\lesssim 1.
\end{align}
Integrating \eqref{wes3} on $[0,t]$, there exists $C_{\star}>0$ such that for all $t>0$, 
\begin{align*}
w(t)\leq C_{\star}(t+1), 
\end{align*}
which implies 
\begin{align}\label{thm13}
z(t)-\frac{1}{2}\log{t}\lesssim 1.
\end{align}
\end{proof}

\begin{proof}[Proof of Theorem \ref{centerdistance}]
By Proposition \ref{ia} and \eqref{invariance}, we complete the proof.

\end{proof}

\begin{remark}
By \eqref{Ges7}, $\mathcal{G}\lesssim e^{-2z}$ holds. Furthermore by \eqref{apmes}, \eqref{a0es}, and \eqref{Ges6}, there exists $C^{\prime}>0$ such that 
\begin{align}
\left|\frac{d}{dt}a_--\nu^-a_-\right|&\leq C^{\prime}e^{-2z}, \label{a-esref}
\\
\left|\frac{d}{dt}a_0+2\alpha a_0\right|&\leq C^{\prime}e^{-2z}. \label{a0esref}
\end{align}
By \eqref{a-esref}, we have
\begin{align}\label{a-esref2}
\left|\frac{d}{dt}\left(e^{-\nu^-t}a_-\right) \right|\leq C^{\prime}e^{-\nu^-t-2z}.
\end{align}
Integrating \eqref{a-esref2} on $(T,t)$, we obtain
\begin{align*}
\left|e^{-\nu^-t}a_-(t)-e^{-\nu^-T}a_-(T)\right|&\leq C^{\prime}\int_T^t  e^{-\nu^-s-2z(s)}ds
\\
&=C^{\prime}e^{-2z(t)}\int_T^t  e^{-\nu^-s}\cdot e^{-\hat{\delta}(s-t)}ds
\\
&\leq \frac{C^{\prime}}{-\nu^--\hat{\delta}}e^{-\nu^-t-2z(t)},
\end{align*}
since \eqref{wes2} holds. Hence we have $|a_-|\lesssim e^{-2z}$. By the same argument, we obtain $|a_0|\lesssim e^{-2z}$. Furthermore by $|a_-|+|a_0|\lesssim e^{-2z}$, \eqref{enees2}, \eqref{apes1}, and \eqref{Ges6}, we have
\begin{align*}
\mathcal{E}+\frac{\gamma}{2}|u(t,0)|^2\gtrsim \|\vec{\epsilon}\|_{\mathcal{H}}^2.
\end{align*}
Now we estimate $\mathcal{E}$. When $|u(t,0)|^2\ll e^{-2z}$, $\|\vec{\epsilon}\|_{\mathcal{H}}^2\gtrsim e^{-2z}$ holds and therefore $\mathcal{E}\gtrsim e^{-2z}$ holds. On the other hand, when $|u(t,0)|^2\gtrsim e^{-2z}$ holds, $\mathcal{E}$ satisfies $\mathcal{E}\gtrsim \|\vec{\epsilon}\|_{\mathcal{H}}^2+|u(t,0)|^2\gtrsim e^{-2z}$ since $\gamma<0$. Thus we obtain
\begin{align*}
\mathcal{E}\sim e^{-2z}.
\end{align*}
If we get the sharp estimate of $u(t,0)$, we may improve the estimate of $\|\vec{\epsilon}\|_{\mathcal{H}}$. However it is difficult to get a sharp estimate of $u(t,0)$ since a delta potential is delicate.

\end{remark}

\section*{Acknowledgement}

The author wishes to thank Kenji Nakanishi for many helpful comments, discussions and encouragement on the present paper. The author also would like to thank Nobu Kishimoto, Masaya Maeda and Takahisa Inui for their helpful discussions.

\end{document}